\numberwithin{equation}{section}
\newcommand{\hcm}[1]{\hspace{#1 cm}}  
\newtheorem{theo}{Theorem}[section]
\newtheorem{prop}[theo]{Proposition}
\newtheorem{lem}[theo]{Lemma}
\newtheorem{rem}[theo]{Remark}
\newtheorem{hyp}{Assumption}
\newcommand{\bD}{\mathbb{D}}
\newcommand{\bE}{\mathbb{E}}
\newcommand{\PR}{\mathbb{P}}
\newcommand{\RR}{\mathbb{R}}
\newcommand{\bX}{\mathbf{\mathbb{X}}}
\newcommand{\bY}{\mathbf{\mathbb{Y}}}
\newcommand{\cA}{\mathcal{A}}
\newcommand{\cB}{\mathcal{B}}
\newcommand{\cC}{\mathcal{C}}
\newcommand{\sD}{\mathcal{D}}
\newcommand{\cE}{\mathcal{E}}
\newcommand{\cF}{\mathcal{F}}
\newcommand{\cG}{\mathcal{G}}
\newcommand{\cH}{\mathcal{H}}
\newcommand{\cI}{\mathcal{I}}
\newcommand{\cL}{\mathcal{L}}
\newcommand{\cM}{\mathcal{M}}
\newcommand{\cR}{\mathcal{R}}
\newcommand{\cS}{\mathcal{S}}
\newcommand{\cX}{\mathcal{X}}
\newcommand{\cY}{\mathcal{Y}}
\newcommand{\mfA}{ \mathfrak{A}}
\newcommand{\mfD}{ \overline{\Delta}^N_D} 
\newcommand{\mfE}{ \mathfrak{E}}
\newcommand{\mfF}{ \mathfrak{F}}
\newcommand{\mfL}{ \mathfrak{L}}
\newcommand{\mfM}{ \mathfrak{M}}
\newcommand{\mfU}{ \mathfrak{U}}
\newcommand{\mfV}{ \mathfrak{V}}
\newcommand{\eps}{\varepsilon}
\newcommand{\Rd}{\mathrm{d}}
\newcommand{\Var}{\text{\rm Var}}
\newcommand{\II}[1]{[\![#1]\!]}
\newcommand{\Ninf}[1]{\| #1 \|_\infty}
\newcommand{\idc}[1]{\mathbbm{1}_{\left \lbrace #1 \right \rbrace}}
\newcommand{\wtd}[1]{\widetilde{#1}}
\newcommand{\wht}[1]{\widehat{#1}}
\newcommand{\ttl}{\large Stochastic heterogeneous SIR model with infection-age     dependent infectivity 
 on large random graphs}
\begin{document}

\title[]{\ttl}

\author[Guodong \ Pang]{Guodong Pang}
\address{Department of Computational Applied Mathematics and Operations Research,
George R. Brown School of Engineering and Computing, 
Rice University,
Houston, TX 77005}
\email{gdpang@rice.edu}

\author[{\'E}tienne \ Pardoux]{{\'E}tienne Pardoux}
\address{Aix--Marseille Universit{\'e}, CNRS, Centrale Marseille, I2M, UMR \ 7373 \ 13453\ Marseille, France}
\email{etienne.pardoux@univ-amu.fr}

\author[Aur\'{e}lien \ Velleret]{Aur\'{e}lien Velleret} 
\address{Université Paris-Saclay, Université d’Evry Val d’Essonne, CNRS, LaMME, \ UMR 8071
	\ 91037 Evry, France} 
\email{aurelien.velleret@nsup.org}

\date{\today}

\begin{abstract} 
We study an individual-based stochastic SIR epidemic model with infection-age dependent infectivity on a large random graph, capturing individual heterogeneity and non-homogeneous connectivity.
Each individual is associated with particular characteristics (for example, spatial location and age structure), which may not be i.i.d., and is represented by a particular node.
The connectivities among the individuals are given by a non-homogeneous random graph, whose connecting probabilities may depend on the individual characteristics of the edge.  To each
 individual is associated  a random  infectivity function  of its infection age, which is allowed to depend upon the individual characteristics. 
We use measure-valued processes to describe the epidemic evolution dynamics, tracking the infection age of all individuals, and their associated characteristics.  
We consider the epidemic dynamics as the population size grows to infinity 
under a specific scaling of the connectivity graph
related to the convergence to a graphon. 
In the limit, we obtain a system of measure-valued equations, which can be also represented as a 
PDE model on graphon, and reflects the heterogeneities in individual
characteristics and social connectivity. 
\end{abstract}

\keywords{Individual-based stochastic epidemic models, SIR, infection-age dependent infectivity, individual heterogeneity, large non-homogeneous random graph, graphon, measure-valued processes, PDE model on graphon}

\maketitle

\allowdisplaybreaks

\section{Introduction}

In the present paper, we are interested in the large population limit of the stochastic  model of an epidemic that spreads
among an heterogeneous population with random connectivities. Such models have been studied to an extent in the literature.  For example, in \cite{BBT2020}, an epidemic model with an age structure and various social activity levels is studied to understand the effect of population heterogeneity on herd immunity. 
In addition to `age' as an obvious heterogeneous factor, spatial locations and other characteristics/features may also  contribute to the individual heterogeneity. 
Another source of heterogeneity arises from the individual connectivities (such as households, workplaces, communities and social activities), which is often modeled as a non-homogeneous random graph. 
For example, epidemic models on 
random graphs with given degrees,
typically under the configuration model 
are studied in \cite{andersson1998,volz2008sir,shang2013mixed,BP2012,BS2013,janson2017,ferreyra2021,lashari2021,luczak2024note,clancy2025},
on weighted  (configuration) graphs \cite{BDL2011,deijfen2011,BL2012,SB2019}, on dynamic (evolving) graphs 
\cite{BBLS2019,BB2022,jiang2019,DY2022,CHY25,FMO25,HR2024}. 
(See also the relevant models  on random networks with household structures in \cite{BST2009,BS2012,BS2018,kubasch2023}, and
on multilayer networks in \cite{jacobsen2018,kubasch2023}.)

Much relevant to our work are the large population limits for epidemic models on random graphs.  
In particular, \cite{DDMT2012,JLW2014} proved a functional law of large numbers (FLLN) for a Markovian SIR model on a configuration model graph with specified degree distributions and edges being randomly matched, and established the measure-valued limit as a system of nonlinear differential equations, which verifies the conjecture in \cite{volz2008sir}. 
In \cite{khudabukhsh2022}, a functional central limit theorem (FCLT) is established for a similar Markovian SI model for the total count processes. 
In \cite{BHI2024}, an FLLN is established for a Markovian SIR model on a stochastic block model. 
In \cite{DDMT2012,JLW2014, khudabukhsh2022, BHI2024},
the degree distribution converges to a finite limit as the number of nodes tends to infinity
and the number of edges is thus of the same order as the number $n$ of nodes.
Such a description with very sparse graphs 
leads to limits that are of distinct nature
from the one of dynamics on a graphon. 
On the other hand, for dense connectivities among individuals, one can derive large population approximations of the epidemic dynamics as PDEs on graphons. 
In \cite{KHT2022}, an FLLN is established for density-dependent Markov processes with finite state space on large random graphs, which includes the Markovian SIS model 
with individual heterogeneity on graphs as a special case, and a PDE on graphon is derived as the limit. 
In \cite{DFG+24}, an FLLN is established for a Markovian SIS model with a general form of individual heterogeneity on random graphs, where a PDE on graphon for the measure-valued state descriptors is also derived. 
In the latter two  references (\cite{KHT2022, DFG+24})
	the scalings of the parameters allow for the random generation of graphs 
whose number of edges is of order $n^{1+a}$,
where $n$ is the number of nodes and 
$a$ can be freely taken in $(0, 1]$.
This extends the classical assumption of a dense graph for the convergence to a graphon, where the number of edges is of order $n^2$. 
However, all these FLLN results are about  epidemic models on random graphs
that are Markovian, where the infectious durations or recovery times are exponentially distributed. 
In the present paper, we establish an FLLN for a non-Markovian SIR model on large random graphs that results in a measure-valued limit on a graphon. 

Non-Markovian stochastic epidemic models with a homogeneous population (no age structure, homogeneous social connectivity, etc.) have been recently studied, see the survey \cite{FPP2022}. Although these models offer much more possibilities to fit observational data on infection profiles,
many of the classical tools in probability 
cannot  be directly exploited. 
In particular, the standard epidemic models with a constant infection rate and a general infection duration distribution are recently studied in \cite{PP-2020} (see also \cite{wang1977gaussian,wang1975limit} for Gaussian approximations and \cite{reinert1995} for measure-valued state descriptors and PDE limit for the FLLN), and epidemic models with
 varying infectivity are studied in \cite{FPP2020b,PP2020-FCLT-VI,FPP2022-MPMG}, where deterministic or stochastic integral equations are derived for the FLLNs and FCLTs, respectively. 
When adding the age of infection into the variables of the model,
  measure-valued processes have been used to describe the epidemic evolution dynamics in models with such infection-age dependent infectivity,  and the corresponding PDE and SPDE limits are established for the FLLNs and FCLTs in \cite{PP2021-PDE} and \cite{PP-2024-SPDE}, respectively. 
  This is extended to spatially dense SIR models in \cite{pp-2026}, where the FLLN is established with a PDE limit on a graphon. 
  An epidemic model with contact tracing and general infection duration distributions is studied in \cite{duchamps2023}, where measure-valued processes are used to describe the dynamics and the FLLN is established
with a PDE limit. In \cite{kubasch2023}, an individual-based multi-layer SIR model with households and workplaces to take into account the social connectivity heterogeneity is studied, where measure-valued processes tracking remaining infectious times are used to describe the epidemic dynamics and a PDE limit is established for the FLLN. 
\smallskip

In this paper we consider an SIR model with infection-age dependent infectivity on a large random graph that captures
both individual and connectivity heterogeneities.
In particular,
connectivities among individuals are given by
a non-homogeneous random graph.
Each node on this graph represents
a single individual,
each associated with an individual characteristic/feature.
These may account for spatial locations,
age and/or social behaviors.
So they may be distributed on a compact subset of $\RR^d$,
	as in the spatial SIR model considered in 
	\cite{pp-2026,KP2024spatial},  which we generalize in the simplest setting of pairwise interactions.
Concerning age structure or social activity levels, our model generalizes \cite{BBT2020} in that they only consider a finite number of compartments while our model includes a possibly continuous age model. 
The random connectivity of each edge is then assumed to depend on the individuals' characteristics in a general manner.  The connectivities indicate various levels of social activities, which may for instance depend on the spatial locations or age.
The random graphs that we consider are of the same form as the ones in \cite{DFG+24}. 
The main difference is that the connectivity of each edge 
was assumed in \cite{DFG+24}
to be a deterministic function of the pair of individuals' characteristics,
while we allow for an additional degree of randomness. In \cite{KHT2022}, there is no heterogeneity in the contact rate and the dependency of the graph on individual types is more restrictive. 
In \cite{BHI2024}, the study of SIR dynamics on large graphs only focuses on stochastic block models.

Each individual is moreover associated with a random varying infectivity function, which reflects the force of infection of each individual at any time after infection, and depends as well on the individual characteristics. 
As a result, the infectious duration is also individualized (we express the corresponding distribution 
as a function $F_x(\cdot)$ of the individual characteristic $x$). 

Individuals are grouped into three compartments: Susceptible, Infected and Recovered. Infections are generated by the interactions between  susceptible and infected individuals. 
Each individual is thus exposed
	to a specific force of infection
	which changes over time 
and  is given by aggregating
the weighted infectivity levels
of the individuals that are connected to him/her in the graph.
The whole dynamic of the epidemic
depends on the non-homogeneous random graph and on the random infectivity functions 
through the expression of the force of infection that is exerted from its infectious neighbors upon each individual.

To describe the evolution dynamics, we use three measure-valued processes for the three compartments, where  at each time $t$ the measure for the infected process is over both the individual characteristics and the infection ages, while the measure for the susceptible and recovered processes is only over the individual characteristics. 
We prove a FLLN for these measure-valued processes 
when the population size goes to infinity.  
As the population size increases, so does the connectivity graph. 
We impose conditions on the connectivity probabilities to keep the graph consistency as it grows,
which notably covers the case of a graphon in the limit. 
The limits for the FLLN are given by a set of measure-valued equations, from which we further 
characterize the measure-valued process tracking the infection ages of infected individuals
as the unique solution of a PDE. 
To note, the dynamics described from the set of measure-valued equations
and the PDE
may be seen as evolving on a graphon.

The PDE model is linear, with a boundary condition  given by the product of the measure-valued susceptible process and the aggregate force of infection.
A specific form of Picard iteration for these quantities
is used to deduce the existence of solutions. 
Solution uniqueness follows from classical methods based on the characteristics of the equation,
as in \cite[Section~3.2]{Ev10}, e.g.,
adapted for measure-valued solutions.

\smallskip

The novelty of our paper is in the combination of two aspects:
heterogeneity of the individuals and of the graph of interactions on the one hand, 
and infection-age dependent infectivity on the other hand.
The second aspect has been considered recently 
by some of the authors of the present paper in a homogeneous situation.
We borrow from these works,
see notably \cite{FPP2022-MPMG},
the effective techniques consisting in the construction
of an intermediate model
for which the infection rates are derived from the deterministic limiting model.
Whereas the limiting infectivity function was previously described 
as acting on total count processes,
it is acting in the present paper
at the individual level, 
with a dependence on the characteristics of each individual 
through the limiting connectivity kernel 
and infectivity function.

The variance-based approach to approximating random sums by their expectations, previously used in \cite{FPP2022-MPMG} for treating the heterogeneity in the choice of the infectivity function, 
is effective in our framework 
with additional heterogeneity, see Lemma~\ref{lem_VN_bound}.
The conditions that we assume on the average infection rate per edge
and on the average variance in this infection rate
seem very close to be optimal. 

Nonetheless, the combination of both heterogeneities 
induces some complications
for the control of the error induced by the replacement of the original model 
by that simpler one, see in particular the proof of Lemma~\ref{lem_AN_bound}.

As in \cite{DFG+24}, 
	we have relaxed as much as possible
the regularity assumptions 
of the interaction kernel in the limit,
which we only assume to be almost everywhere continuous. 
In addition, we allow for the individual characteristics to take values in a state space as general as possible,
which implies that the state descriptors evolve as measure-valued processes.
Whereas the initial condition is generally assumed to be given through an independent and identically distributed (i.i.d.) sample,
we have merely assumed weak convergence in probability in terms of distributions.
The resulting difficulty concerns particularly 
the discrepancies in the initial conditions, 
see the proof of  Lemma~\ref{lem_EN_bound}, which is 
based on a new convergence result of independent interest
for weakly continuous kernels on general Polish spaces, see Proposition~\ref{prop_cvg_mu0}.

\subsection{Organization of the paper}
The rest of the paper is organized as follows. We give a summary of some common notations used throughout the paper in the next subsection. We provide a detailed model description and our assumptions in Section \ref{sec:model}. 
In Section \ref{sec:FLLN}, we state the main result of the paper. 
We establish the uniqueness of the solution to the limiting deterministic PDE model and establish some properties of that solution   in Section \ref{sec:limit}. The proofs of the convergence in the FLLN are given in Section~\ref{sec:proofs}, with additional technical supporting results proved
in Appendix~\ref{sec:appendix}. Concluding remarks are provided in Section~\ref{sec_disc_MT}. 

\subsection{Notation}
	We denote by $\mathbb{N} = \{1,2,\ldots\}$ the set of positive integers. 
	We also set $\II{1, n}=\{k\in \mathbb{N} \, \colon\, 1\leq k\leq n\}$
	for $n\in \mathbb{N}$. 
	For $a,b\in \RR$, we write 
	$a \wedge b$  for  the minimum between $a$ and $b$ and 
	$a \vee b$  for  the maximum between $a$ and $b$. 
	For a real-valued function defined on a set $\cX$, 
	we denote by $\Ninf{f}$ its
	supremum norm with 
	\[
	\Ninf{f}=\sup_{x\in\cX}   |f(x)|.
	\]
	We denote by $C_b(\cX)$ the set of continuous and bounded functions on a metric space $\cX$.

In what follows  $(\bX, \Rd)$ will denote a  Polish metric space, i.e., complete and separable. 
	We  denote   by
	$\cM_1(\bX)$  and $\cM(\bX)$ the  sets   of   probability 
	and 
	 finite measures   on~$\bX$, respectively. 
	For $\mu\in \cM(\bX)$ and a real-valued measurable function $f$ defined
	on $\bX$, we  will sometimes denote the integral of  $f$ with respect
	to      the      measure       $\mu$,      if      well-defined,      by
	$\langle \mu,\, f\rangle = \int_\bX f(x )\, \mu(\Rd x)= \int f\, \Rd \mu$.
We 	endow $\cM_1(\bX)$ and $\cM(\bX)$ with  the topology of weak convergence. 
$\cM_1(\bX)$ and $\cM(\bX)$ are Polish metric spaces with the Lévy-Prokhorov metric,
	cf. \cite[Section II.4]{perkins}.
We denote 
by $\bD(\RR_+,  \cM(\bX))$ the space of  
 paths from $\RR_+$  to $\cM(\bX)$	
that are right-continuous
with left limits (c{\'a}d-l{\'a}g).  This  space is
endowed  with the  Skorokhod   topology  (see, \textit{e.g.}, \cite[Chapter
3]{billingsley99}).
We shall also use the two following Skorokhod spaces:
$\bD(\RR_+,  \RR)$ and $\bD(\RR_+,  \cM(\bX\times \RR_+))$.

\section{Model description} \label{sec:model}

We consider a population consisting of  $N$ individuals, who are in one 
of the three states -- susceptible, infected or recovered at each time,  and may interact with each other according to a random graph described below. 
Susceptible individuals can be infected randomly through interactions with the infected ones. Once infected, an individual will  become infectious for a random period of time until recovery, and once recovered, he or she will no longer infect any susceptibles, nor become infected again.  
Let $ \cS^N(t),  \cI^N(t),  \cR^N(t)$ be the subsets of $\II{1, N}$ that denote the sets of susceptible, infected or recovered individuals at time $t$, respectively. The corresponding processes $S^N(t) = |\cS^N(t)|$,  $I^N(t) = |\cI^N(t)|$ and  $R^N(t) = |\cR^N(t)|$ denote the numbers of susceptible, infected or recovered individuals at  time $t$. 

In the limit of a large population,
	our goal is to relate the spread of the disease in the population of $N$ individuals
	to a dynamics acting on a Polish space $\bX$ 
	that accounts for the heterogeneity of individuals.
Any individual $i\in \II{1, N}$ is actually characterized by some random variable $X_i^N\in \bX$
 (which is fixed over time).
The characteristic $X_i^N$ of individual~$i$ 
is allowed to affect
the infectious contact rates with the other individuals
as well as the  evolution of its own infectivity level. 

\subsection{Infection-age dependent infectivity}\label{sec_def}
For an individual $i\in \cS^N(0)$, let  $\tau_i^N>0$ be the time at which he/she starts being infected, and let $A_i^N(t) = t - \tau_i^N$ be the infection age of the individual $i$ at time $t$
(by default it is equal to zero for $t<\tau_i^N$).
For an initially infected individual $j \in \cI^N(0)$, let $\tau_j^N = - A_j^N(0)$ denote the infection time before time $0$, so that $A_j^N(t) = t + A_j^N(0)$ is the infection age at time $t$. 

Let $\cF_0^N$ be the $\sigma$-field generated by the $(X_i^N, i\le N)$,
 $\cS^N(0)$, $\cI^N(0)$ and $(A_j^N(0), j\in \cI^N(0))$.
We shall generally consider the dynamics conditional on this sigma-field $\cF_0^N$.
Thus, we shall use the notations $\PR_0^N$ and $\bE_0^N$
	to express probabilities and expectations 
	conditional on $\cF_0^N$, respectively.

At time $t$, each individual $i \in \cI^N(t)$ who is infected has 
 an infectivity function depending randomly on  its  infection age:  
\begin{equation}\label{eq_def_lambdaiN}
	\lambda^N_i(A_i^N(t)), \quad t\ge 0\, , 
\end{equation}
where $\lambda^N_i$ is assumed not to  be identically  0. 
 In particular, for the initially infected individuals $j\in \cI^N(0)$,
 their infectivity level  at a given time $t>0$  is:
	\begin{equation*}
\lambda^{N, 0}_j(t) := \lambda^N_j(t + A_j^N(0))\,.
	\end{equation*}

\begin{rem}\label{rem_wht_lam}
A typical way to define these random functions $\lambda^N_i(\cdot)$ is specified as follows,
although we do not rely in our proofs on this representation.  
Let $\cY^N=\{Y_i^N\}_{i\le N}$ be 
an i.i.d. sequence of uniform random variables on $[0, 1]$, independent from $\cF_0^N$.
	Let $\wht \lambda$ be a deterministic measurable function from $\bX\times [0, 1]\times\RR_+$
to $\RR_+$ that is càd-làg in its last component.
Then we can define the random functions $\lambda^N_i(\cdot)$ for each $i$ as 
\begin{equation}\label{eq_def_lamY}
	\lambda^N_i(a) = \wht \lambda(X_i^N, Y_i^N, a)\,. 
\end{equation}
The variables $\{Y_i^N\}$ capture the random factors 
associated with the infectivity of the individuals
apart from their characteristics and their infection age.

There is no further generality in considering 
$\cY^N=\{Y_i^N\}_{i\le N}$ be 
an i.i.d. sequence of random variables  taking values in $\bY$, where $\bY$ is a Polish space,
by adjusting the definition of  $\wht \lambda$, see \cite[Theorem~3.19]{Ka02}.
\end{rem}

\begin{hyp} \label{hyp-lambda}
There exists a global upper-bound $\lambda^*<\infty$
	on the random functions  $\lambda^N_i$ taking values in $\bD(\RR_+, \RR_+)$,
	in the sense that the following holds almost surely: 
\begin{equation*}
\lambda^N_i(a) \le \lambda^*\,,
	\quad \forall N,\;  \forall i\in \II{1, N},\;  \forall a  \in \RR_+\,. 
\end{equation*}
\end{hyp}
\begin{rem}
For Assumption \ref{hyp-lambda} to hold under the formalism presented in Remark~\ref{rem_wht_lam},
it is sufficient to require the function $\wht \lambda$ to be uniformly upper-bounded by this value $\lambda^*$.
\end{rem}

The infection duration $\eta^N_i$ of individual $i\in \cS^N(0)$ is defined as 
\begin{equation*}
\eta^N_i := \sup\{a>0:  \lambda_i^N(a)>0\}\,.
\end{equation*}
Likewise, we consider the infection duration $\eta^{N, 0}_j$ of  
an initially infected individual $j\in \cI^N(0)$
with initial infection age $A^N(0)$:
\begin{equation*}
\eta^{N, 0}_j := \sup\{a>0:  \lambda_j^N(A^N(0) + a)>0\}\,,
\end{equation*}
while assuming that there is no pathological situation where the above set would be empty. 
	We specify the individual distribution of infection duration
through its cumulative distribution function. 
Under the formalism of Remark~\ref{rem_wht_lam}
it is to be thought as 
	depending only on the variables $(X_i^N, Y^N_i)$.

	\begin{hyp}\label{hyp-eta}
	There exists a measurable function $(F^c_x(a))_{x\in \bX, a\in \RR_+}$
		with values in $[0, 1]$
		which is right-continuous, non-increasing in the variable `$a$'
such that $\lim_{a\to \infty} F^c_x(a) = 0$ for all $x\in \bX$
		and  such that the two following identities 
		hold a.s. for any $N\ge 1$, $t>0$, $i\in \cS^N(0)$
		and $j\in \cI^N(0)$: 
\begin{equation*}
\begin{split}
\PR_0^N(\eta^N_i > t) 
&= F^{c}_{X^N_i}(t),
\\
\PR_0^N(\eta^{N, 0}_j > t)
&= \frac{F_{X^N_j}^{c}(A^N_j(0)+t)}{F_{X^N_j}^{c}(A^N_j(0))}\,. 
\end{split}
\end{equation*}
\end{hyp}


Recall that $\PR_0^N(\eta^N_i > t)$ equals by definition
$\PR\big(\eta^N_i > t \, \big| \, \cF_0^N\big)$
and similarly that  $\PR_0^N(\eta^{N, 0}_j > t)
=\PR\big(\eta^{N, 0}_j > t \, \big| \, \cF_0^N\big)$. 
Note that the condition on  $\eta^{N, 0}_j$ in Assumption \ref{hyp-eta} excludes the case where 
 the denominator in the above expression would be zero.  

 Let $F_x:= 1- F^c_x$, which is interpreted as the c.d.f. of the infection durations $\eta^N_i$ given that $X_i^N=x$.

\begin{rem}\label{rem_g}
This assumption appears more explicitly stated 
under the formalism presented in Remark~\ref{rem_wht_lam}.
By definition, 
there exists a deterministic measurable function 
$g: \bX\times [0, 1]\to \RR_+ $ 
such that $\eta^N_i = g(X^N_i, Y^N_i)$ a.s.
For each $x\in \bX$, we define the cumulative distribution: 
\[
F_x(t) = \PR(g(x, Y^N_1) \le t ) = \int_{[0, 1]} \idc{g(x,y) \le t} \Rd y
\]
and  set $F^c_x(\cdot) = 1- F_x(\cdot)$. 
$F^c_x$ is clearly non-increasing for any $x\in \bX$, with values in $[0, 1]$.
As a consequence, we check
the first identity in Assumption \ref{hyp-eta},
for any $N\ge 1$,  any $i\in \cS^N(0)$, and any $t>0$: 
\begin{equation*}
	\PR_0^N(\eta^N_i > t)
	=  \PR\big(g(X_i^N, Y^N_i) > t \, \big| \, X_i^N\big)  = F^c_{X^N_i}(t).
\end{equation*}
For any $N$ and any initially infected individual $j\in \cI^N(0)$,
the constraint that $\eta^{N, 0}_j$ is a well-defined positive quantity, which translates into $g(X_j^N, Y^N_j) > A_j^N(0)$,
explains the form of the second identity in Assumption~\ref{hyp-eta},
where 
for any $t>0$, 
\begin{equation*}
	\begin{split}
		\PR_0^N(\eta^{N,0}_j > t)
		&= \PR\big(g(X_j^N, Y^N_j) > t + A_j^N(0)\, \big| \, X_j^N, A_j^N(0), \idc{g(X_j^N, Y^N_j) > A_j^N(0)}\big)
		\\&= \frac{F_{X^N_j}^c(A^N_j(0)+t)}{F_{X^N_j}^c(A^N_j(0))}\,.
	\end{split}
\end{equation*}
\end{rem}

\begin{hyp}\label{hyp-Blambda}
There exists a deterministic measurable function $\bar\lambda$ from $\bX\times \RR_+$ to $\RR_+$
such that the two following identities hold for any $N\ge 1$, 	any $a\ge 0$, any $t\ge 0$, any $i\in \cS^N(0)$
and any $j\in \cI^N(0)$:
\begin{equation*}
\begin{split}
	\bE_0^N\big[\lambda^N_i(a)\big]
&= \bar\lambda(X_i^N, a),
\\ \bE_0^N\big[\lambda^{N, 0}_j(t)\big]
&=  \frac{\bar\lambda(X_j^N, A^N_j(0)+t) } {F^{c}_{X^N_j}(A^N_j(0))},
\end{split}
\end{equation*}
where $F^{c}_{X^N_j}(A^N_j(0))>0$ 
holds a.s. for any $j\in \cI^{N}(0)$. 
Moreover, 
the function  $\chi: \bX\times \RR_+\mapsto 
\RR_+$
defined as $\chi(x, a) = \bar\lambda(x, a)/F^{c}_{x}(a)$ 
if $F^{c}_{x}(a)>0$ and $\chi(x, a) = 0$ otherwise, 
is upper-bounded by $\lambda^*$.
\end{hyp}

Additional conditions on the regularity of $\bar\lambda$ will be stated in Assumption~\ref{hyp-w}, once the limiting measures on $\bX$ will be defined. 
Recall that $\bE_0^N\big[\lambda^N_i(a)\big]$
	equals by definition $\bE\big[\lambda^N_i(a)\, \big| \, \cF^N_0\big]$
	and similarly for $\bE_0^N\big[\lambda^{N, 0}_j(t)\big]$. 

\begin{rem}
Under the formalism presented in Remark~\ref{rem_wht_lam},
$\bar\lambda$ is directly expressed as follows:
\begin{equation*} 
\bar\lambda(x, a) =  \int_{[0, 1]} \wht \lambda(x, y, a)\; \Rd y\,,
\end{equation*}
and the first identity in Assumption \ref{hyp-Blambda} is automatic.
We can also check the second identity:
\begin{equation*}
\begin{split}
	\bE_0^N\big[\lambda^{N, 0}_j(t)\big]
&= \bE\big[\wht \lambda(X_j^N, Y_j^N, t + A_j^N(0))\, \big| \, X_j^N, A^N_j(0), \idc{g(X_j^N, Y^N_j) > A_j^N(0)}\big]
\\&=   \frac{\int_{[0, 1]} \wht \lambda(X_j^N, y, t + A_j^N(0))\; \Rd y} {F_{X^N_j}^c(A^N_j(0))}
= \frac{\bar\lambda(X_j^N, t+A^N_j(0)) } {F^{c}_{X^N_j}(A^N_j(0))},
\end{split}
\end{equation*}
where we have exploited in the second line the fact that $\wht \lambda(X_j^N, y, t + A_j^N(0))>0$
entails $g(X_j^N, Y^N_j) > t+ A_j^N(0)$ and a fortiori $g(X_j^N, Y^N_j) > A_j^N(0)$.
In addition, for any $x\in \bX$ and any $a\ge 0$ such that $F^c_x(a) >0$, since $\wht \lambda(x,y,a)=0$ on the set $\{g(x,y) \le a\}$, 
\begin{equation*}
\frac{\bar\lambda(x, a)}{F^c_x(a)} 
= \frac{\int_{[0, 1]} \wht \lambda(x, y, a)\, \idc{g(x, y) > a}\; \Rd y}{
\int_{[0, 1]}  \idc{g(x, y) > a}\; \Rd y}\,,
\end{equation*}
and is actually upper-bounded by $\lambda^*$
as soon as  $\wht \lambda$ is itself upper-bounded by $\lambda^*$.
\end{rem}

\begin{rem}\label{rem_lamb_dec}
In this work, we generally allow for intricate dependencies 
between the possible value of $\lambda_i^N(a)$ and the infection duration of individual $i$.
Though, 
a classical choice for the function $\wht \lambda$ is to be defined
in terms of two   given deterministic positive functions $\wtd\lambda(x, a)$ and $g(x,y)$
as follows: $\wht \lambda(x, y, a) = \wtd\lambda(x, a) \idc{a<g(x,y)}$.
In this case, 
the expression for $\bar\lambda$ simplifies as follows:
\begin{equation}\label{eq_wtd_lam}
\bar\lambda(X_i^N, a)  =  \wtd\lambda(X_i^N, a)\cdot F^c_{X^N_i}(a). 
\end{equation}
\end{rem}

\begin{rem}
	As an  instance of the case presented in the above Remark \ref{rem_lamb_dec}, 
the most standard Markovian description of a constant infectivity $\check \lambda>0$
with an exponential random duration with mean $\check g(x)$ is obtained as follows.
We can choose $\mu_Y$ to be the law of a standard exponential random variable with mean $1$,
on $\bY = \RR_+$,
and take  $\wtd\lambda(x,a) \equiv \check \lambda$ and $g(x,y) = \check g(x) \cdot y$.
The expression for $\bar\lambda$ further simplifies:
\[
\bar\lambda(X_i^N, a)  =  \check\lambda\cdot \exp\Big(-a/\check g(X^N_i)\Big). 
\]
\end{rem}

\begin{rem}
In the previous assumptions, we did not allow the infectious period to extend indefinitely, in line with the SIR formalism. It does however  not appear crucial in the following proofs. Some SI formalism without recovery 
or even a mix between SI and SIR formalism 
could be treated very similarly.
We mean situations where 
$\PR(\eta^N_i= +\infty)>0$ for some $i$,
with possibly $F_x$ upper-bounded by a constant strictly smaller than 1  for some non-negligible set of $x\in \bX$  
and the function $g$ in Remark~\ref{rem_g} with possibly infinite values.
\end{rem}

\subsection{Construction of the connectivity graph}

We consider the underlying connectivity graph among the individuals, which is denoted as a graph $(\cG^N, \cE^N)$ with $\cG^N$ being the set of $N$ nodes and  $\cE^N$ being the set of edges
{(which are undirected)}. 
We write $i \stackrel{N}{\sim} j$ to indicate that nodes $i$ and $j$ are connected, whose connectivity depends on $N$. 
We assume that the connectivity probability of nodes $i,j \in \cG^N$ is given by
 the  deterministic symmetric measurable function $\kappa^N: \bX\times \bX\to [0,1]$:
\begin{equation}\label{eq_def_kN}
\PR_0^N\big(i \stackrel{N}{\sim} j\big)
= \PR\big(i \stackrel{N}{\sim} j\, \big| \, \cF_0^N\big)
= \kappa^N(X^N_i, X^N_j),
\end{equation}
in terms of the  characteristic variables $X_i$ and $X_j$ for individuals $i$ and $j$.
We assume in addition that the events $\{i \stackrel{N}{\sim} j\}_{i\neq j}$
are mutually independent
and independent of the sequence $(Y_i^N)_{i\le N}$
conditionally on $\cF_0^N$.
\medskip

\begin{rem}\label{rem_kxy}
The typical construction of such a random graph starts from a graphon kernel. 
For example, 
assuming here that $\bX = [0, 1]$, 
let $\bar \kappa(x,x')$ be a deterministic function on $[0,1]^2$, 
such as $\bar \kappa(x,x') = x\cdot x'$.
One can sample  a $N$-node random graph with $\kappa^N = \bar\kappa$
so that $\PR_0^N\big(i \stackrel{N}{\sim} j\big) = \bar \kappa(X^N_i, X^N_j)$ for nodes $i,j=1,\dots, N$, where the $X^N_i$ are 
uniformly and independently distributed in $[0, 1]$.
Another very natural choice of $X^N_i$ is also $X^N_i=i/N$ for $i=1,\dots,N$,
which also leads to the limiting empirical measure $\bar \mu_X$, as defined in the  formula \eqref{eq_def_bmuX} below, being the Lebesgue measure on $[0, 1]$.
\end{rem}

\begin{rem}
In the case $\bar \kappa(x,x') = x\cdot x'$,
the degree of individual $i$ in the graph
then scales linearly with $N$ and with the individual type $X_i^N$.
The fact that the degree scales linearly in $N$ is typically how a ``dense" graph is defined.
Yet, we allow for more generality in the graph density 
with a sequence $(\kappa^N)$ of functions that may scale with $N$.
The first way to do so 
is to introduce a scaling factor $\eps^N>0$,
typically $\eps^N = N^{-\alpha}$ with $\alpha \in (0, 1)$,
so that $\kappa^N = \eps^N\cdot \bar \kappa$ as in \cite{KHT2022}.
Following \cite{DFG+24},
this form of scaling is however not assumed in order to allow for even more general
dependencies between the pair of individual characteristics $(X_i^N, X_j^N)$ and $N$.
The role of the denseness of the graph will be discussed
in Remark~\ref{rem_Var}, and in Section~\ref{sec_disc_MT}
after the proofs.
\end{rem}
 
\subsection{Force of infection}

To describe the force of infection, we introduce a random weight function 
$(\omega^N(i, j))_{i, j\in \II{1, N}}$
that is non-negative 
and equal to zero 
except for the edges $(i, j)$ of $\cE^N$.
$\omega^N(i, j)$ is meant to represent the scaling factor
	that translates the infectivity value of individual $j$,
	typically $\lambda_j^N(A_j^N(t))$ at time $t$, 
into the rate at which individual $i$ is subject
to an infectious contact with individual $j$,
which becomes $\omega^N(i, j) \cdot \lambda_j^N(A_j^N(t))$ at time $t$.

\begin{rem}
Heterogeneity	 in the susceptibility to infection
can be included in the model through these values $\omega^N(i, j)$,
such heterogeneity between individuals 
being possibly partly explained by the characteristic $X_i^N$.
This is a major reason for us not to assume that $\omega^N$ is symmetric.
Such a framework for $\omega^N(i, j)$ allows in addition
more realistic and intricate relations between the contact rates 
of individuals $i$ and $j$ 
depending on their respective characteristics $X_i^N$ and $X_j^N$.
Even more generally than
contact matrices typically exploited for epidemiological predictions,
see, e.g., \cite{MHJ+08},
we allow for the interplay between  characteristics that are continuous
and for an additional degree of independent randomness.

\end{rem}

\begin{hyp}\label{hyp_gamma}
The values $(\omega^N(i, j))_{i, j\in \II{1, N}}$ 
are mutually independent between different edges
and independent of the $(\lambda_i^N)_{i\le N}$ conditionally on $\cF_0^N$. 
There exists a deterministic measurable function $\gamma^N: \bX\times \bX\to \RR_+$
such that:
\begin{equation}
	\bE\big[\omega^N(i, j) \, \big| \, \cF_0^N,  \cE^N\big] 
	= 
	\begin{cases}
		\gamma^N(X_i^N, X_j^N) \quad \text{ if } (i, j) \in \cE^N,
		\\
		0 \hcm{2.5} \text{ otherwise.} 
	\end{cases}
	\label{eq_def_gamN}
\end{equation}
\end{hyp}

\begin{rem}
Under the formalism given in Remark~\ref{rem_wht_lam},
the values $(\omega^N(i, j))_{i, j\in \II{1, N}}$ are  independent of the $(\lambda_i^N)_{i\le N}$
if they are independent of the $(Y_i^N)$,  conditionally on $\cF_0^N$.
\end{rem}
The function $\gamma^N$
captures the expected infectious rate of interaction for active contacts. 
As a consequence of these definitions, we have 
\begin{equation}\label{eq_prop_omN}
	N\cdot\bE_0^N\big[ \omega^N(i, j)] = 
	\bar \omega^N(X_i^N, X_j^N),
	\quad \text{where }\,\, \bar\omega^N := N\cdot \kappa^N\cdot\gamma^N\,. 
\end{equation}
Remark that $\gamma^N$, thus $\bar \omega^N$, are not required to be symmetric,
given that the allowed asymmetry of $\omega^N(i, j)$
has no good reason to vanish after taking conditional expectation. 
However, recall that the construction imposes $\kappa^N$ to be symmetric. 
\medskip

We  also allow for a degree of variability 
in the infectious rate of interaction,
that we synthetize with the following variance term $\upsilon^N$,
for any $(i, j) \in \cE^N$:
\begin{equation}
	\upsilon^N(i, j)
	= \Var\big[\omega^N(i, j) \, \big| \, \cF_0^N,  \cE^N \big]\,.
\label{eq_def_upsN}
\end{equation}
Our upcoming Assumption~\ref{hyp-Var} entails that the quantity $\upsilon^N(i, j)$ is finite, for any $i, j\in \II{1, N}$. 
Given the above definition of $\gamma^N$, 
this  definition of $\upsilon^N$ shall be interpreted as follows:
\begin{equation}\label{eq_prop_ups}
\bE \big[\omega^N(i, j)^2 \, \big| \, \cF_0^N,  \cE^N \big]
= 
\begin{cases}
\upsilon^N(i, j) + \gamma^N(X_i^N, X_j^N)^2 \quad \text{ if } (i, j) \in \cE^N,
	\\
	0 \hcm{4.5} \text{ otherwise.} 
\end{cases}
\end{equation}

For each individual $i$,  the aggregated force of infection at time $t$  acting upon~$i$ is given by
\begin{equation}\label{eq_def_mfFiN}
\overline\mfF_i^N(t) 
= \sum_{j\in \cI^N(t)} \omega^N(i, j)\cdot \lambda^N_j(A^N_j(t))\,. 
\end{equation}

\subsection{Epidemic dynamics}
For each $i \in \cS^N(0)$, we describe the progression of the disease through the following process:
\begin{equation}\label{eq_def_DiN}
D_i^N(t) = \int_0^t \int_0^\infty \idc{D_i^N(s^-)=0} \idc{u \le  \overline\mfF_i^N(s^-)} Q_i(\Rd s, \Rd u)\,,
\end{equation}
where  $Q_i(\Rd s, \Rd u)$ is a standard Poisson random measure on $\RR_+^2$ with mean measure $\Rd s\, \Rd u$,
the $(Q_j)_{j\in \mathbb{N}}$ being mutually independent and independent of  $\cF_0^N$, 
the random graph generation $(\omega^N(j, k))_{j, k\le N}$,
and the random infectivity functions $(\lambda_j^N)_{j\le N}$
(in the sense that they are independent of the $\cY^N$ in the formulation \eqref{eq_def_lamY}).
Let
\begin{equation}
\tau_i^N = \inf\{t\ge 0;\, D_i^N(t)= 1\};\quad 
\sD^N(t) := \{i \in \cS ^N(0);\, \tau_i^N \le t\}, 
\label{eq_def_DN}
\end{equation}
so that the infection time $\tau_i^N$ is the unique jump time of $D_i^N$ 
(which takes the value $\infty$ in the absence of such a jump),
while $\sD^N(t)$ is the subset of initially susceptible individuals infected by the disease  by time $t$ after time $0$, 
or equivalently those $i\in \cS^N(0)$ 
	such that  $D_i^N(t) = 1$.
	In other words, $\sD^N(t) =( \cI^N(t) \cup \cR^N(t)) \setminus (\cI^N(0) \cup \cR^N(0))$.

	Using  $D_i^N(t)$ in \eqref{eq_def_DiN}, 
we also obtain the following alternative expression for  $\mfF_i^N(t)$ for each $i=1,\dots, N$: 
\begin{align} \label{eq_def_mfFiN2}
 \overline\mfF_i^N(t) 
&= \sum_{k \in \mathcal{I}^N(0)}  \omega^N(i, k)\lambda^N_k(A_k^N(0) + t)  \nonumber \\
 &\quad + 
 \sum_{j\in  \cS^N(0)} \omega^N(i, j)  \int_0^t  \int_0^\infty \ \lambda^N_j(t-s) \idc{D^N_j(s^-)=0} \idc{u \le \overline \mfF_j^N(s^-)}  Q_j(\Rd s, \Rd u)\,.
\end{align}

Define the following measure-valued processes associated with the susceptible, infectious and recovered individuals: 
  \begin{equation}
 	\bar\mu^{S, N}_t(\Rd x)
 	= \frac1N\sum_{i\in \cS^{N}(t)} \delta_{X^N_i}(\Rd x)\,,
 	\label{eq_def_muSNt}
 \end{equation}
 \begin{equation}
	\bar\mu^{I, N}_t(\Rd x, \Rd a)
	=  \frac1N\sum_{i\in \cI^{N}(t)} \delta_{X^N_i}(\Rd x)\delta_{A_i^N(t)}(\Rd a)\,,
	\label{eq_def_muINt}
\end{equation}
\begin{equation}
	\bar\mu^{R, N}_t(\Rd x)
	= \frac1N \sum_{i\in \cR^{N}(t)} \delta_{X^N_i}(\Rd x)\,. 
	\label{eq_def_muRNt}
\end{equation}

For any $t>0$, 
	 	$\bar\mu^{S, N}_t$ and $\bar\mu^{R, N}_t$
are regarded as elements in the set $\cM(\bX)$
	 	of non-negative finite measure on $\bX$,
	which is  equipped with the topology of weak convergence,
	 	and similarly for $\bar\mu^{I, N}_t$
	 belonging 	to the set $\cM(\bX\times \RR_+)$. 
 
The dynamics of these measure-valued processes 
can be represented using  $D_i^N(t)$ and $ \overline\mfF_i^N(t)$ as follows, 
in terms of test functions  $\varphi \in C_b(\bX)$ and $\psi \in C_b(\bX\times \RR_+)$: 
\begin{itemize}
\item for the susceptible process:
\begin{equation} \label{eq_def_muSNt1}
	\langle\bar\mu^{S,N}_t, \varphi\rangle 
	=\langle\bar\mu^{S,N}_0, \varphi\rangle - 
	\frac1N\sum_{i\in  \sD^N(t)} \varphi(X^N_i)\,,
\end{equation}
with 
	\begin{equation*}
		\sum_{i\in  \sD^N(t)} \varphi(X^N_i)
		=  \sum_{i\in  \cS^N(0)} \int_0^t  \int_0^\infty \idc{D_i^N(s^-)=0} \idc{u \le \overline \mfF^N_i(s^-)} \varphi(X^N_i) Q_i(\Rd s, \Rd u), 
	\end{equation*}
\item for the infected process:
\begin{equation}	\label{eq_def_muINt1}
	\langle\bar\mu^{I,N}_t, \psi\rangle 
	= \frac1N\sum_{j\in \cI^N(0)} \idc{\eta_j^{N, 0} >t} \psi(X^N_j, A_j^N(0)+t)
	+ \frac1N\sum_{i\in \sD^N(t)} \idc{\tau^N_i +\eta^N_i>t} \psi(X^N_i, t-\tau^N_i)\,,
\end{equation}
with 
	\begin{multline*}
	\sum_{i\in \sD^N(t)} \idc{\tau^N_i +\eta^N_i>t} \psi(X^N_i, t-\tau^N_i)
	\\	=  \sum_{i\in  \cS^N(0)} \int_0^t  \int_0^\infty 
		\idc{D_i^N(s^-)=0} \idc{u \le \overline \mfF^N_i(s^-)} \idc{\eta^N_i>t-s} \psi(X^N_i, t - s) Q_i(\Rd s, \Rd u)\,,
	\end{multline*}
\item for the recovered process:
\begin{equation}	\label{eq_def_muRNt1}
	\langle\bar\mu^{R,N}_t, \varphi\rangle =  \langle \bar\mu^{R,N}_0, \varphi \rangle  
	+ \frac1N\sum_{j\in \cI^N(0)} \idc{\eta_j^{N, 0} \le t} \varphi(X^N_j) 
	+ \frac1N\sum_{i\in \sD^N(t)} \idc{\tau^N_i +\eta_i^N\le t} \varphi(X^N_i)\,.
\end{equation}
\end{itemize}

Note that the processes $S^N(t)$,  $I^N(t)$ and  $R^N(t)$
	corresponding to respectively the numbers of susceptible, infected and recovered individuals at time $t$
	 can be obtained from these measure-valued processes: 
$S^N(t) = N\langle\bar\mu^{S,N}_t, \mathbbm{1}\rangle $,  $I^N(t) = N\langle\bar\mu^{I,N}_t, \mathbbm{1}\rangle$ and  $R^N(t) = N\langle\bar\mu^{R,N}_t, \mathbbm{1}\rangle$ with $\mathbbm{1}(x)\equiv 1$ and $\mathbbm{1}(a,x) \equiv 1$. 

We make the following assumptions on the initial quantities.
\begin{hyp} \label{hyp-initial}
There exist finite measures $\bar\mu^S_0(\Rd x)$ on $\bX$, $\bar\mu^I_0(\Rd x,\Rd a)$ on $\bX\times \RR_+$ and $\bar\mu^R_0(\Rd x)$ on $\bX$ 
that are the weak limits in probability as $N\to\infty$ of respectively $\bar \mu_0^{S, N}$, $\bar \mu_0^{I, N}$, and $\bar \mu_0^{R, N}$.
\end{hyp}

We consider also the two distributions 
$\bar\mu_X^N$ and $\bar\mu_X$
of characteristics on $\bX$ for the whole population: 
\begin{align}
\bar \mu_X^N(\Rd x)
&=  \bar\mu^{S, N}_0(\Rd x) + \langle \bar\mu^{I, N}_0(\Rd x, .), {\bf1}\rangle + \bar\mu^{R, N}_0(\Rd x)
= \frac{1}{N} \sum_{i\le N} \delta_{X^N_i}(\Rd x),\quad 
\label{eq_def_muXN}
\\
\bar \mu_X(\Rd x)
&= \bar\mu^S_0(\Rd x) + \langle \bar\mu^{I}_0(\Rd x, .), {\bf1}\rangle + \bar\mu^{R}_0(\Rd x).
\label{eq_def_bmuX}
\end{align}
Here ${\bf 1}(a) \equiv 1$.  
A direct consequence of Assumption \ref{hyp-initial}
is that $\bar \mu_X $ is a probability distribution, 
and  $\bar \mu_X^N$  converges weakly to $\bar \mu_X$.

In addition,
we make the following two assumptions 
to capture the behavior of the graph structure $(\omega^N(i, j))_{i, j\in \II{1, N}}$
as $N$ tends to infinity,
firstly in terms of conditional expectations.
We include additional regularity conditions together with the first assumption,
on the quantities introduced in Assumptions~\ref{hyp-eta},
	 \ref{hyp-Blambda} and \ref{hyp-initial}. 
\begin{hyp} \label{hyp-w}
Recalling the relation $\bar\omega^N = N\cdot \kappa^N\cdot\gamma^N$,
	the following convergence as $N\to\infty$
	holds in probability uniformly over $\bX\times\bX$:
	\[
	\bar\omega^N  \to \bar \omega, 
	\]
where $\bar\omega: \bX\times \bX\to \RR_+$
is some deterministic and bounded measurable function 
	that is $\bar \mu_X^{\otimes 2}$-almost everywhere (a.e.) continuous.

$\bar \mu^I_0$ is absolutely continuous with respect to $(\bar \mu_X\otimes Leb)$,
where $Leb$ denotes the Lebesgue measure on $\RR_+$.
The function $:(x, a)\mapsto F_x(a)$ 
is $\bar \mu^I_0$-a.e. continuous
while the function $\bar\lambda$
is $(\bar \mu_X\otimes Leb)$-a.e. continuous.
\end{hyp}

Assumption~\ref{hyp-w} is stated in terms of 
 the convergence of the product $N\gamma^N\kappa^N$
without requiring separate convergence properties 
of $\gamma^N$  and  $\kappa^N$. 
This choice was made to emphasize 
that the scaling in $N$ between the contact probability and the contact rate
could itself be mediated by various kinds of interactions
reflected through the individual types $X_i^N$ and $X_j^N$, 
for instance with more denseness but less frequent contacts 
for the infections during travels than at the workplaces.
\smallskip

Regarding our next convergence results in probability, there is no loss of generality in considering the following assumption.
\begin{hyp}\label{hyp_ws}
There exists $\omega^*>0$
that is an upper-bound of $\bar\omega^N$
uniformly on $\bX^2$ and on  $N$.
\end{hyp}

The next assumption provides the crucial estimate 
to deal with the variability of the random graph generation.
Its relation to the denseness of the graph
	and to possibly high levels of infection rates
	is discussed after the statement of the main result.
\begin{hyp} \label{hyp-Var}
	The two following quantities $\bar \gamma^N$ and $\Upsilon^N$ 
	converge in probability to zero as $N\to\infty$:
	\begin{equation}\label{eq_def_bGammaN}
		\begin{split}
			&\bar \gamma^N := \frac 1 {N^2} \sum_{i, j} \gamma^N(X_i^N, X_j^N),
			\\
			&\Upsilon^N := \frac 1 {N} \sum_{i, j} \bE_0^N\big[\upsilon^N(i, j); (i, j)\in \cE^N\big]
			\\&\qquad = \frac 1 {N} \sum_{i, j} \kappa^N(X_i^N, X_j^N)\cdot \bE_0^N\big[\upsilon^N(i, j) \, \big| \, (i, j)\in \cE^N\big]\,.
		\end{split}
	\end{equation}
\end{hyp}

By $\bE_0^N\big[\upsilon^N(i, j) \big| (i, j)\in \cE^N\big]$ is meant the ratio of $\bE\big[\upsilon^N(i, j); (i, j)\in \cE^N\, \big| \, \cF_0^N\big]$ divided by $\PR\big[(i, j)\in \cE^N\, \big| \, \cF_0^N\big]= \kappa^N(X_i^N, X_j^N)$.

\begin{rem}\label{rem_Var}
		To better understand the variance term in Assumption \ref{hyp-Var},
		let us consider the case 
		where the heterogeneity in individual contacts
		is purely neutral, so independent of the $(X_i^N)$.
	Let us 	assume  the existence of three positive parameters 
	$\bar\kappa^N$, $\bar\gamma^N$ and $\bar\sigma^N \in (0, \bar\gamma^N)$
such that the two following conditions hold 
in addition to the above-described construction:
		\begin{enumerate}
			\item [(i)] the graph of active contacts is an Erdös-R{\'e}nyi graph with parameter $\bar\kappa^N$, that is 
			$\PR_0^N\big(i \stackrel{N}{\sim} j\big) = \bar\kappa^N$ for any $i, j$.
			\item [(ii)] on the event	$\{(i, j)\in \cE^N\}$
			and conditionally on $\cF_0^N$,
			$\omega^N(i, j)$ is distributed as a uniform random variable 
			between $\bar\gamma^N - \bar\sigma^N$ and $\bar\gamma^N + \bar\sigma^N$.
		\end{enumerate}
		Then \eqref{eq_def_kN} and \eqref{eq_def_gamN} are satisfied with 
		\begin{equation*}
			\kappa^N(x, y) \equiv \bar\kappa^N,\quad \gamma^N(x, y) \equiv \bar\gamma^N,\quad
			\upsilon^N(i, j) \equiv \frac{(\bar \sigma^N)^2}{3}.
		\end{equation*}
		Assumption \ref{hyp-w} then translates into the convergence of the product $	N\,\bar\kappa^N\, \bar\gamma^N$
		to some limiting value $\bar \omega$.
		The notation $\bar\gamma^N$ is coherent with the formula given in Assumption~\ref{hyp-Var}
		while $\Upsilon^N = N\, \bar \upsilon^N\, \bar \kappa^N
		= N\cdot (\bar \sigma^N)^2\cdot \bar \kappa^N/3$.
		Let us assume the system to be non-degenerate in that $\bar \omega>0$ and $\bar \gamma^N>0$ for any $N$.

		Then, $\Upsilon^N \sim \bar \omega  \cdot (\bar \sigma^N)^2/ (3 \gamma^N)$
		converges to zero as required
		if and only if $(\bar \sigma^N)^2 \ll \gamma_N$.
		Since $\bar \sigma^N < \gamma_N$ in this case (to keep $\omega^N$ non-negative),
		both properties hold true if and only if $\gamma_N$ tends to zero, 
		which is exactly the first condition in Assumption~\ref{hyp-Var}.
		So we do not have any additional restriction on $\bar \sigma_N$ in this model.
		
				Generally with $\bar \upsilon^N$ the variance of the corresponding distribution in $(ii)$,
		$\Upsilon^N$ converges to zero 
		if and only if the standard deviation $\sqrt{\bar \upsilon^N}$ 
		is negligible against the root of the average rate 
		$\sqrt{\bar \gamma^N}$.
Given that $\bar \gamma^N$
itself is expected to tend to 0,
we stress that our result covers
 fluctuation levels
 in the rate of transmission
that are quite large in comparison to the expected value.
		\end{rem}

\section{Functional law of large numbers} \label{sec:FLLN}

For brevity, we use the notations
$\bD_1 := \bD(\RR_+, \cM(\bX))$ and $\bD_2 := \bD(\RR_+, \cM(\bX\times \RR_+))$.

\begin{theo}
	\label{thm_cvg}
Under Assumptions~\ref{hyp-lambda}--\ref{hyp-Var},
\[
(\bar \mu^{S, N}_{\cdot}, \bar \mu^{I, N}_{\cdot}, \bar \mu^{R, N}_{\cdot})
\to (\bar \mu^{S}_{\cdot}, \bar \mu^{I}_{\cdot}, \bar \mu^{R}_{\cdot})
\]
in $\bD_1 \times \bD_2 \times \bD_1$ as $N\to\infty$.
In the above limit, 
 $\bar \mu^{S}_{\cdot}$ is the first component 
 of the unique solution $(\bar \mu^{S}_{\cdot}, \overline{\mfF}(\cdot))$
 to the following set of equations, 
 \begin{align} \label{eq_def_muSt1}
\bar\mu^{S}_t(\Rd x) = \bar\mu^{S}_0(\Rd x) - \int_0^t  \overline{\mfF}(s,x) \bar\mu^{S}_s(\Rd x) \Rd s\,,
 \end{align}
 and
 \begin{equation}\label{eq_def_bar_mfFtx}
 \begin{split} 
\overline{\mfF}(t,x) &= \int_0^\infty\hcm{-0.3} \int_{\bX}  \bar \omega(x,x') \frac{\bar\lambda(x', a'+t)}{F^c_{x'}(a')}    \bar\mu_0^I(\Rd x', da')  \\
& \quad + \int_0^t\hcm{-0.2} \int_{\bX} \bar \omega(x,x') \bar\lambda(x',t-s) \overline{\mfF}(s,x') \bar\mu^S_s(\Rd x')\Rd s  \,,
 \end{split}
 \end{equation}
 with  $\bar\lambda$, $\bar \omega$ being given respectively in Assumptions \ref{hyp-Blambda} and \ref{hyp-w}.
The uniqueness of the above system
	is stated among the potential candidates $(\bar \mu^{S}_{t}, \overline{\mfF}(t))_{t\ge 0}$
in which $\bar \mu^{S}_{\cdot}$ is a $\cM(\bX)$-valued
càd-làg process 
such that $\mu_t(\bX)\le 1$
for any $t\ge 0$,
while $\overline{\mfF}(\cdot)$ is a càd-làg process whose values 
are bounded measurable functions from $\bX$ to $\RR_+$,
with local in time upper-bounds.

Given the pair $(\bar\mu^{S}_{\cdot}, \overline{\mfF}(\cdot))$
and the initial conditions $\bar{\mu}^I_0$ and $\bar{\mu}^R_0$, the explicit formulas for $\langle\bar\mu^I_t,\psi\rangle$ and $\langle\bar{\mu}^R_t,\phi\rangle$ 
 with the test functions  
$\psi \in C_b(\bX\times \RR_+)$ and $\varphi \in C_b(\bX)$	
	are as follows: 
   \begin{equation}
\begin{split}
	 	\langle \bar\mu^{I}_t, \psi\rangle  
 	&= \int_0^\infty\hcm{-0.3} \int_{\bX} \psi(x, a+t)  \frac{F^c_x(a+t)}{F^c_x(a)} \bar\mu_0^I(\Rd x,\Rd a) \\
 	& \quad + \int_0^t\hcm{-0.2} \int_{\bX} \psi(x,t-s)F^c_x(t-s) \overline{\mfF}(s,x) \bar\mu^S_s(\Rd x)\Rd s  \,,
\end{split}
\label{eq_def_bmuI}
 \end{equation}
 and 
   \begin{equation}
	\begin{split}
 	\langle \bar\mu^{R}_t, \varphi\rangle 
 	&= \langle \bar\mu^{R}_0, \varphi\rangle + 
 	\int_0^\infty\hcm{-0.3} \int_{\bX} \varphi(x)  \bigg(1-\frac{F^c_x(a+t)}{F^c_x(a)} \bigg)
 	\bar\mu_0^I(\Rd x,\Rd a) \\
 	&\quad  + \int_0^t\hcm{-0.2} \int_{\bX} \varphi(x) F_x(t-s) \overline{\mfF}(s,x) \bar\mu^S_s(\Rd x)\Rd s\,. 
 	\end{split} 
\label{eq_def_bmuR}
\end{equation}
\end{theo}

Note that the unique solution to equation~\eqref{eq_def_muSt1} is given by 
\begin{equation}\label{eq_def_bmuS2}
	\bar\mu^{S}_t (\Rd x) 
=  \,\exp\bigg(-\int_0^t  \overline{\mfF}(s,x) \Rd s\bigg) \bar\mu^{S}_0(\Rd x)\,,
\end{equation}
and that \eqref{eq_def_bmuI} results,  evaluated for any $x\in \bX$ 
	with the function 
	$\psi(x', a')= \omega(x, x')\frac{\bar \lambda(x', a')}{F_{x'}^c(a')}$
and after comparison with \eqref{eq_def_bar_mfFtx}, in the following expression for $\overline{\mfF}$:
	\begin{equation}\label{eq_prop_bar_mfFtx}
\overline{\mfF}(t,x)
= \int_0^\infty\hcm{-0.3} \int_{\bX}  \bar \omega(x,x') \frac{\bar\lambda(x', a')}{F^c_{x'}(a')}    \bar\mu_t^I(\Rd x', da')\,.
	\end{equation}

\begin{rem}
Our conditions on the limiting kernel of interaction $\bar \omega$ 
encompass most, if not all,
 of the (known) pairwise kernel interactions found in applications.
 It is through the expression of the limiting force of infection $\overline\mfF$
	in terms of the limiting kernel $\bar \omega$
	that we can relate our result
	to 	dynamics on a graphon.

	Consider the special case:  $\bar \omega(x, x') = \bar \gamma\cdot \bar\kappa(x, x')= \bar \gamma\cdot x\cdot x'$ for some value $\bar \gamma>0$. 
	This multiplicative form for $\bar \omega(x, x') $ is a typical example of a graphon. 
	Here we assume that the $(X_i^N)$ are uniformly distributed in $[0, 1]$, and $\bar \kappa^N = \bar \kappa$ for any $N$ and $\bar \kappa(x, x') = x\cdot x'$ as in Remark~\ref{rem_kxy}, and
	$\bar \gamma^N=\bar \gamma/N$ in Assumption~\ref{hyp-w} (hence,  $\gamma^N$ 
does not depend on the types of the individuals $i$ and $j$ in contact and $\Upsilon^N \equiv 0$).
This kernel structure displays 
a convenient property of separation 
between three contributions:
the role $(i)$  of the susceptible type, $(ii)$ of the infector type
and $(iii)$ of the contact rate.
This property
is then inherited by the force of infection
in that 
\[\overline{\mfF}(t,x) = \bar \gamma\cdot x\cdot\check{\mfF}(t)\]
where $\check{\mfF}(t)$ describes the aggregated contribution 
of the potential infectors at time $t$:
	\begin{equation*}
	\overline{\mfF}(t)
	= \int_0^\infty\hcm{-0.3} \int_{[0,1]}  x'\cdot \frac{\bar\lambda(x', a')}{F^c_{x'}(a')}    \bar\mu_t^I(\Rd x', da')\,.
	\end{equation*}
	
 \end{rem}

 	A reformulation of this system of integro-delay equations~\eqref{eq_def_muSt1}-\eqref{eq_def_bmuR} is proposed in Section~\ref{sec_PDE} 
	as a system of partial differential equations.

We also remark that   
in the special case of $\lambda_i^N(a)$ discussed in Remark \ref{rem_lamb_dec}, given the expression of $\bar\lambda(x,a)$ in \eqref{eq_wtd_lam}, we obtain the following:
	\begin{equation*}
\overline{\mfF}(t,x)
= \int_0^\infty\hcm{-0.3} \int_{\bX}  \bar \omega(x,x') \wtd\lambda(x', a')    \bar\mu_t^I(\Rd x', da')\,.
	\end{equation*}

\section{Properties of the limiting system of equations}  \label{sec:limit} 

\subsection{Existence and uniqueness of solution to the limiting equations}

\begin{prop}
	\label{prop_bmu_bF_UE}
Under Assumptions \ref{hyp-lambda} and \ref{hyp-w}, the set of equations \eqref{eq_def_muSt1}--\eqref{eq_def_bar_mfFtx} has a unique solution $(\bar\mu^{S}_{\cdot}, \overline\mfF_{\cdot})$.  
\end{prop}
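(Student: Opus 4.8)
The plan is to decouple the system. Given the force of infection $\overline\mfF$, equation \eqref{eq_def_muSt1} is a linear equation for $\bar\mu^{S}$ that is solved explicitly by \eqref{eq_def_bmuS2}, so the whole system collapses to a single fixed-point equation for $\overline\mfF$, which I then solve by a Picard iteration, uniqueness following from a Grönwall argument. Concretely, for a fixed admissible $\overline\mfF$ (càd-làg, with values in bounded measurable maps $\bX\to\RR_+$, locally bounded in $t$), the measure $m_t:=\exp\!\big(-\int_0^t\overline\mfF(s,\cdot)\,\Rd s\big)\,\bar\mu^{S}_0$ solves \eqref{eq_def_muSt1}, as one checks by testing against $\varphi\in C_b(\bX)$ and differentiating in $t$; and if $\bar\mu^{S}$ and $m$ are both $\cM(\bX)$-valued càd-làg solutions, then $\nu_t:=\bar\mu^{S}_t-m_t$ satisfies $\langle\nu_t,\varphi\rangle=-\int_0^t\langle\nu_s,\overline\mfF(s,\cdot)\varphi\rangle\,\Rd s$, whence $\|\nu_t\|_{TV}\le\|\overline\mfF\|_{\infty,[0,T]}\int_0^t\|\nu_s\|_{TV}\,\Rd s$ on $[0,T]$ and $\nu\equiv0$ by Grönwall. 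Thus \eqref{eq_def_bmuS2} is the unique solution of \eqref{eq_def_muSt1} given $\overline\mfF$, and substituting it into \eqref{eq_def_bar_mfFtx} reduces the system to $\overline\mfF=\mathcal T\overline\mfF$, where
\begin{equation*}
\mathcal T f(t,x) = \Phi(t,x) + \int_0^t\!\!\int_{\bX} \bar\omega(x,x')\,\bar\lambda(x',t-s)\,f(s,x')\,e^{-\int_0^s f(r,x')\,\Rd r}\,\bar\mu^{S}_0(\Rd x')\,\Rd s,
\end{equation*}
with $\Phi(t,x):=\int_0^\infty\!\int_{\bX}\bar\omega(x,x')\,\frac{\bar\lambda(x',a'+t)}{F^c_{x'}(a')}\,\bar\mu^{I}_0(\Rd x',\Rd a')$ and the convention $0/0=0$.

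\emph{Invariant ball, uniformly in time.} The key estimate is a bound on $\mathcal Tf$ that does not grow with the time horizon. Using the boundedness of $\bar\omega$ (Assumption~\ref{hyp-w}) and the bounds on $\bar\lambda$ (namely $\bar\lambda\le\lambda^*$ and $\bar\lambda(x,a)\le\lambda^*F^c_x(a)$, so that by monotonicity of $F^c$ the integrand $\bar\lambda(x',a'+t)/F^c_{x'}(a')$ is $\le\lambda^*$), one gets $0\le\Phi(t,x)\le\omega^*\lambda^*\,\bar\mu^{I}_0(\bX\times\RR_+)$ uniformly in $(t,x)$. For the convolution term, the pointwise identity $f(s,x')e^{-\int_0^s f(r,x')\,\Rd r}=-\partial_s e^{-\int_0^s f(r,x')\,\Rd r}$ and $\bar\lambda\le\lambda^*$ give $\int_0^t\bar\lambda(x',t-s)f(s,x')e^{-\int_0^s f\,\Rd r}\,\Rd s\le\lambda^*\big(1-e^{-\int_0^t f(r,x')\,\Rd r}\big)\le\lambda^*$, and since $\bar\mu^{S}_0(\bX)\le1$ (Assumption~\ref{hyp-initial}) we conclude $\|\mathcal Tf\|_\infty\le M_0:=\omega^*\lambda^*\big(\bar\mu^{I}_0(\bX\times\RR_+)+1\big)$ for every nonnegative $f$. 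Hence $\mathcal T$ maps the cone of nonnegative, locally bounded, càd-làg-in-$t$ functions into the ball of radius $M_0$. That $\mathcal Tf$ is again càd-làg in $t$ is checked directly: for $\Phi$ by dominated convergence (using $\int\!\int F^c_{x'}(a')^{-1}\,\bar\mu^{I}_0(\Rd x',\Rd a')<\infty$, which follows from Assumption~\ref{hyp-initial}), and for the convolution term by the substitution $u=t-s$ together with right-continuity/existence of left limits of $\bar\lambda$ and bounded convergence.

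\emph{Contraction and conclusion.} For $f,g$ nonnegative and bounded by $M_0$, the elementary inequalities $|ue^{-U}-ve^{-V}|\le|u-v|+M_0|U-V|$ (for $u,v\in[0,M_0]$, $U,V\ge0$) and $|e^{-U}-e^{-V}|\le|U-V|$ yield, on $[0,T]$,
\begin{equation*}
\|\mathcal Tf(t,\cdot)-\mathcal Tg(t,\cdot)\|_\infty \;\le\; C_T\int_0^t\|f(s,\cdot)-g(s,\cdot)\|_\infty\,\Rd s, \qquad C_T:=\omega^*\lambda^*(1+M_0T).
\end{equation*}
Grönwall's lemma gives uniqueness of the fixed point on each $[0,T]$, hence on $\RR_+$; combined with the first paragraph this yields uniqueness of the pair $(\bar\mu^{S}_{\cdot},\overline\mfF_{\cdot})$. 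For existence, the Picard iteration $f_0\equiv0$, $f_{n+1}:=\mathcal Tf_n$ stays in the ball of radius $M_0$ and, iterating the contraction estimate, satisfies $\sup_{s\le T}\|f_{n+1}(s,\cdot)-f_n(s,\cdot)\|_\infty\le \tfrac{(C_TT)^n}{n!}M_0$, so $(f_n)$ converges uniformly on $[0,T]\times\bX$ to a fixed point $\overline\mfF$ of $\mathcal T$; uniqueness makes these limits consistent across $T$, producing a global solution. Finally $\bar\mu^{S}_{\cdot}$ defined by \eqref{eq_def_bmuS2} is $\cM(\bX)$-valued, continuous (a fortiori càd-làg) for the weak topology, with total mass $\le\bar\mu^{S}_0(\bX)\le1$, so $(\bar\mu^{S}_{\cdot},\overline\mfF_{\cdot})$ lies in the admissible class.

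\emph{Main obstacle.} The argument is otherwise routine; the point requiring genuine care is the a priori bound of the second paragraph, which must be made uniform in the time horizon through the identity $f e^{-\int f}=-\partial e^{-\int f}$ — a naïve Grönwall bound only controls $\mathcal Tf$ on a short interval and would give mere local-in-time existence (still bootstrappable to a global solution, but less transparently). The secondary, more technical, bookkeeping item is the càd-làg-in-$t$ regularity of $\mathcal Tf$, which cannot be taken for granted since $\bar\lambda$ is only assumed to be càd-làg, so that $\Phi$ may genuinely have jumps.
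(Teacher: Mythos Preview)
Your proof is correct and follows essentially the same route as the paper's: both hinge on the a priori bound obtained via the identity $f\,e^{-\int f}=-\partial_s e^{-\int f}$ (so the time integral is $\le 1$), followed by a Grönwall argument for uniqueness and Picard iteration for existence. The only structural difference is that you first eliminate $\bar\mu^{S}$ via \eqref{eq_def_bmuS2} and work with a single fixed-point map $\mathcal T$ on $\overline\mfF$, whereas the paper keeps the pair and uses the combined distance $D_t=(\lambda^*\omega^*)\|\bar\mu^{S,1}_t-\bar\mu^{S,2}_t\|_{TV}+\|\overline\mfF^1(t,\cdot)-\overline\mfF^2(t,\cdot)\|_\infty$; your reduction is a harmless simplification. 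A minor remark: for the bound on $\Phi$ you invoke $\bar\lambda(x,a)\le\lambda^*F^c_x(a)$ directly from Assumption~\ref{hyp-Blambda}, which is cleaner than the paper's detour through the inequality in Assumption~\ref{hyp-initial}; note however that both proofs tacitly use more than the two assumptions listed in the proposition (you need $\bar\mu^S_0(\bX)\le 1$ and the finiteness of $\int F^c_{x'}(a')^{-1}\bar\mu^I_0$, which come from Assumption~\ref{hyp-initial}).
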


\begin{proof}
We first prove the uniqueness. Before proceeding, we establish some useful bounds. 
Suppose that $(\bar\mu^{S}_{\cdot}, \overline{\mfF}_{\cdot})$
is a solution. 
By Assumptions \ref{hyp-lambda} and \ref{hyp-w}, we have $\bar\lambda(x, t) \le \lambda^*$
and $\bar \omega(x,x')  \le \omega^*$ for some $\omega^*>0$.
Recalling \eqref{eq_def_bar_mfFtx} and Assumption~\ref{hyp-Blambda}, we derive
\begin{equation}
	\begin{split}
		\overline{\mfF}(t,x)
		&\le \lambda^* \int_{\bX}  \bar \omega(x,x')    
		\int_0^\infty \bigg[ \frac{F^c_{x'}(a'+t)}{F^c_{x'}(a')} \bigg] \bar\mu_0^I(\Rd x', \Rd a') 
		\\
		& \quad + \lambda^* \int_{\bX} \bar \omega(x,x')
		\int_0^t  \overline{\mfF}(s,x')
		\exp\bigg(-\int_0^s \overline{\mfF}(r,x') \Rd r\bigg) \Rd s\;
		\bar\mu^S_0(\Rd x')  \,
		\\&\le \lambda^* \omega^*\int_{\bX} 
		 \Big[\langle \bar\mu_0^I(\Rd x', .), \mathbbm{1}\rangle
		+ \bar\mu^S_0(\Rd x')\Big] 
		\\&
		\le \lambda^*  \omega^* < \infty,
	\end{split}
	\label{eq_apr_mfF}
\end{equation}
where we exploited that 
1 is a natural upper-bound of  the integral over $s$ in the second line
and that $\bar \mu_X$ in \eqref{eq_def_bmuX}
is a probability measure.

On the other hand 
by  \eqref{eq_def_muSt1},
\begin{equation}
	\bar\mu^{S}_s(\Rd x') \le \bar\mu^S_0(\Rd x')\le \bar\mu_X(\Rd  x).
	\label{eq_apr_muS}
\end{equation}
	Suppose now that there are two solutions  $(\bar\mu^{S, \ell}_{\cdot}, \overline{\mfF}^{\ell}_{\cdot})$, $\ell=1,2$. 
Let $D_t = \Ninf{\overline{\mfF}^{1}(t, .)-\overline{\mfF}^{2}(t, .)}$.
By \eqref{eq_def_bar_mfFtk}:
\begin{equation}
	\begin{split}
		\overline{\mfF}^{1}(t,x)  -  \overline{\mfF}^{2}(t,x) 
		&= \int_0^t\hcm{-0.2} \int_{\bX} \bar \omega(x,x') \bar\lambda(x',t-s) \overline{\mfF}^{1}(s,x') (\bar\mu^{S,1}_s(\Rd x')- \bar\mu^{S,2}_s(\Rd x')) \Rd s \\
		& \qquad  + \int_0^t\hcm{-0.2} \int_{\bX} \bar \omega(x,x') \bar\lambda(x', t-s) (\overline{\mfF}^{1}(s,x')-\overline{\mfF}^{2}(s,x') ) \bar\mu^{S,2}_s(\Rd x')\Rd s \,. 
	\end{split}
	\label{eq_diff2_mfF}
\end{equation}
Since $:a\in \RR_+\mapsto e^{-a}$ is 1-Lipschitz continuous:
\begin{equation*}
	\Big\|\exp\bigg(-\int_0^s \overline{\mfF}^{1}(r,\,.) \Rd s\bigg)
	-  \exp\bigg(-\int_0^s \overline{\mfF}^2(r,\,.) \Rd r\bigg)\Big\|_\infty
	\le \int_0^s D_r\,\Rd r\,.
\end{equation*}
We then deduce from  \eqref{eq_def_bmuS2} that:
$ \left\| \bar\mu^{S,1}_s -\bar\mu^{S,2}_s\right\|_{TV}
\le \int_0^s D_r\,\Rd r$.
By injecting this inequality in \eqref{eq_diff2_mfF}
with \eqref{eq_apr_mfF} and \eqref{eq_apr_muS},
we deduce for any $T>0$ and $t\in [0, T]$
that $D_t
\le C \int_0^t D_s \Rd s$
where $C = \lambda^* \omega^* + (\lambda^* \omega^*)^2\, T$.
Thanks to Gronwall's inequality (for any $T>0$),
the proof of the  uniqueness is concluded.
\smallskip

Finally,  existence can be proved by the following modified Picard iteration. 
We initialize with $\bar\mu^{S, 0}_s(\Rd x')
= \bar\mu^{S}_0(\Rd x')$, $\overline{\mfF}^{0}(t,x)=0$,
then for any $k\ge 0$:
\begin{align} \label{eq_prop_muStk}
	\bar\mu^{S, k+1}_t(\Rd x) = \bar\mu^{S}_0(\Rd x) - \int_0^t  \overline{\mfF}^{k}(s,x) \bar\mu^{S, k+1}_s(\Rd x) \Rd s\,,
\end{align}
and
\begin{equation}\label{eq_def_bar_mfFtk}
	\begin{split} 
		\overline{\mfF}^{k+1}(t,x) &= \int_0^\infty\hcm{-0.3} \int_{\bX}  \bar \omega(x,x') \frac{\bar\lambda(x', a'+t)}{F^c_{x'}(a')}    \bar\mu_0^I(\Rd x', da')  \\
		& \quad + \int_0^t\hcm{-0.2} \int_{\bX} \bar \omega(x,x') \bar\lambda(x',t-s) \overline{\mfF}^{k}(s,x') \bar\mu^{S, k+1}_s(\Rd x')\Rd s  \,.
	\end{split}
\end{equation}
In these two definitions, we make  the choice of the product between $\overline{\mfF}^{k}$ and $\bar\mu^{S, k+1}$ because then we have the following explicit expression of $\bar\mu^{S, k+1}$ in terms of $\overline{\mfF}^{k}$:
 \begin{align} \label{eq_def_muStk}
	\bar\mu^{S, k+1}_t(\Rd x) = 
		\exp\bigg(-\int_0^t \overline{\mfF}^k(r,x) \Rd r\bigg) 
	\bar\mu^{S}_0(\Rd x)\,,
\end{align}
similarly to \eqref{eq_def_bmuS2}.
We can thus justify similarly the a priori estimates \eqref{eq_apr_muS} and \eqref{eq_apr_mfF}  for the above approximating sequence.
Then, the arguments used for proving uniqueness 
yield the convergence of our Picard iteration, exactly as in the usual case of globally Lipschitz coefficients.
\end{proof}

\subsection{Regularity of $\overline{\mfF}(\cdot)$}
\begin{lem}\label{lem_reg_bF}
The function $:x\mapsto \overline{\mfF}(., x)$ 
with values in the set of bounded measurable functions from $\RR_+$ to itself
equipped with the uniform norm topology
is $\bar \mu_X$-a.e. continuous.
\end{lem}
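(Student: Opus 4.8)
The plan is to exploit the fact that the type $x$ enters the system \eqref{eq_def_muSt1}--\eqref{eq_def_bar_mfFtx} only through the kernel slice $\bar\omega(x,\cdot)$; recall that the solution $(\bar\mu^{S}_{\cdot}, \overline{\mfF}_{\cdot})$ exists and is unique by Proposition~\ref{prop_bmu_bF_UE}, and that $0\le\overline{\mfF}(t,x)\le\lambda^*\omega^*$ by \eqref{eq_apr_mfF}, so $\overline{\mfF}(\cdot,x)$ is a bounded measurable function for each $x$. First I would rewrite the fixed-point relation \eqref{eq_def_bar_mfFtx}, by Tonelli's theorem, as
\[
\overline{\mfF}(t,x) = \int_{\bX} \bar\omega(x,x')\,\theta_t(\Rd x'),
\qquad \theta_t := \rho_t + \nu_t,
\]
where $\rho_t(\Rd x') := \int_0^\infty \frac{\bar\lambda(x',a'+t)}{F^c_{x'}(a')}\,\bar\mu_0^I(\Rd x',\Rd a')$ and $\nu_t(\Rd x') := \int_0^t \bar\lambda(x',t-s)\,\overline{\mfF}(s,x')\,\bar\mu^S_s(\Rd x')\,\Rd s$ are finite measures on $\bX$ that do not depend on $x$. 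The crucial preliminary step is the a priori bound $\theta_t \le 2\lambda^*\,\bar\mu_X$ \emph{uniformly in} $t\ge 0$: for $\rho_t$ this is exactly the computation already carried out in \eqref{eq_apr_mfF} (using $\bar\lambda\le\lambda^*$, the identity $1+F_{x'}(a')/F^c_{x'}(a') = 1/F^c_{x'}(a')$ and Assumption~\ref{hyp-initial}), giving $\rho_t\le\lambda^*\,\bar\mu_X$; for $\nu_t$ one combines $\bar\lambda\le\lambda^*$ with the measure identity $\int_0^t \overline{\mfF}(s,x')\,\bar\mu^S_s(\Rd x')\,\Rd s = \bar\mu^S_0(\Rd x') - \bar\mu^S_t(\Rd x') \le \bar\mu^S_0(\Rd x')$ read off from \eqref{eq_def_muSt1}, giving $\nu_t\le\lambda^*\,\bar\mu^S_0\le\lambda^*\,\bar\mu_X$.

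Once this is in place the key estimate is immediate: for all $x,x_0\in\bX$,
\[
\sup_{t\ge 0}\bigl| \overline{\mfF}(t,x) - \overline{\mfF}(t,x_0)\bigr|
\le 2\lambda^* \int_{\bX}\bigl|\bar\omega(x,x') - \bar\omega(x_0,x')\bigr|\,\bar\mu_X(\Rd x'),
\]
which is uniform in $t$, so it remains only to show that the right-hand side vanishes as $x\to x_0$ for $\bar\mu_X$-a.e.\ $x_0$. Here I would use that by Assumption~\ref{hyp-w} the set of discontinuity points of $\bar\omega$ in $\bX\times\bX$ is $\bar\mu_X^{\otimes 2}$-null, so by Fubini's theorem there is a $\bar\mu_X$-null set $\mathcal{N}\subset\bX$ such that, for every $x_0\notin\mathcal{N}$, the function $\bar\omega$ is continuous at $(x_0,x')$ for $\bar\mu_X$-a.e.\ $x'$. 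Fixing such an $x_0$ and an arbitrary sequence $x_n\to x_0$ (continuity at $x_0$ may be tested sequentially since $\bX$ is metric), we then have $\bar\omega(x_n,x')\to\bar\omega(x_0,x')$ for $\bar\mu_X$-a.e.\ $x'$, while $|\bar\omega(x_n,x') - \bar\omega(x_0,x')|\le 2\omega^*$ by Lemma~\ref{lem_om_st}; dominated convergence against the probability measure $\bar\mu_X$ then gives $\int_{\bX}|\bar\omega(x_n,x') - \bar\omega(x_0,x')|\,\bar\mu_X(\Rd x')\to 0$, hence $\sup_{t\ge 0}|\overline{\mfF}(t,x_n) - \overline{\mfF}(t,x_0)|\to 0$. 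Thus every $x_0\notin\mathcal{N}$ is a continuity point of $x\mapsto\overline{\mfF}(\cdot,x)$ for the uniform norm, which is the assertion.

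The step I expect to be the main obstacle is the uniform-in-$t$ bound on $\theta_t$: a naive estimate of $\nu_t$ grows linearly in $t$, and what rescues the argument is that the ``new-infection'' density $\overline{\mfF}(s,x')\,\bar\mu^S_s(\Rd x')$ integrates in $s$ to the finite measure $\bar\mu^S_0 - \bar\mu^S_t$, together with the crude bound $\bar\lambda\le\lambda^*$ that lets the infectivity factor out of the time integral. Everything else is the standard Fubini-and-dominated-convergence device for transferring almost-everywhere regularity of a kernel through an integral transform, and it is precisely this device that makes the statement robust to the generalization from $\bX=[0,1]$ to an arbitrary Polish space with only $\bar\mu_X^{\otimes 2}$-a.e.\ continuity of $\bar\omega$.
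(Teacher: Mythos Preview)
Your proof is correct and the first half — the uniform-in-$t$ Lipschitz estimate of $\overline{\mfF}(\cdot,x)$ in terms of $\int_{\bX}|\bar\omega(x,x')-\bar\omega(x_0,x')|\,\bar\mu_X(\Rd x')$ — is essentially the paper's argument, derived the same way from \eqref{eq_def_bar_mfFtx} and the computation behind \eqref{eq_apr_mfF}. Your constant $2\lambda^*$ is slightly looser than the paper's $\lambda^*$ (the paper does not split $\theta_t$ but bounds both pieces jointly by $\bar\mu_X$ via Assumption~\ref{hyp-initial}), but this is immaterial.

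Where your route genuinely differs is in the second half. The paper packages the statement ``$x\mapsto\bar\omega(x,\cdot)$ is $\bar\mu_X$-a.e.\ continuous in $L^1(\bar\mu_X)$'' as a separate Proposition~\ref{prop_om_cont}, proved in the appendix by a moderately technical argument with level sets $\Phi[\delta,\eta]$, $\Psi[\delta,\eta]$ of a modulus of continuity and a Markov-inequality comparison between them. You bypass all of this: the discontinuity set of $\bar\omega$ is Borel (an $F_\sigma$ in the Polish product), Fubini gives a.e.\ sections of full measure, joint continuity at $(x_0,x')$ trivially implies $\bar\omega(x_n,x')\to\bar\omega(x_0,x')$ along any $x_n\to x_0$, and bounded dominated convergence closes the loop. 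Your argument is shorter and more transparent; the paper's version has the advantage of isolating the kernel-regularity statement as a reusable proposition (it is later invoked in spirit in Proposition~\ref{prop_cvg_mu0}), and its modulus-of-continuity machinery is set up so as to interface with the weak-convergence arguments needed there. One cosmetic point: the boundedness of $\bar\omega$ you use in the dominated-convergence step is stated directly in Assumption~\ref{hyp-w} rather than in Lemma~\ref{lem_om_st}, which concerns the approximants $\bar\omega^N$.
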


\begin{proof}
Recalling \eqref{eq_def_bar_mfFtx}
and Assumption~\ref{hyp-Blambda}, 
we first deduce the following inequality for any $t\ge 0$ and any $x_1, x_2\in \bX$:
\begin{multline*}
	\Big|\overline{\mfF}(t, x_1)-\overline{\mfF}(t, x_2)\Big|
\le \lambda^* \int_\bX |\bar \omega(x_1, x') - \bar \omega(x_2, x')|
\Big\{\Big(\frac{F^c_{x'}(a'+t)}{F^c_{x'}(a')}\Big) \bar\mu_0^I(\Rd x', \Rd a')
\\+  \int_0^t \overline{\mfF}(s, x') \exp\Big(-\int_0^s \overline{\mfF}(r, x') \Rd r\Big) \Rd s
\;\bar\mu_0^S(\Rd x')
\Big\}.
\end{multline*}
With similar arguments as in the proof of Proposition~\ref{prop_bmu_bF_UE}
to deduce \eqref{eq_apr_mfF},
notably with Assumption \ref{hyp-initial},
we obtain 
\begin{equation}
\Ninf{\overline{\mfF}(., x_1)-\overline{\mfF}(., x_2)}
\le \lambda^* \int_\bX |\bar \omega(x_1, x') - \bar \omega(x_2, x')|\, \bar \mu_X(\Rd x').
\label{eq_prop_DeltaF}
\end{equation}
As stated in Appendix~\ref{sec:appendix} in Lemma~\ref{lem_pair_ctn} 
and proved just afterwards,
the function $x\mapsto \bar \omega(x, .)$ is
$\bar \mu_X$ a.e. continuous 
with the $L^1(\bar \mu_X)$ distance,
since $\bar \omega$ is $\bar \mu_X ^{\otimes 2}$-a.e. continuous by virtue of Assumption \ref{hyp-w}. 
This concludes the proof of Lemma \ref{lem_reg_bF}.
\end{proof}

\subsection{Alternative representation of the limit as PDEs}\hfill\\
\label{sec_PDE}

Provided that the hazard rate function corresponding to the durations before remission
is regular enough, one can describe the solution $\bar\mu^{I}$
of the limiting system in Theorem~\ref{thm_cvg}
in terms of a PDE
as stated in the next proposition.
\begin{prop}\label{prop_PDE_eq}
	Assume that $F_x$ is absolutely continuous with density $f_x$ for each $x\in \bX$
	and that $F^c_x(a)>0$ for any $a\in \RR_+$.
	Assume that $h_x(a) = f_x(a)/F^c_x(a)$, the hazard rate function,
	is continuous and bounded  uniformly in both $x\in\bX$ and $a\in \RR_+$. 
	Assume moreover that $\bar \mu^I_0(\bX\times \{0\}) = 0$.
	Then, 
	$\bar\mu^{I}_t(\Rd x,\Rd a)$ in \eqref{eq_def_bmuI} is the unique solution to the following equation:
 for any $\psi \in C^{0,1}(\bX\times \RR_+)$ and $t>0$,
	\begin{equation} \label{eqn-barmuI-PDE-1}
		\frac{d}{dt} \langle \bar\mu^{I}_t, \psi\rangle =  \langle \bar\mu^{I}_t, \partial_a\psi- h\psi\rangle   + \int_{\bX} \psi(x,0) \overline{\mfF}(t,x) \bar\mu^S_t(\Rd x) \,. 
	\end{equation}
	Hence, $\bar\mu^{I}_t(\Rd x,\Rd a)$ is the unique solution to the following PDE:
	\begin{equation}\label{eqn-barmuI-PDE}
		\langle \partial_t \bar\mu^{I}_t, \psi\rangle  + \langle \partial_a \bar\mu^{I}_t, \psi\rangle  = -  \langle h\,\bar\mu^{I}_t, \psi\rangle 
	\end{equation} 
	with the initial condition $\bar\mu^{I}_0$ given in Assumption \ref{hyp-initial} and the boundary condition at $a=0$:  $\tilde\mu^I_t(\Rd x,0) = \overline{\mfF}(t,x) \bar\mu^S_t(\Rd x)$, where $\tilde\mu^I_t(\Rd x,a)$ is the ``density'' of $\bar\mu^I_t(\Rd x,\Rd a)$, that is, $\bar\mu^I_t(\Rd x,\Rd a) =\tilde\mu^I_t(\Rd x,a)\Rd a$ and $a\mapsto\tilde\mu^I_t(\Rd x,a) $ is continuous at $0^+$. 	
\end{prop}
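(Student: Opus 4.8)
The plan is to derive the weak-form identity \eqref{eqn-barmuI-PDE-1} directly from the explicit representation \eqref{eq_def_bmuI}, and then to argue uniqueness by the method of characteristics, which in integrated form is precisely \eqref{eq_def_bmuI} again. First I would take $\psi\in C^{0,1}(\bX\times\RR_+)$ and differentiate $t\mapsto\langle\bar\mu^I_t,\psi\rangle$ as given by the two terms in \eqref{eq_def_bmuI}. For the first term, $\int_0^\infty\int_\bX \psi(x,a+t)\frac{F^c_x(a+t)}{F^c_x(a)}\bar\mu_0^I(\Rd x,\Rd a)$, the $t$-derivative of the integrand is $\big(\partial_a\psi(x,a+t)-h_x(a+t)\psi(x,a+t)\big)\frac{F^c_x(a+t)}{F^c_x(a)}$, using $\frac{d}{da}F^c_x(a)=-f_x(a)=-h_x(a)F^c_x(a)$ and the boundedness of $h$; differentiation under the integral sign is justified since $\psi$, $\partial_a\psi$ and $h$ are bounded and $\bar\mu_0^I$ is finite. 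For the second term, $\int_0^t\int_\bX\psi(x,t-s)F^c_x(t-s)\overline{\mfF}(s,x)\bar\mu^S_s(\Rd x)\Rd s$, I would split the $t$-derivative into the boundary contribution at $s=t$, which is $\int_\bX\psi(x,0)F^c_x(0)\overline{\mfF}(t,x)\bar\mu^S_t(\Rd x)=\int_\bX\psi(x,0)\overline{\mfF}(t,x)\bar\mu^S_t(\Rd x)$ since $F^c_x(0)=1$, plus the interior contribution $\int_0^t\int_\bX\big(\partial_a\psi(x,t-s)-h_x(t-s)\psi(x,t-s)\big)F^c_x(t-s)\overline{\mfF}(s,x)\bar\mu^S_s(\Rd x)\Rd s$. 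Collecting the interior terms from both contributions reconstitutes exactly $\langle\bar\mu^I_t,\partial_a\psi-h\psi\rangle$ by \eqref{eq_def_bmuI} applied with test function $\partial_a\psi-h\psi$ (which lies in $C_b(\bX\times\RR_+)$), giving \eqref{eqn-barmuI-PDE-1}. The hypothesis $\bar\mu_0^I(\bX\times\{0\})=0$ is what guarantees that no atom of the initial condition sits at $a=0$ and hence that the $a=0$ boundary behaviour is entirely governed by $\overline{\mfF}(t,x)\bar\mu^S_t(\Rd x)$; the continuity and boundedness of $h$ ensure $\langle\bar\mu^I_t,h\psi\rangle$ is well-defined and that the map $t\mapsto\langle\bar\mu^I_t,\psi\rangle$ is $C^1$.

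Next, to pass from \eqref{eqn-barmuI-PDE-1} to the distributional PDE \eqref{eqn-barmuI-PDE} with the stated boundary condition, I would test against $\psi\in C^{0,1}$ that is compactly supported in $a>0$ (so $\psi(x,0)=0$), which removes the boundary term and yields $\frac{d}{dt}\langle\bar\mu^I_t,\psi\rangle=\langle\bar\mu^I_t,\partial_a\psi\rangle-\langle\bar\mu^I_t,h\psi\rangle$, i.e. $\langle\partial_t\bar\mu^I_t,\psi\rangle+\langle\partial_a\bar\mu^I_t,\psi\rangle=-\langle\bar\mu^I_t,h\psi\rangle$ in the sense of distributions on $\bX\times(0,\infty)$. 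To read off the boundary value at $a=0$, I would use that under the regularity of $h$ the second term in \eqref{eq_def_bmuI} has, for fixed $t$, a density in $a$ near $0^+$ equal to $\overline{\mfF}(t-a^-,x)\to\overline{\mfF}(t,x)$ as $a\to0^+$ when $\overline{\mfF}(\cdot,x)$ is continuous (or more carefully, the density of $\bar\mu^I_t$ in $a$ is $\tilde\mu^I_t(\Rd x,a)=\frac{F^c_x(a)}{F^c_x(\cdot)}$-weighted contributions whose limit as $a\downarrow0$ is $\overline{\mfF}(t,x)\bar\mu^S_t(\Rd x)$, the initial-condition part contributing nothing by $\bar\mu_0^I(\bX\times\{0\})=0$). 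This identifies $\tilde\mu^I_t(\Rd x,0)=\overline{\mfF}(t,x)\bar\mu^S_t(\Rd x)$ and the right-continuity at $0^+$.

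For uniqueness, I would argue that any measure-valued solution of \eqref{eqn-barmuI-PDE-1} (with the fixed initial datum $\bar\mu_0^I$, and with $\overline{\mfF}$ and $\bar\mu^S$ the already-determined quantities from Proposition~\ref{prop_bmu_bF_UE}) must coincide with \eqref{eq_def_bmuI}. The cleanest route is the method of characteristics: for each fixed $T>0$ and each $\phi\in C_b(\bX\times\RR_+)$, choose the test function $\psi(x,a)=\psi^{T,\phi}(t,x,a)$ solving the backward transport equation $\partial_t\psi+\partial_a\psi=h\psi$ on $[0,T]$ with terminal data $\psi(T,x,a)=\phi(x,a)$, explicitly $\psi(t,x,a)=\phi(x,a+T-t)\frac{F^c_x(a+T-t)}{F^c_x(a)}$ (well-defined and $C^{0,1}$ in $(x,a)$ for the chosen regularity, with $a$-derivative bounded using boundedness of $h$ and of $\partial_a\phi$ — or one first proves it for $\phi\in C^{0,1}$ and extends by density). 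Feeding this time-dependent test function into \eqref{eqn-barmuI-PDE-1} makes the bulk term cancel, leaving $\frac{d}{dt}\langle\bar\mu^I_t,\psi(t,\cdot)\rangle=\int_\bX\psi(t,x,0)\overline{\mfF}(t,x)\bar\mu^S_t(\Rd x)$; integrating from $0$ to $T$ recovers \eqref{eq_def_bmuI} evaluated at $T$ against $\phi$, so the solution is unique.

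The main obstacle I anticipate is the rigorous handling of the $a=0$ boundary behaviour and the differentiation-under-the-integral arguments when $F_x$, $f_x$ and $h_x$ are only assumed measurable in $x$ (continuous and bounded in $a$, bounded uniformly): one must be careful that the dominated-convergence and Leibniz-rule steps are uniform enough across $x$ for the finite measures $\bar\mu_0^I$, $\bar\mu_0^S$, and that the "density at $0^+$" statement — i.e. the continuity of $a\mapsto\tilde\mu^I_t(\Rd x,a)$ at $0^+$ with the claimed limit — genuinely follows from the continuity of $\overline{\mfF}(\cdot,x)$ (which in turn uses \eqref{eq_def_bar_mfFtx} and the boundedness just as in \eqref{eq_apr_mfF}) rather than needing extra hypotheses. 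The use of a time-dependent test function in \eqref{eqn-barmuI-PDE-1} also requires a small justification (approximation of $t\mapsto\psi(t,\cdot)$ by piecewise-constant-in-time test functions, then passing to the limit), which is routine but should be spelled out.
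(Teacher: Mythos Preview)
Your proposal is correct and follows essentially the same approach as the paper: both differentiate the explicit representation \eqref{eq_def_bmuI} term by term to obtain \eqref{eqn-barmuI-PDE-1}, and both prove uniqueness via the method of characteristics, constructing a test function that solves the backward transport equation $\partial_t\psi+\partial_a\psi=h\psi$ so that the bulk terms cancel and the explicit formula is recovered. The only presentational differences are that the paper extracts the boundary value $\tilde\mu^I_t(\Rd x,0)$ by testing the integrated equation against the explicit sequence $\psi_n(x,a)=\varsigma(x)(1-na)^+$ rather than reading it off the density directly, and the paper writes its characteristics test function $\Psi_t$ piecewise in the regions $a>t$ and $a\le t$ (parametrized by initial and boundary data $\psi_0,\varphi_0$) whereas your single formula $\psi(t,x,a)=\phi(x,a+T-t)\,F^c_x(a+T-t)/F^c_x(a)$ handles both regions at once.
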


Recalling \eqref{eq_def_bmuS2},
remark that we can also write the last term in \eqref{eqn-barmuI-PDE-1} as 
\[
\int_{\bX} \psi(x,0) \overline{\mfF}(t,x)  \exp\Big(-\int_0^t  \overline{\mfF}(s,x) \Rd s\Big) \bar\mu^{S}_0(\Rd x)\,. 
\]
which only involves the initial $\bar\mu^{S}_0(\Rd x)$ and $\overline{\mfF}(t,x)$. 

We directly identify from Proposition~\ref{prop_PDE_eq}
	and \eqref{eq_prop_bar_mfFtx}
	that the system of equations under consideration
	is a weak formulation of the following system.
	Here, $u^I_t(x,a)$ (resp. $u^S_t(x)$) 
	represent the density of $\bar \mu_t^I$ (resp. $\bar \mu_t^S$)
with respect to $\bar \mu_X(\Rd x)\, \Rd a$
(resp. $\bar \mu_X(\Rd x)$):
\begin{equation}\label{eq_lim_PDE_syst}
	\left\{
\begin{aligned}
&\partial_t u^I_t(x,a) + \partial_a u^I_t(x,a)
= -h(x, a)\, u^I_t(x,a)\,,
\\&u^I_t(x, 0) = - \partial_t u^S_t(x)
=\overline{\mfF}(t,x)\, u^S_t(x)\,,
\\&\overline{\mfF}(t,x)
= \int_0^\infty\hcm{-0.3} \int_{\bX}  \bar \omega(x,x') \frac{\bar\lambda(x', a')}{F^c_{x'}(a')}  
u^I_t(x',a')\, \bar \mu_X(\Rd x')\, \Rd a'\,.
\end{aligned}
\right.
\end{equation}
We can associate to this system the density process $u^R$ of recovered individual, defined by
$u^R_t(x) = 1 - u^S_t(x) - \int_0^\infty u^I_t(x, a)\, \Rd a$  and solution to $\partial_t u^R_t(x) = \int_0^\infty h(x, a)\, u^I_t(x,a) \Rd a$.
\\
While the uniqueness of the system in  Proposition~\ref{prop_PDE_eq} entails the one of the above system (by defining 
$\bar\mu^S_t(\Rd x) =  u^I_t(x, a)\, \bar \mu_X(\Rd x)$
and $\bar\mu^I_t(\Rd x, \Rd a)
= u^I_t(x, a)\, \bar \mu_X(\Rd x)\, \Rd a$), 
the regularity of such solutions and in what stronger sense the system \eqref{eq_lim_PDE_syst} is satisfied is left open.

\begin{proof}
	By taking derivative with respect to $t$ in the expression of $ \langle \bar\mu^{I}_t, \psi\rangle$ in \eqref{eq_def_bmuI},   we obtain
	\begin{align*}
		\frac{\Rd}{\Rd t} \langle \bar\mu^{I}_t, \psi\rangle  &= \int_0^\infty\hcm{-0.3} \int_{\bX} \partial_a \psi(x, a+t)  \frac{F^c_x(a+t)}{F^c_x(a)} \bar\mu_0^I(\Rd x,\Rd a) \\
		& \quad - \int_0^\infty\hcm{-0.3} \int_{\bX} \psi(x, a+t)  \frac{f_x(a+t)}{F^c_x(a)} \bar\mu_0^I(\Rd x,\Rd a) \\ 
		& \quad + \int_{\bX} \psi(x,0) \overline{\mfF}(t,x) \bar\mu^S_t(\Rd x)  \\
		& \quad - \int_0^t\hcm{-0.2} \int_{\bX} \psi(x,t-s)  f_x(t-s)\overline{\mfF}(s,x) \bar\mu^S_s(\Rd x)\Rd s  \\
		& \quad + \int_0^t\hcm{-0.2} \int_{\bX} \partial_a \psi(x,t-s)  F^c_x(t-s)\overline{\mfF}(s,x) \bar\mu^S_s(\Rd x)\Rd s  \,.
	\end{align*}
	
	Next,  from \eqref{eq_def_bmuI}, we observe that
	\begin{align*}
		\langle \bar\mu^{I}_t, \partial_a \psi\rangle  &=  \int_0^\infty\hcm{-0.3} \int_{\bX} \partial_a \psi(x, a+t)  \frac{F^c_x(a+t)}{F^c_x(a)} \bar\mu_0^I(\Rd x,\Rd a) \\
		& \quad + \int_0^t\hcm{-0.2} \int_{\bX} \partial_a \psi(x,t-s) F^c_x(t-s) \overline{\mfF}(s,x) \bar\mu^S_s(\Rd x)\Rd s\,,
	\end{align*}
	and
	\begin{align*}
		\langle \bar\mu^{I}_t, h\psi\rangle  
		&= \int_0^\infty\hcm{-0.3} \int_{\bX} \psi(x, a+t)  \frac{f_x(a+t)}{F^c_x(a)} \bar\mu_0^I(\Rd x,\Rd a) \\
		& \quad + \int_0^t\hcm{-0.2} \int_{\bX} \psi(x,t-s)  f_x(t-s)\overline{\mfF}(s,x) \bar\mu^S_s(\Rd x)\Rd s \,. 
	\end{align*}
	Hence, the last three identities lead to the expression in \eqref{eqn-barmuI-PDE-1}.	
	\medskip
	
Next, \eqref{eq_def_bmuI}
entails the following formula 
\begin{equation}\label{eq_prop_muI}
	\begin{split}
		\langle \bar\mu^I_t,\psi\rangle 
		&=\int_t^\infty\int_\bX \psi(x, a)\frac{F^c_x(a)}{F^c_x(a-t)}\bar\mu^I_0(\Rd x,\Rd a-t)\\
		&\quad+\int_0^t\int_\bX \psi(x, a)F^c_x(a)\overline\mfF(t-a,x)\bar{\mu}^S_{t-a}(\Rd x)\Rd a\,. 
	\end{split}
\end{equation}
We see from this formula that the restriction of the measure $\bar\mu^I_t$ to the set $\bX\times[0,t)$ is absolutely continuous w.r.t. the measure 
$\bar{\mu}^S_{t-a}(\Rd x)\Rd a$, hence the existence of the ``density'' $\tilde{\mu}^I_s(\Rd x,a)$, whose value at $a=0$ is specified above
by the boundary condition. 
\medskip

	Now integrating  \eqref{eqn-barmuI-PDE-1} over the interval $[0,t]$, we obtain 
	\begin{equation}\label{integPDE}
		\langle \bar\mu^I_t,\psi\rangle=\langle\bar\mu^I_0,\psi\rangle+ \int_0^t\langle \bar\mu^I_s,\partial_a\psi-h\psi \rangle ds+\int_0^t\int_\bX \psi(x,0)\overline\mfF(s,x)\bar{\mu}^S_s(\Rd x) \Rd s\,.
	\end{equation}
	We now choose in \eqref{integPDE} $\psi_n(x,a)=\varsigma(x)(1-na)^+$ for some $\varsigma\in C^1(\RR_+)$. Since $ \bar\mu^I_0(\bX\times\{0\})=0$,
	we deduce from \eqref{eq_prop_muI}
	that $ \bar\mu^I_t(\bX\times\{0\})=0$ holds for any $t\ge 0$. We have 
\[\langle \bar\mu^I_t,\psi_n\rangle \to0\quad \text{and }
	\int_0^t\langle \bar\mu^I_s,h\psi_n \rangle \Rd s\to0,\ \text{ as }n\to\infty. \]
	Also $\psi_n(x,0)=\varsigma(x)$, while
	\[\int_0^t\langle \bar\mu^I_s,\partial_a\psi_n \rangle \Rd s=-n\int_0^{1/n}\int_\bX \varsigma(x)\bar{\mu}^I_s(\Rd x,\Rd a)\,.\]
	We deduce from the above computations that, as $n\to\infty$,
	\[ n \int_0^t \Rd s\int_0^{1/n}\bar{\mu}^I_s(\Rd x,\Rd a)
	\Rightarrow\int_0^t\overline\mfF(s,x)\bar{\mu}^S_s(\Rd x)\Rd s,\]
	in the sense of weak convergence of measures on $\bX$. If we denote by $\tilde{\mu}^I_t(\Rd x,0)$ the limit as $n\to\infty$ of $n\int_0^{1/n}\bar{\mu}^I_s(\Rd x,\Rd a)$, we have 
	\begin{align*}
		\int_0^t\tilde{\mu}^I_s(\Rd x,0)\Rd s&=\int_0^t\overline\mfF(s,x)\bar{\mu}^S_s(\Rd x)\Rd s,\ \text{ hence also}\\
		\tilde{\mu}^I_s(\Rd x,0)&= \overline\mfF(s,x)\bar{\mu}^S_s(\Rd x)\,.
	\end{align*}
	From this, by the integration by parts formula, we obtain 
	\begin{equation} \label{eqn-IntByParts}
		\int_0^t \langle \bar\mu^I_s,\partial_a\psi \rangle \Rd s+\int_0^t\int_\bX \psi(x,0)\overline\mfF(s,x)\bar{\mu}^S_s(\Rd x)\Rd s=-\int_0^t\langle \partial_a\bar\mu^I_s,\psi\rangle \Rd s. 
	\end{equation}
	Hence, from \eqref{eqn-barmuI-PDE-1}  and  \eqref{eqn-IntByParts}, we obtain the PDE model in  \eqref{eqn-barmuI-PDE} with the boundary condition at $a=0$: $\tilde{\mu}^I_s(\Rd x,0)= \overline\mfF(s,x)\bar{\mu}^S_s(\Rd x)$.
	\medskip

	We now prove uniqueness.  
	The argument which follows is based on the characteristics of the equation,
	as in \cite[Section~3.2]{Ev10},
	yet adapted for measure-valued solutions in a way reminiscent of duality approaches as in \cite[Section 5.5]{Daw94}.
	
Let us consider in addition to $\bar \mu^I$
	any arbitrary solution $(\wtd \mu_t^I)$ to the PDE in \eqref{eqn-barmuI-PDE-1}
	that satisfies the initial condition $\wtd \mu_0^I = \bar \mu^I_0$,
	and define $\Delta \mu^I(\Rd x) = \wtd \mu^I(\Rd x) - \bar \mu^I(\Rd x)$.
	For any $t>0$,
	we define as follows the function $\Psi_t\in  C^{0, 1}(\bX\times \RR_+)$
	in terms of $\psi_0\in C^{0, 1}(\bX\times \RR_+)$
	and $\varphi_0 \in C^{0, 1}(\bX\times [0, t])$:
	\begin{equation}
		\Psi_t(x, a)
		=\begin{cases}
			\psi_0(x, a-t)\cdot \exp\Big[\int_{a-t}^a h(x, a')\Rd a'\Big]
			\qquad \text{ for any } x\in \bX, a\in (t, \infty),
			\\
			\varphi_0(x, t-a)\cdot \exp\Big[\int_{0}^a h(x, a')\Rd a'\Big]
			\qquad  \text{ for any } x\in \bX, a\in [0, t].
		\end{cases}
		\label{eq_def_Psi_t}
	\end{equation}
The interest of this definition lies in the relation 
	between the time-derivatives in $t$ and in $a$, 
	that makes the process $\langle \Delta \mu^I_t, \Psi_t\rangle$ 
stay constant as stated next in \eqref{eq_comp_DmuI_equal}.

	Given that $h$ is a bounded continuous function,
	$(\Psi_t(x,a))_{t, x, a} \in \cC^{1, 0, 1}(\RR_+\times\bX\times  \RR_+)$. 
	Let us compute the relevant partial derivatives for our concern,
	first for any $x\in \bX$ and any $a\in (t, \infty)$, 
	\begin{align}
		\label{eq_cmp_ppsi1}
			\partial_t \Psi_t(x, a)
			&=  - \partial_a \psi_0(x, a-t)  \exp\Big[\int_{a-t}^a h(x, a')\Rd a'\Big]
			+ \Psi_t(x, a)\cdot h(x, a-t),
			\\
			\partial_a \Psi_t(x, a)
			&=   \partial_a \psi_0(x, a-t)\;  \exp\Big[\int_{a-t}^a h(x, a')\Rd a'\Big]
			+ \Psi_t(x, a)\cdot \big[h(x, a) - h(x, a-t)\big].
\notag
	\end{align}
	On the other hand, for any $x\in \bX$ and any $a\in [0, t]$, 
	\begin{equation}
		\begin{split}
			\partial_t \Psi_t(x, a)
			&=   \partial_a \varphi_0(x, t-a)  \exp\Big[\int_{0}^a h(x, a')\Rd a'\Big],
			\\
			\partial_a \Psi_t(x, a)
			&=  - \partial_a \psi_0(x, t-a)  \exp\Big[\int_{0}^a h(x, a')\Rd a'\Big]
			+ \Psi_t(x, a)\cdot h(x, a).
		\end{split}
		\label{eq_cmp_ppsi2}
	\end{equation}
	Thanks to \eqref{eq_cmp_ppsi1} and \eqref{eq_cmp_ppsi2}, we obtain 
	\begin{equation*}
		\partial_t \Psi_t + \partial_a \Psi_t - h\cdot \Psi_t
		\equiv 0.
	\end{equation*}
	Since  $\wtd \mu^I, \bar \mu^I$ are solutions to the PDE in \eqref{eqn-barmuI-PDE-1} 
	and satisfy the same initial condition $\wtd \mu_0^I = \bar \mu^I_0$,
	the above identity implies the following one:
	\begin{equation}
		\langle \Delta \mu^I_t, \Psi_t\rangle
		= \langle \Delta \mu^I_0, \psi_0\rangle
		= 0,
		\label{eq_comp_DmuI_equal}
	\end{equation}
	which, for any $t>0$, holds for any $\psi_0, \varphi_0\in \cC^{0, 1}(\bX\times \RR_+)$.
	
	We next verify that 
	for any $\psi\in \cC^{0, 1}(\bX\times \RR_+)$,
	we can choose $\psi_0$ and $\varphi_0$ such that $\Psi$
	given by \eqref{eq_def_Psi_t} satisfies $\Psi_T(x, a) = \psi(x, a)$.
	Since $h(x, a) = -\partial_a \log F_x^c(a)$,
	we know that $\exp\Big[-\int_{0}^a h(x, a')\Rd a'\Big] =  F_x^c(a)>0$.
	Let $T>0$ and $\psi\in \cC^{0, 1}(\bX\times \RR_+)$.
	For $\psi$ to agree with $\Psi_T$, we define for any $x\in \bX$,
	any  $a\in \RR_+$ and any $r\in [0, T]$:
	\begin{equation*}
		\begin{split}
			\psi_0(x, a) &= \psi(x, a+T)\cdot \frac{F_x^c(a+T)}{F_x^c(a)},
			\qquad
			\varphi_0(x, r)
			=  \frac{\psi(x, T-r)}{F_x^c(T-r)}.
		\end{split}
	\end{equation*}
	We then check,
	for any $a>T$, that $\psi_0(x, a-T)\cdot \exp\Big[\int_{a-T}^a h(x, a')\Rd a'\Big]$
	agrees with $\psi(x, a)$
	and similarly for any $a\in [0, T]$ with $\varphi_0(x, T-a)\cdot \exp\Big[\int_{0}^a h(x, a')\Rd a'\Big]$ instead.
	Thanks to~\eqref{eq_comp_DmuI_equal},
	with the specific choice of $t= T$, 
	we deduce that $\langle \wtd \mu_T^I, \psi\rangle
	= \langle \bar \mu_T^I, \psi\rangle$
	holds for any $\psi\in \cC^{0, 1}(\bX\times \RR_+)$.
	Since this identity is valid for any $T$ provided $h$ is bounded continuous,
	the uniqueness of the solution to the PDE in \eqref{eqn-barmuI-PDE-1} is deduced. 
	
Finally,
	if $(\wtd \mu_t^I)$ is instead assumed to be any solution to the PDE in \eqref{eqn-barmuI-PDE},
	with the boundary condition at $a=0$
	as specified in Proposition~\ref{prop_PDE_eq}.
Then, the integration by parts formula in \eqref{eqn-IntByParts} 
holds with $\wtd \mu^I$
instead of $\bar \mu^I$,
and so \eqref{integPDE}
similarly for any $t>0$.
Therefore, $\wtd \mu^I$
is actually solution to the PDE in \eqref{eqn-barmuI-PDE-1}.
So it coincides with $\bar \mu^I$
by the preceding uniqueness result.
\end{proof}

\medskip

\section{Proof of Theorem~\ref{thm_cvg}, our main result} \label{sec:proofs}

\subsection{Convergence of $(\bar \mu^{S, N}_{\cdot}, \overline{\mfF}^N(\cdot))$}

We start by constructing an auxiliary model using the limit $\overline{\mfF}$. Recall $ D^N_i (t) $ in \eqref{eq_def_DiN}. 
Define
\begin{equation}
	 \wtd D^N_i (t) 
 =   \int_0^t \int_0^\infty \idc{\wtd D^N_i(s^-)=0} 
 \idc{u \le \overline \mfF(s, X^N_i)} Q_i(\Rd s,\Rd u)\,,
\label{eq_def_wtdDNi}
\end{equation}
and 
\begin{equation}
\langle 	\wtd \mu^{S,N}_t, \varphi\rangle   = \frac{1}{N}\sum_{i\in \cS^N(0)} \idc{\wtd D^N_i(t)=0}\varphi(X_i^N)\,.
\label{eq_def_tmuS}
\end{equation}

Considering this approximation considerably helps to justify the proximity 
with the limiting measure $\bar \mu^S$, as we can see from the next Lemma~\ref{lem_cvg_wmu_S}.
\begin{lem}\label{lem_cvg_wmu_S}
As $N\to\infty$,
the following convergence holds  in probability 
for any $t$ and any bounded continuous function $\varphi$ from $\bX$ to $\RR$:
\[
\langle	\wtd{\mu}^{S,N}_t, \varphi\rangle 
\to \langle	\bar\mu^S_t, \varphi\rangle,
\]
where the limit $ \bar\mu^S$ is the first term of the unique solution of  \eqref{eq_def_muSt1}-\eqref{eq_def_bar_mfFtx}.
\end{lem}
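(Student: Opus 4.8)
The plan is to exploit the fact that in this auxiliary model the infection events of distinct individuals are \emph{decoupled} once we condition on $\cF_0^N$. Indeed, each process $\wtd D^N_i$ depends on $\cF_0^N$ only through the characteristic $X^N_i$ and otherwise only on the Poisson random measure $Q_i$; since the $(Q_j)$ are globally independent and independent of $\cF_0^N$, the variables $(\wtd D^N_i(t))_{i\in \cS^N(0)}$ are mutually independent conditionally on $\cF_0^N$. Since $\wtd D^N_i$ jumps from $0$ to $1$ with conditionally deterministic intensity $\overline{\mfF}(s, X^N_i)$, this yields
\begin{equation*}
\bE_0^N\big[\idc{\wtd D^N_i(t^-)=0}\big] = \exp\!\Big(-\int_0^t \overline{\mfF}(s, X^N_i)\,\Rd s\Big) =: g_t(X^N_i),
\end{equation*}
and hence $\bE_0^N\big[\langle\wtd\mu^{S,N}_t,\varphi\rangle\big] = \langle\bar\mu^{S,N}_0,\; \varphi\, g_t\rangle$, where $g_t$ takes values in $[0,1]$ because $\overline{\mfF}\ge 0$.

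First I would control the conditional fluctuations: $\langle\wtd\mu^{S,N}_t,\varphi\rangle$ is an average of $N$ conditionally independent terms each bounded by $\Ninf{\varphi}/N$ in absolute value, so its conditional variance is at most $\Ninf{\varphi}^2/N$, and a conditional Chebyshev inequality (then taking expectations, the bound being deterministic) gives
\begin{equation*}
\langle\wtd\mu^{S,N}_t,\varphi\rangle - \langle\bar\mu^{S,N}_0,\; \varphi\, g_t\rangle \longrightarrow 0 \quad\text{in probability as } N\to\infty.
\end{equation*}

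Next I would identify the limit of $\langle\bar\mu^{S,N}_0,\varphi\, g_t\rangle$. The point is that $x\mapsto g_t(x)$ is bounded and, as a consequence of Lemma~\ref{lem_reg_bF}, $\bar\mu_X$-a.e. continuous: at a continuity point of $x\mapsto \overline{\mfF}(\cdot,x)$ for the uniform norm, $x_n\to x$ forces $\int_0^t \overline{\mfF}(s,x_n)\,\Rd s \to \int_0^t \overline{\mfF}(s,x)\,\Rd s$, hence $g_t(x_n)\to g_t(x)$. Therefore $\varphi\, g_t$ is bounded and $\bar\mu_X$-a.e. continuous, hence also $\bar\mu^S_0$-a.e. continuous since $\bar\mu^S_0\le \bar\mu_X$ by \eqref{eq_def_bmuX}. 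Since $\bar\mu^{S,N}_0\to\bar\mu^S_0$ weakly in probability towards the deterministic measure $\bar\mu^S_0$ (Assumption~\ref{hyp-initial}), the generic convergence result of Appendix~\ref{sec:appendix} — or, alternatively, a subsequence argument combined with the Portmanteau theorem — gives $\langle\bar\mu^{S,N}_0,\varphi\, g_t\rangle\to\langle\bar\mu^S_0,\varphi\, g_t\rangle$ in probability; by \eqref{eq_def_bmuS2} the right-hand side is exactly $\langle\bar\mu^S_t,\varphi\rangle$. Combining this with the previous display concludes the proof.

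The step I expect to be the main obstacle is this last one: because $\overline{\mfF}(\cdot,x)$ is only $\bar\mu_X$-a.e.\ continuous, the relevant test function $\varphi\, g_t$ is not continuous, so the convergence cannot be read off directly from the definition of weak convergence and instead requires the general statement proved in the appendix, namely convergence of integrals against bounded functions that are continuous almost everywhere with respect to the (random-measure) weak limit.
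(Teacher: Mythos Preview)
Your proof is correct and follows essentially the same approach as the paper: compute the conditional expectation $\bE_0^N[\langle\wtd\mu^{S,N}_t,\varphi\rangle]=\langle\bar\mu^{S,N}_0,\varphi\,g_t\rangle$, pass to the limit via the $\bar\mu_X$-a.e.\ continuity of $g_t$ (deduced from Lemma~\ref{lem_reg_bF}) together with the Portmanteau theorem, and control the fluctuations by the conditional variance bound $\Ninf{\varphi}^2/N$ and Chebyshev's inequality. The paper invokes Portmanteau directly rather than the appendix proposition, but otherwise the argument is identical.
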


\begin{proof}
We start with the formula \eqref{eq_def_tmuS}.
Noting that
\[
\PR_0^N(\wtd D^N_i(t)=0)=\exp\left(-\int_0^t \overline \mfF(s, X^N_i)\Rd s\right),
\]
we deduce
\begin{equation}
	\bE_0^N\Big[\langle	\wtd{\mu}^{S,N}_t, \varphi\rangle\Big]
	= \textstyle\langle	\bar\mu^{S,N}_0, \varphi\cdot \exp\big(-\int_0^t \overline \mfF(s, .)\Rd s\big)\rangle.
	\label{eq_exp_wmu_SN}
\end{equation}
Recall from Assumption \ref{hyp-initial} that $\bar\mu^{S,N}_0$ converges in probability to $\bar \mu^S_0\le \bar \mu_X$,
and from Lemma \ref{lem_reg_bF} that the (deterministic) function 
$\varphi\cdot \exp\big(-\int_0^t \overline \mfF(s, .) \Rd s\big)$
is $\bar \mu_X$-a.e. continuous and bounded.
Thanks to the Portmanteau theorem, 
we thus deduce the convergence in probability of the expectation in \eqref{eq_exp_wmu_SN} to 
\begin{equation}\label{eq_lim_cvg_muS}
	\textstyle\langle	\bar\mu^{S}_0, \varphi\cdot \exp\big(-\int_0^t \overline \mfF(s, .)\Rd s\big)\rangle
	= \langle	\bar\mu^{S}_t, \varphi\rangle.
\end{equation}
On the other hand, we control the fluctuations through the variance,
by exploiting the independence property of $\wtd D_i^N$ between individuals $i$. 
Similarly as $\bE_0^N$ denotes the expectation conditional on $\cF_0^N$,
$\Var_0^N$ denotes the variance conditional on $\cF_0^N$,
in the sense that $\Var_0^N(Z) = \bE_0^N[Z^2] - \bE_0^N[Z]^2$ for any random variable $Z$.
We have 
\begin{equation}
	\Var_0^N(\langle	\wtd{\mu}^{S,N}_t, \varphi\rangle)
	= \frac1{N^2} \sum_{i\in \cS^N(0)} \varphi(X_i^N)^2\cdot \Var_0^N\Big(\idc{\wtd D_i^N(t) = 0}\Big)
	\le \frac{\Ninf{\varphi}^2}{N}.
\end{equation}	
Let $\eps>0$.
By choosing $N$ larger than $\eps^{-3}\cdot \Ninf{\varphi}^2$,
we deduce as a consequence of Chebyshev's inequality:
\begin{equation*}
\PR\Big(\big| \langle	\wtd{\mu}^{S,N}_t, \varphi\rangle 
- \bE_0^N\big[\langle	\wtd{\mu}^{S,N}_t, \varphi\rangle\big]
\big|\ge \eps \Big)
\le \eps.
\end{equation*}
So this sequence of centered random variables converges in probability to 0.
With the convergence in probability of the conditional expectation
	stated in \eqref{eq_lim_cvg_muS}, 
this concludes the proof of Lemma~\ref{lem_cvg_wmu_S}.
\end{proof}

In order to relate $\widetilde \mu^{S, N}$ to our original process $\bar \mu^{S, N}$,
we will study the convergence of the following quantity, notably with the forthcoming Proposition \ref{prop_cvg_DN}:
\begin{equation}\label{eq_def_mfD}
\mfD(t)
:=	\bE_0^N\Bigg[\frac1N \sum_{i\in \cS^N(0)}\sup_{r \in[0,t]}|D^N_i(r) - \wtd D^N_i(r)|\Bigg]
\,,
\end{equation}
We relate this convergence to the 
differences $\overline\mfF_i^N(s) - \overline\mfF(s, X_i^N)$
for $s\ge 0$ and $i\in \cS^N(0)$.
We deduce from \eqref{eq_def_mfFiN} and \eqref{eq_def_bar_mfFtx}
that those differences can then be decomposed into seven terms,
by exploiting the following definitions.

We first define 
\begin{equation}
	\overline \mfA_i^N(t)
	=  \sum_{j\in \cS^N(0)} \omega^N(i, j)\cdot \big[\lambda^N_j(t -\tau_j^N) 
	- \lambda^N_j(t - \wtd \tau_j^N)\big],
	\label{eq_def_cAiNt}
\end{equation}
where $\wtd \tau_j^N$ is the jump time 
of the process $(\wtd D^N_j (s))_{s>0}$ in 
\eqref{eq_def_wtdDNi},
so that $\overline \mfA_i^N(t)$ captures the discrepancies
between the jump of $(D^N_j)$ and that of $(\wtd D^N_j)$.

\begin{rem}
	Similarly to $\sD^N(t)$ in \eqref{eq_def_DN}, we can define $\wtd \sD^N(t)$ accordingly to $(\wtd \tau_j^N)$:
\begin{equation}
\wtd \sD^N(t)
= \big\{ j\in \cS^{N}_0;\, \wtd \tau_j^N \le t\big\}\,,
\label{eq_def_wDN}
\end{equation}
that is the subset of individuals infected
by time $t$,
while being affected by the mean-field infection rate.
For any $j\in \wtd \sD^N(t)$,
$\wtd A_j^N(t) = t - \wtd \tau_j^N$
can be interpreted as the corresponding infection age,
 of individual $j$  at time $t$, 
 like $A_j^N(t) = t - \tau_j^N$ for any $j\in \sD^N(t)$.
Possibly $\wtd A_j^N(t) > \eta_j^N$, thus the individual $j$ has recovered by time $t$ 
and $\lambda^N_j(t - \wtd \tau_j^N) = 0$.
 For any $j\in \cS^{N}_0\setminus \wtd \sD^N(t)$ on the other hand,
 $t - \wtd \tau_j^N<0$, which entails also $\lambda^N_j(t - \wtd \tau_j^N) = 0$.
 Similar observations hold for the value of $\lambda^N_j(t -\tau_j^N)$
 depending on whether $j\in \sD^N(t)$ or not, and if yes whether $A_j^N(t) > \eta_j^N$ holds or not.
It justifies the statement 
that $\overline \mfA_i^N(t)$ corresponds to the component  
due to the discrepancies between infection ages.
\end{rem}

We next define
\begin{multline}
	\overline \mfV_i^{N, 1}(t)
	=  \frac{1}{N}\sum_{j\in \cS^N(0)}   
	\Big[N \omega^N(i, j)\cdot\lambda^N_j(t-\wtd \tau_j^N) 
	- \bar\omega^N(X_i^N, X_j^N)\cdot 	\bE_0^N[\bar\lambda(X_j^N, t-\wtd \tau_j^N)]\Big],
	\label{eq_def_cViNt}
\end{multline}
so that $\overline \mfV_i^{N, 1}$ concerns the approximation of
the transmission rate by its average,
where the expectation in the last term averages the randomness of the infection time $\wtd \tau_j^N$
with the exterior field $\overline \mfF(., X_j^N)$ acting on individual $j$.

For any $x\in \bX$  and $t\ge 0$, we define
\begin{equation}
	\overline \mfL^{N, 1}(t, x)
	= \int_{\bX} 	\Big[\bar \omega^N(x, x') 
	- 	\bar \omega(x, x') \Big]
	 \cdot \bE[\bar\lambda(x', t-\wtd \tau_{x'})]
 \bar\mu^{S, N}_0(\Rd x')\;,
	\label{eq_def_cLiNt}
\end{equation}
where we define the random time $\wtd \tau_{x'}$ whatever $x'\in \bX$
as follows in term of some Poisson random measure $Q$ on $\RR_+^2$ 
with intensity $\Rd s \Rd u$:
\begin{equation}\label{eq_def_td_tauX}
\wtd \tau_{x'} 
= \inf\Big\{t\ge 0;\, \int_0^t \int_0^\infty 
\idc{u \le \overline \mfF(s, x')} Q(\Rd s,\Rd u) \ge 1\Big\}\,.
\end{equation}
Thus, $\overline \mfL^{N, 1}$ concerns the approximation of $\bar \omega^N$
by the kernel $\bar \omega$. 
\begin{rem}
The time $\wtd \tau_{x'}$ will only be considered through expectations taken at fixed $x'$ value,
so that we are not concerned about letting the random measure $Q$ depend on $x'$.
\end{rem}
The approximation 
of the initial condition $\bar\mu^{S, N}_0$
	by $\bar\mu^S_0$
is treated separately with the next term, defined also for any $x\in \bX$  and $t\ge 0$, 
\begin{equation}
		\overline \mfE^{N, 1}(t, x)
= \int_{\bX}	\bar \omega(x, x') \cdot \bE[\bar\lambda(x', t-\wtd \tau_{x'})]\;
			\Big[\bar \mu^{S, N}_0 - \bar\mu^S_0\Big](\Rd x').
	\label{eq_def_cEiNt}
\end{equation}
We will see in Remark~\ref{rem_cEiNt} that
$\overline \mfE^N(t, X^N_i)$
can also be related to the approximation 
of the empirical measure process $(\wtd \mu^{S, N}_s)$
by the limiting  $(\bar\mu^S_s)$.

We then define 
\begin{multline}
	\overline \mfV_i^{N, 0}(t)
	=  \frac{1}{N}\sum_{j\in \cI^N(0)}  
	\Big[N \omega^N(i, j)\lambda^N_j(A_j^N(0) + t) 
	- \bar\omega^N(X_i^N, X_j^N)
	\frac{\bar\lambda(X_j^N, A_j^N(0) + t)}{F^c_{X_j^N}(A_j^N(0))}\Big]\,,
	\label{eq_def_cViN0t}
\end{multline}
in a similar way as $\overline \mfV_i^{N,1}(t)$, now for the initially infected individuals.
We recall from Assumption~\ref{hyp-Blambda} that 
	for any $j\in \cI^N(0)$
	the value of the conditional expectation
	$\bar\lambda(X_j^N, A_j^N(0) + t)$
	is amplified
by the denominator $F^c_{X_j^N}(A_j^N(0))$
to account for the bias in $\lambda^N_j(A_j^N(0) + t)$
 due to the fact 
that individual $j$ has not recovered by time $0$.
The next terms $\overline \mfL^{N, 0}$ and $\overline \mfE^{N, 0}$ are similarly the analogs 
of respectively $\overline \mfL^{N, 1}$ and $\overline \mfE^{N, 1}$, now  for the initially infected individuals.
For any $x\in \bX$ and $t\ge 0$,
\begin{equation}
	\overline \mfL^{N,0}(t, x)
	= \int_{\bX}\hcm{-0.1}\int_0^\infty 
	\Big[\bar \omega^N(x, x') 
- 	\bar \omega(x, x') ]	
	\cdot 
	\frac{\bar\lambda(x', a' + t)}{F^{c}_{x'}(a')}
\;	\bar\mu^{I, N}_0(\Rd x', \Rd a')\;,
	\label{eq_def_cLiN0t}
\end{equation}
and finally, 
\begin{equation}
	\overline \mfE^{N, 0}(t, x)
	= \int_{\bX}\hcm{-0.1}\int_0^\infty 
	\bar \omega(x, x') \cdot	 \frac{\bar\lambda(x', a' + t)}{F^{c}_{x'}(a')}
\; \Big[\bar\mu^{I, N}_0(\Rd x', \Rd a')-	\bar\mu^I_0(\Rd x', \Rd a') \Big] \,,
	\label{eq_def_cEiN0t}
\end{equation}
so that $\overline \mfE^{N, 0}(t, X^N_i)$ accounts for the approximation of
	the initial condition $\bar\mu^{I, N}_0$ by $\bar\mu^I_0$.

\begin{lem}
		\label{lem_dec_FN}
With the above definitions, for any $N\ge 1$, $i\in \cS^N(0)$, $t\ge 0$, we have 
\begin{align*}
		\overline{\mfF}_i^N(t) - \overline{\mfF}(t, X_i^N)
 &= 	\overline \mfA_i^N(t) 
 && \text{(discrepancies between the jumps)}
\\&+ \overline \mfV_i^{N, 0}(t) + \overline \mfV_i^{N, 1}(t)
&&\text{(Law of Large Number)}
\\&+  \overline \mfL^{N, 0}(t, X^N_i) + \overline \mfL^{N, 1}(t, X^N_i) 
& &\text{(discrepancies between the kernels)}
\\&+ \overline \mfE^{N, 0}(t, X^N_i)
+ \overline \mfE^{N, 1}(t, X^N_i)
&& \text{(discrepancies between the initial conditions)}.
\end{align*} 
\end{lem}

\begin{proof}
	From \eqref{eq_def_td_tauX} it follows
	that $\PR(\wtd \tau_{x'} >s)
	= \exp\big(\int_0^s \overline{\mfF}(r,x') \Rd r\big)$,
	hence the law of $\wtd \tau_{x'}$
	has the density $\overline{\mfF}(s,x')
\,	\exp[\int_0^s \overline{\mfF}(r,x') \Rd r]$.
	This fact combined with  \eqref{eq_def_bmuS2} 
leads to 
\begin{equation}\label{eq_id_ENi}
\int_0^t\hcm{-0.2} \int_{\bX} \bar \omega(x,x') \bar\lambda(x', t-s) \overline{\mfF}(s,x') \bar\mu^S_s(\Rd x')\Rd s 
= 	\int_\bX \bar \omega(x, x') \bE[\bar\lambda(x', t-\wtd \tau_{x'})] \bar \mu^S_0(\Rd x')\,.
\end{equation}
Plugging this identity in 	\eqref{eq_def_bar_mfFtx},
we deduce the decomposition
	$\overline{\mfF}(t, x) = \overline{\mfF}^0(t, x) + \overline{\mfF}^1(t, x)$,
where
\begin{align}
	&\overline{\mfF}^0(t, x)
	=\int_{\bX}\hcm{-0.1}\int_0^\infty   \bar \omega(x,x') 
\cdot	\frac{\bar\lambda(x', a'+t)}{F^c_{x'}(a')}   \; \bar\mu_0^I(\Rd x', \Rd a')\,, 
		\label{eq_prop_mfF0}
	\\&\overline{\mfF}^1(t, x)
	= \int_\bX \bar \omega(x, x') \cdot \bE[\bar\lambda(x', t-\wtd \tau_{x'})] 
\;	\bar \mu^S_0(\Rd x')\,.
	\label{eq_prop_mfF1}
\end{align}
We aim at a similar decomposition for $\overline{\mfF}_i^N(t)$.
Recall \eqref{eq_def_DN}
where $\sD^N(t)$ has been defined as the subset in $\cS^{N}_0$ 
of individuals that have been infected by the disease  by time $t$.
If $j\in \cI^N(t)\cap \cS^{N}_0$, 
then $j \in \sD^N(t)$ and
thus $A_j^N(t) = t-\tau_j^N$.
If, on the other hand, $j\in \cS^{N}_0\setminus \sD^N(t)$, then $A_j^N(t) = 0$,
and thus $\lambda^N_j(A_j^N(t)) =0$.

Recalling \eqref{eq_def_mfFiN2},
we thus get the decomposition 
$\overline{\mfF}_i^N(t) = \overline{\mfF}_i^{N, 0}(t) + \overline{\mfF}_i^{N, 1}(t) $,
where
\begin{align}
		&\overline{\mfF}_i^{N, 0}(t)
	=\sum_{j \in \mathcal{I}^N(0)}  \omega^N(i, j)\cdot\lambda^N_j(A_j^N(0) + t) 
		\label{eq_prop_mfFN0}
	\\&\overline{\mfF}_i^{N, 1}(t)
	= \sum_{j\in  \cS^N(0)} \omega^N(i, j)\cdot \lambda^N_j(t-\tau_j^N)\,.
		\label{eq_prop_mfFN1}
\end{align}

We first show that $\overline{\mfF}_i^{N, 0}(t) - \overline{\mfF}^0(t, X_i^N)
= \overline \mfV_i^{N, 0}(t) + \overline \mfL^{N, 0}(t, X^N_i) + \overline \mfE^{N, 0}(t, X^N_i)$,
and next that $\overline{\mfF}_i^{N, 1}(t) - \overline{\mfF}^1(t, X_i^N)
= \overline \mfA_i^N(t) + \overline \mfV_i^{N, 1}(t) + \overline \mfL^{N, 1}(t, X^N_i) + \overline \mfE^{N, 1}(t, X^N_i)$.

By combining \eqref{eq_def_cViN0t} with \eqref{eq_prop_mfFN0},
then with the definition of $\bar\mu^{I, N}_0$  in \eqref{eq_def_muINt1}, we have
\begin{equation*}
\begin{split}
	\overline{\mfF}_i^{N, 0}(t)
- \overline \mfV_i^{N, 0}(t)
&= \frac{1}{N}\sum_{j\in \cI^N(0)}  
\bar\omega^N(X_i^N, X_j^N)
\frac{\bar\lambda(X_j^N, A_j^N(0) + t)}{F^c_{X_j^N}(A_j^N(0))}
\\&
=\int_{\bX}\hcm{-0.1}\int_0^\infty   
\bar \omega^N(X_i^N, x') \cdot \frac{\bar\lambda(x', a' + t)}{F^{c}_{x'}(a')}
\;\bar\mu^{I, N}_0 (\Rd x', \Rd a')\,.
\end{split}
\end{equation*}
Plugging \eqref{eq_def_cLiN0t} into this expression,
then exploiting \eqref{eq_def_cEiN0t} and \eqref{eq_prop_mfF0}, we obtain
\begin{equation}
\begin{split}
	\overline{\mfF}_i^{N, 0}(t)
- \overline \mfV_i^{N, 0}(t)
- \overline \mfL^{N, 0}(t, X^N_i)
&= \int_{\bX}\hcm{-0.1}\int_0^\infty \bar \omega(X^N_i, x') \cdot\frac{\bar\lambda(x', a' + t)}{F^{c}_{x'}(a')} 
  \bar\mu^{I, N}_0(\Rd x', \Rd a')
\\&= \overline \mfE^{N, 0}(t, X^N_i)
+  \overline{\mfF}^0(t, X_i^N)\,,
\end{split}
\label{eq_dec_FN0}
\end{equation}
which concludes our first claim. For the second claim, there is a first additional step where $A_i^N(t)$ is related to $\wtd A_i^N(t) = t- \wtd \tau_i^N$, through $\overline \mfA_i^N(t)$, before we can exploit the same arguments. We first combine \eqref{eq_def_cAiNt} with \eqref{eq_prop_mfFN1}, and hence, 
\begin{equation*}
	\overline{\mfF}_i^{N, 1}(t)
- \overline \mfA_i^N(t)
=  \sum_{j\in \cS^N(0)} \omega^N(i, j) \lambda^N_j(t-\wtd \tau_j^N)\,.
\end{equation*}
Plugging \eqref{eq_def_cViNt} into this expression
and exploiting the definitions of $\bar \mu^{S, N}$ in \eqref{eq_def_muSNt1}
and  of $\wtd \tau_{x'}$ in \eqref{eq_def_td_tauX}, we deduce that 
\begin{equation}
	\begin{split}
		\overline{\mfF}_i^{N, 1}(t)
	- \overline \mfA_i^N(t)	- \overline \mfV_i^{N, 1}(t)
		&= \frac{1}{N}\sum_{j\in \cS^N(0)}   
	\bar\omega^N(X_i^N, X_j^N)\cdot 	\bE_0^N[\bar\lambda(X_j^N, t-\wtd \tau_j^N)]
	\\&= \int_\bX 
	\bar \omega^N(X_i^N, x') \bE[ \bar\lambda(x', t-\wtd \tau_{x'})]
	\bar \mu^{S, N}_0(\Rd x')\,.
	\end{split}
\label{eq_prop_int2}
\end{equation}
Next plugging \eqref{eq_def_cLiNt} into this expression,
then exploiting \eqref{eq_def_cEiNt} and \eqref{eq_prop_mfF1}, we get 
\begin{equation}
\begin{split}
\overline{\mfF}_i^{N, 1}(t)
- \overline \mfA_i^N(t)	- \overline \mfV_i^{N, 1}(t) - \overline \mfL^{N, 1}(t, X^N_i)
&= \int_\bX 
\bar \omega(X_i^N, x') \bE[ \bar\lambda(x', t-\wtd \tau_{x'})]
\bar \mu^{S, N}_0(\Rd x')
\\&= \overline \mfE^{N, 1}(t, X^N_i) + \overline{\mfF}^1(t, X_i^N).
\end{split}
\label{eq_dec_FN1}
\end{equation}
Since $\overline{\mfF}(t, x) = \overline{\mfF}^0(t, x) + \overline{\mfF}^1(t, x)$
and $\overline{\mfF}_i^N(t) = \overline{\mfF}_i^{N, 0}(t) + \overline{\mfF}_i^{N, 1}(t)$,
combining \eqref{eq_dec_FN0} and \eqref{eq_dec_FN1}
concludes the proof of Lemma~\ref{lem_dec_FN}.
\end{proof}

\begin{rem}\label{rem_cEiNt}
In the expression of $\overline \mfE^{N, 1}(t, x)$ in \eqref{eq_def_cEiNt},
we decided to relate as directly as possible 
to the difference between $\bar \mu^{S, N}_0$ and $\bar\mu^{S}_0$.
That being said, this term has the following alternative interpretation 
in terms of the difference between the processes $\wtd \mu^{S, N}_s$ and $\bar\mu^{S}_s$:
	\begin{equation}\label{eq_def_alt_EN}
	\overline \mfE^{N, 1}(t, x)
	=   \bE_0^N\Bigg[\int_0^t  
		\int_{\bX}		\bar \omega(x, x') \bar\lambda(x', t-s)
		\cdot  \overline \mfF(s, x')\Big[ \wtd \mu^{S, N}_s - \bar\mu^S_s\Big](\Rd x')
	\;	\Rd s\, \Bigg]\,. 
\end{equation}
\end{rem}

\begin{proof}[Proof of \eqref{eq_def_alt_EN}]
	Recall the identity \eqref{eq_id_ENi}. 
	Similarly, we express $\wtd \tau_j^N$ through the Poisson random measure $Q_j$:
	\begin{align}\label{eq_prop_int}
		&\frac{1}{N}\sum_{j\in \cS^N(0)} \bar \omega(x, X_j^N)
		\bE_0^N[\bar\lambda(X_j^N, t-\wtd \tau_j^N)]
		\\&= \bE_0^N\Bigg[\frac{1}{N}\sum_{j\in \cS^N(0)} 
		\int_0^t\int_0^\infty \bar \omega(x, X_j^N) \bar\lambda(X_j^N, t-s)
		\idc{\wtd D_j^N(s^-)= 0} \idc{u\le \overline \mfF(s^-, X_j^N)} Q_j(\Rd s, \Rd u)\Bigg].
		\notag
	\end{align}
	In the expression in the second line, 
	we can replace $Q_j$ by its intensity
	since the integrant is predictable with respect to its filtration $(\wtd \cF^N_t)$,
	then express the sum over $j\in \cS^N(0)$ in terms of the 
	empirical measure $\wtd \mu^{S, N}_s$, recall \eqref{eq_def_tmuS},
	which leads to 
	\begin{equation}\label{eq_id_wmusSN}
		\bE_0^N\Big[\int_0^t
		\int_\bX 
		\bar	\omega(x, x') \bar\lambda(x', t-s)
		\overline \mfF(s, x') \wtd \mu^{S, N}_s(\Rd x') \Rd s\Big].
	\end{equation}
	Recalling \eqref{eq_prop_int2} in addition to \eqref{eq_id_ENi}, 
	\eqref{eq_prop_int} and \eqref{eq_id_wmusSN},
	we deduce identity \eqref{eq_def_alt_EN}.
\end{proof}

In the following, we treat separately the various terms distinguished in Lemma \ref{lem_dec_FN},
first $\overline \mfA_i^N$, see Lemma~\ref{lem_AN_bound},
second $\overline \mfV^{N, 1}$ and $\overline \mfV^{N, 0}$, see Lemma~\ref{lem_VN_bound},
third  $\overline \mfL^{N, 1}$ and $\overline \mfL^{N, 0}$, see Lemma~\ref{lem_LN_bound},
and finally  $\overline \mfE^{N, 1}$ and $\overline \mfE^{N, 0}$, see Lemma~\ref{lem_EN_bound}.
We start with the processes $(\overline \mfA_i^N)$
defined in~\eqref{eq_def_cAiNt}.
\begin{lem}
\label{lem_AN_bound}
Under Assumptions \ref{hyp-lambda}, \ref{hyp-initial}, \ref{hyp-w} and \ref{hyp-Var}, 
there exist a constant $C>0$
such that 
\begin{equation*}
\bE_0^N\Bigg[ \frac1N	\sum_{i\in \cS^N(0)}	\Big|\overline \mfA_i^N(t)\Big|\Bigg]
\le C\cdot \mfD(t) 
+ \overline \mfV^N\,
\end{equation*}
holds  a.s. for any $t\ge 0$,
where $\mfD(t) $ is defined in \eqref{eq_def_mfD}, and  $\overline\mfV^N$ is given by
\begin{equation} \label{eq_def_mfVN}
	\overline\mfV^N := \lambda^*\cdot \sqrt{\Upsilon^N
		+ \omega^*\cdot \bar \gamma^N}
	\,.
\end{equation}

\end{lem}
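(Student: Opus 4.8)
The plan is to trace the discrepancy term $\overline\mfA_i^N(t)$ back to a pointwise comparison of infectivities and then to a single conditional variance that is controllable thanks to the independence of the edge weights. \emph{First,} I would establish the pointwise bound, valid for every $j\in\cS^N(0)$ and every $t\ge 0$,
\[
\big|\lambda^N_j(t-\tau_j^N) - \lambda^N_j(t-\wtd\tau_j^N)\big| \;\le\; \lambda^*\cdot\sup_{r\in[0,t]}\big|D_j^N(r) - \wtd D_j^N(r)\big|.
\]
Indeed, by Assumption~\ref{hyp-lambda} the left-hand side never exceeds $\lambda^*$, and it vanishes unless $\tau_j^N\ne\wtd\tau_j^N$ with $\min(\tau_j^N,\wtd\tau_j^N)\le t$ (if both times exceed $t$ both infectivities are $0$, and if the two times coincide the two values are equal); in the remaining case $D_j^N$ and $\wtd D_j^N$ already disagree at some $r\le t$, so the right-hand side equals $1$. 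Setting $b_j:=\sup_{r\in[0,t]}|D_j^N(r)-\wtd D_j^N(r)|\in[0,1]$ and $W_j:=\tfrac1N\sum_{i\in\cS^N(0)}\omega^N(i,j)$, this bound plugged into \eqref{eq_def_cAiNt} and summed gives
\[
\frac1N\sum_{i\in\cS^N(0)}\big|\overline\mfA_i^N(t)\big| \;\le\; \lambda^*\sum_{j\in\cS^N(0)} b_j\,W_j .
\]

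\emph{Second,} I would split $W_j=\bE_0^N[W_j]+(W_j-\bE_0^N[W_j])$. Since $N\,\bE_0^N[\omega^N(i,j)]=\bar\omega^N(X_i^N,X_j^N)\le\omega^*$ by Lemma~\ref{lem_om_st}, we have $\bE_0^N[W_j]\le\omega^*/N$; and, $\bE_0^N[W_j]$ being $\cF_0^N$-measurable, the mean contribution to $\lambda^*\,\bE_0^N[\sum_j b_jW_j]$ equals $\lambda^*\sum_{j\in\cS^N(0)}\bE_0^N[W_j]\,\bE_0^N[b_j]\le\lambda^*\omega^*\,\tfrac1N\sum_{j\in\cS^N(0)}\bE_0^N[b_j]=\lambda^*\omega^*\,\overline\mfD^N(t)$, recalling \eqref{eq_def_mfD}. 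This is the asserted term $C\,\overline\mfD^N(t)$ with $C=\lambda^*\omega^*$. For the centered contribution I would use only $0\le b_j\le1$, so that it is at most $\lambda^*\sum_{j\in\cS^N(0)}\bE_0^N[\,|W_j-\bE_0^N[W_j]|\,]$, and then Jensen's inequality yields the bound $\lambda^*\sum_{j\in\cS^N(0)}\sqrt{\Var_0^N(W_j)}$.

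\emph{Third,} it remains to bound $\sum_{j\in\cS^N(0)}\sqrt{\Var_0^N(W_j)}$. The weights $\omega^N(i,j)$, $i\in\cS^N(0)$, lie on pairwise distinct undirected edges, hence are mutually independent conditionally on $\cF_0^N$, so $\Var_0^N(W_j)=\tfrac1{N^2}\sum_{i\in\cS^N(0)}\Var_0^N(\omega^N(i,j))$. Discarding the negligible $\bE_0^N[\omega^N(i,j)]^2=O(N^{-2})$ and inserting \eqref{eq_prop_ups} together with $\kappa^N\gamma^N=\bar\omega^N/N\le\omega^*/N$, one gets
\[
\Var_0^N\!\big(\omega^N(i,j)\big)\;\le\;\kappa^N(X_i^N,X_j^N)\,\bE_0^N\!\big[\upsilon^N(i,j)\mid(i,j)\in\cE^N\big]+\tfrac{\omega^*}{N}\,\gamma^N(X_i^N,X_j^N).
\]
Summing over $i\le N$ and over $j\in\cS^N(0)$ (adding non-negative terms), bounding $|\cS^N(0)|\le N$ and applying Cauchy--Schwarz in the form $\sum_j\sqrt{a_j}\le\sqrt{N}\,(\sum_j a_j)^{1/2}$, one arrives, by the very definitions of $\Upsilon^N$ and $\bar\gamma^N$ in \eqref{eq_def_bGammaN}, at $\sum_{j\in\cS^N(0)}\sqrt{\Var_0^N(W_j)}\le(\Upsilon^N+\omega^*\bar\gamma^N)^{1/2}$. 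Multiplying by $\lambda^*$ reproduces $\overline\mfV^N$ of \eqref{eq_def_mfVN}, which, combined with the second step, completes the proof.

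I expect the main obstacle to be that $b_j$ and $W_j$ are genuinely correlated — $b_j$ depends on the whole infection dynamics and hence on a large collection of edge weights — so they cannot be decoupled directly; the device that makes the argument go through is to keep only the crude bound $b_j\le1$ inside the fluctuation term, leaving $\Var_0^N(W_j)$ as the sole quantity to control, which does factorize over edges. The other point needing a little care is the short case analysis behind the pointwise inequality of the first step.
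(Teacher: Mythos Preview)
Your proof is correct and follows essentially the same approach as the paper's: the pointwise bound in Step~1, the split of the edge-weight sum into its $\cF_0^N$-conditional mean (giving the $C\,\overline\mfD^N(t)$ term with $C=\lambda^*\omega^*$) and its centered fluctuation, the crude use of $b_j\le 1$ on the fluctuation, and the Cauchy--Schwarz/variance computation over independent edge weights leading to $\overline\mfV^N$ all match the paper's argument line by line, up to a harmless difference in normalisation (you define $W_j$ with the factor $1/N$ already inside, whereas the paper works with $\wtd S_j^N=\sum_i\omega^N(i,j)-\bE_0^N[\sum_i\omega^N(i,j)]$).
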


We recall the defining property of $\omega^*$ given in Assumption \ref{hyp_ws}. 

\begin{proof}
Recalling \eqref{eq_def_cAiNt}
and exploiting  the upper-bound $\lambda^* $
of the functions $(\lambda_j^N)$, we get 
\begin{equation}\label{eq_prop_DiN}
	\sum_{i\in \cS^N(0)}	\Big|\overline \mfA_i^N(t)\Big|
	\le  \lambda^* 
	\cdot  \sum_{j\in \cS^N(0)} \sup_{r\le t}|D^N_j(r) - \wtd D^N_j(r)|
	\cdot \sum_{i\in \cS^N(0)} \omega^N(i, j).
\end{equation}
We observe for any $j\in \cS^N(0)$, the following identity 
by virtue of \eqref{eq_prop_omN}:
\begin{equation*}
	\bE_0^N\Big[ \sum_{i\in \cS^N(0)} \omega^N(i, j)\Big]
	= \int_{\bX} \bar\omega^N(x, X_j^N) \bar\mu_0^{S, N}(\Rd x).
\end{equation*}
Since $\bar \omega^N$ is upper-bounded by $\omega^*$
and $\bar\mu_0^{S, N}(\bX)\le 1$, we obtain the upper bound
\begin{equation}\label{eq_prop_mfA}
\frac1N	\sum_{i\in \cS^N(0)} \Big|\overline \mfA_i^N(t)\Big|
	\le \frac{\lambda^*}{N} \sum_{j\in \cS^N(0)} \sup_{r\le t}|D^N_j(r) - \wtd D^N_j(r)|
	\cdot(\wtd S_j^N + \omega^*)\,,
\end{equation}
where 
\begin{equation}\label{eq_def_wSjN}
	\wtd S_j^N  := \sum_{i\in \cS^N(0)} \omega^N(i, j) 
	- \bE_0^N\Big[ \sum_{i\in \cS^N(0)} \omega^N(i, j)\Big]\,.
\end{equation}
Since $\sup_{r\le t}|D^N_j(r) - \wtd D^N_j(r)|$ is upper-bounded by 1
for any $j\in  \cS^N(0)$,
\eqref{eq_prop_mfA} entails 
\begin{equation*}
	\bE_0^N\Bigg[ \frac1N	\sum_{i\in \cS^N(0)}	\Big|\overline \mfA_i^N(t)\Big|\Bigg]
	\le C\cdot \mfD(t) 
	+ \overline \mfU^N\,,
\end{equation*}
where $C = \lambda^*\cdot \omega^*$ and 
\begin{equation} \label{eq_def_mfUN}
	\overline\mfU^N := \frac{\lambda^*}{N} \bE_0^N\Bigg[\sum_{j\in \cS^N(0)} \big|\wtd S_j^N\big|\Bigg]\,.
\end{equation}
So Lemma~\ref{lem_AN_bound} will be concluded by showing that $\overline\mfU^N\le 	\overline\mfV^N$.
Thanks to the Cauchy-Schwartz inequality
with respect to the product measure,
with the fact that the cardinality of
$\cS^N(0)$ is less than $N$: 
\begin{equation}\label{eq_prop_CS_wSjN}
\begin{split}
	\bE_0^N\Bigg[ \frac1{N} \sum_{j\in \cS^N(0)} \Big\vert\wtd S_j^N \Big\vert\Bigg]\,
\le \sqrt{ \frac1{N} \sum_{j\in \cS^N(0)} \bE_0^N\Bigg[ \Big(\wtd S_j^N \Big)^2\Bigg]}\,.
\end{split}
\end{equation}
For any $j\in \cS^N(0)$ and conditionally on $\cF^N_0$,
$\wtd S_j^N$ is the sum of independent centered variables,
which leads to the following identity:
\begin{equation*}
\bE_0^N\Bigg[ \Big(\wtd S_j^N \Big)^2\Bigg]
	= \sum_{i\in \cS^N(0)} 
\Var_0^N\Big[\omega^N(i, j)\Big]\,.
\end{equation*}
We recall that $\Var_0^N(Z) = \bE_0^N(Z^2) - \bE_0^N(Z)^2$ by definition 
	of this variance conditional on $\cF_0^N$ for any random variable $Z$. 
As we will need later the conditional second moment of $\omega^N(i, j)$
instead of its variance, 
we rather consider it as the upper-bound, 
then exploit \eqref{eq_prop_ups} and \eqref{eq_def_kN}
to deduce the following inequality:
\begin{multline}
\bE_0^N\Bigg[ \Big(\wtd S_j^N \Big)^2\Bigg]
\le 	\sum_{i\in \cS^N(0)} \bE_0^N\Big[\omega^N(i, j)^2\Big]
\\
\quad= \sum_{i\in \cS^N(0)} \bE_0^N\big[\upsilon^N(i, j); (i, j)\in \cE^N\big] 
 + \sum_{i\in \cS^N(0)} \kappa^N(X_i^N, X_j^N) \cdot \gamma^N(X_i^N, X_j^N)^2\,.
\end{multline}
Remark that from \eqref{eq_prop_omN},
for any $i, j\in \cS^N(0)$,
$\kappa^N(X_i^N, X_j^N)\cdot \gamma^N(X_i^N, X_j^N)
=  \bar\omega^N(X_i^N, X_j^N)/N\le \omega^*/N$. 
Recalling the definitions given in Assumption \ref{hyp-Var},
we thus deduce 
\begin{equation}\label{eq_prop_UAn}
\frac1{N} \sum_{j\in \cS^N(0)} \bE_0^N\Bigg[ \Big(\wtd S_j^N \Big)^2\Bigg]
\le \Upsilon^N
+ \omega^*\cdot \bar \gamma^N\,.
\end{equation}
Recalling \eqref{eq_def_mfUN} and \eqref{eq_prop_CS_wSjN},
this entails $\overline\mfU^N\le 	\overline\mfV^N$ a.s.
and concludes the proof of Lemma~\ref{lem_AN_bound}.
\end{proof}

For the upper-bound of $\overline \mfV_i^{N, 1}$ and $\overline \mfV_i^{N, 0}$
as defined in respectively \eqref{eq_def_cViNt} and \eqref{eq_def_cViN0t},
the proof of the next lemma follows similar principles as in the previous one.

\begin{lem}
	\label{lem_VN_bound}
The following upper-bound holds for any $t>0$
with the sequence 	$(\bar  \mfV^N)_{N\ge 1}$ defined in \eqref{eq_def_mfVN}:
	\begin{equation*}
\bE_0^N\Big[ \frac1N\sum_{i\in \cS^N(0)}	\big|\overline \mfV_i^{N, 1}(t)\big|  \Big]
\vee \bE_0^N\Big[ \frac1N\sum_{i\in \cS^N(0)}	\big|\overline \mfV_i^{N, 0}(t)\big| \Big]
\le \overline\mfV^N\,.
	\end{equation*}
\end{lem}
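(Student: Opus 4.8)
The plan is to adapt the variance-based argument of Lemma~\ref{lem_AN_bound}, since $\overline\mfV_i^{N,1}$ and $\overline\mfV_i^{N,0}$ are, by construction, sums over $j$ of centered random variables once we condition on the right $\sigma$-field. First I would fix $i\in\cS^N(0)$ and observe that, conditionally on $\cF_0^N$ \emph{and} on the infection times $(\wtd\tau_j^N)_{j\in\cS^N(0)}$ (equivalently, on the Poisson measures $(Q_j)$ driving $\wtd D^N_j$), the summands in \eqref{eq_def_cViNt},
\[
Z_{ij}^N := N\,\omega^N(i,j)\cdot\lambda^N_j(t-\wtd\tau_j^N) - \bar\omega^N(X_i^N,X_j^N)\cdot\bE_0^N[\bar\lambda(X_j^N,t-\wtd\tau_j^N)],
\]
are \emph{not} centered in general — but after taking the further conditional expectation we use that $\bE_0^N[N\omega^N(i,j)]=\bar\omega^N(X_i^N,X_j^N)$ from \eqref{eq_prop_omN} and that $\omega^N(i,j)$ is independent of $\lambda^N_j$ (hence of $\wtd\tau_j^N$ and of $Y_j^N$) conditionally on $\cF_0^N$ and $\cE^N$. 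The clean way is therefore to bound $\bE_0^N[|\overline\mfV_i^{N,1}(t)|]$ by $\sqrt{\Var_0^N(\overline\mfV_i^{N,1}(t))}$ after first \emph{subtracting and adding} the partially-averaged quantity $\frac1N\sum_j\bar\omega^N(X_i^N,X_j^N)\lambda^N_j(t-\wtd\tau_j^N)$; the outer piece vanishes in $\cF_0^N$-conditional expectation and the inner piece is exactly of the same form already controlled. Concretely, conditionally on $\cF_0^N$ the variables indexed by distinct $j$ are independent, so the conditional variance of $\overline\mfV_i^{N,1}(t)$ is $\frac1{N^2}\sum_{j\in\cS^N(0)}\Var_0^N(Z_{ij}^N)$.

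Next I would bound $\Var_0^N(Z_{ij}^N)$ by the conditional second moment $\bE_0^N[(N\omega^N(i,j)\lambda_j^N(t-\wtd\tau_j^N))^2]$, use $\lambda_j^N\le\lambda^*$ from Assumption~\ref{hyp-lambda}, and then invoke \eqref{eq_prop_ups} together with $\kappa^N\gamma^N=\bar\omega^N/N\le\omega^*/N$ exactly as in the proof of Lemma~\ref{lem_AN_bound}: this gives
\[
\Var_0^N(Z_{ij}^N)\le (\lambda^*)^2\Big(N^2\,\kappa^N(X_i^N,X_j^N)\,\bE_0^N[\upsilon^N(i,j)\mid(i,j)\in\cE^N] + N^2\,\kappa^N(X_i^N,X_j^N)\gamma^N(X_i^N,X_j^N)^2\Big).
\]
Summing over $j$, dividing by $N^2$, and then averaging over $i\in\cS^N(0)$ and dividing by $N$, the first term produces $\frac1N\sum_{i,j}\kappa^N\bE_0^N[\upsilon^N\mid\cdot]=\Upsilon^N$ and the second produces at most $\omega^*\cdot\frac1{N^2}\sum_{i,j}\gamma^N(X_i^N,X_j^N)=\omega^*\bar\gamma^N$, by Assumption~\ref{hyp-Var}. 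Applying Cauchy--Schwarz twice as in \eqref{eq_prop_CS_wSjN} (once to pass from $\bE_0^N|\cdot|$ to $\sqrt{\bE_0^N[\cdot^2]}$ inside the sum over $i$, once to pull the sum inside the square root) yields
\[
\bE_0^N\Big[\tfrac1N\sum_{i\in\cS^N(0)}|\overline\mfV_i^{N,1}(t)|\Big]\le \lambda^*\sqrt{\Upsilon^N+\omega^*\bar\gamma^N}=\overline\mfV^N.
\]

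For $\overline\mfV_i^{N,0}$ the argument is identical, now summing over $j\in\cI^N(0)$: here there is no randomness in the infection age (it is the deterministic $A_j^N(0)+t$), the centering identity is the second part of Assumption~\ref{hyp-Blambda} combined with \eqref{eq_prop_omN}, and the relevant uniform bound on the summand is $\lambda^N_j(A_j^N(0)+t)\le\lambda^*$; the same second-moment estimate \eqref{eq_prop_ups} applies since $\omega^N(i,j)$ is again independent of $\lambda_j^N$ conditionally on $\cF_0^N,\cE^N$. One only needs the trivial observation that $|\cI^N(0)|\le N$ so that the average $\frac1N\sum_{j\in\cI^N(0)}$ is dominated by the full average $\frac1N\sum_{j\le N}$ appearing in $\Upsilon^N$ and $\bar\gamma^N$. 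Taking the maximum of the two bounds finishes the proof. The main obstacle, and the only genuinely delicate point, is the careful bookkeeping of which $\sigma$-field the various independence and centering properties hold with respect to — in particular ensuring that $\omega^N(i,j)$ remains independent of $\lambda_j^N$ after conditioning on $\cE^N$, so that the cross term in the second moment factorizes and \eqref{eq_prop_ups} can legitimately be used; everything else is a direct transcription of the computation already carried out for $\overline\mfA_i^N$.
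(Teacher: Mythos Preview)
Your proposal is correct and follows essentially the same route as the paper: apply Cauchy--Schwarz twice as in \eqref{eq_prop_CS_wSjN}, observe that $\overline\mfV_i^{N,1}(t)$ is a sum over $j$ of $\cF_0^N$-conditionally centered and mutually independent (the paper says ``orthogonal in $L^2(\Omega,\PR_0^N)$'') terms, bound each individual variance by $(\lambda^*)^2\,\bE_0^N[\omega^N(i,j)^2]$ using the conditional independence of $\omega^N(i,j)$ and $\lambda_j^N(t-\wtd\tau_j^N)$, and conclude via \eqref{eq_prop_UAn}; the treatment of $\overline\mfV_i^{N,0}$ is identical. Your opening detour---conditioning additionally on $(\wtd\tau_j^N)$ and proposing to subtract and add a partially averaged quantity---is unnecessary: $\overline\mfV_i^{N,1}(t)$ is already centered under $\PR_0^N$ directly (as you implicitly acknowledge a few lines later), so no intermediate splitting is required and the paper simply omits it.
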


\begin{proof}
	We treat this component in the same way as $\mfU^N$, 
starting with  the Cauchy-Schwartz inequality:
\begin{equation*}
	\begin{split}
		\bE_0^N\Big[ \frac1N\sum_{i\in \cS^N(0)}	\big|\overline \mfV_i^{N, 1}(t)\big| \Big]
\le \sqrt{\frac1N \sum_{i\in \cS^N(0)} \bE_0^N\Big[\big(\overline \mfV_i^{N, 1}(t)\big)^2  \Big]},
	\end{split}
\label{eq_CS_VN}
\end{equation*}
Recalling \eqref{eq_def_cViNt},
we note that the random variables $\overline \mfV_i^{N, 1}(t)$ are centered
conditionally on $\cF^N_0$, 
so that the term under the square root is actually a variance.
$\overline \mfV_i^{N, 1}(t)$
is a sum of r.v.'s which are orthogonal in $L^2(\Omega, \PR_0^N)$.
Thus, for any $i\in \cS^N(0)$,
\begin{equation*}
	\bE_0^N\Big[\big(\overline \mfV_i^{N, 1}(t)\big)^2 \Big]
	= \sum_{j\in \cS^N(0)}   
	\Var_0^N\big[\omega^N(i, j)\cdot\lambda^N_j(t-\wtd \tau_j^N) \big].
\end{equation*}
For any $i, j \in \cS^N(0)$, 
since $\omega^N(i, j)$ and $\lambda^N_j(t-\wtd \tau_j^N)$ are independent conditionally on $\cF^N_0$
and since the functions $(\lambda_j^N)$ are uniformly upper-bounded by $\lambda^* $:
\begin{equation}\label{eq_prop_omLam}
	\Var_0^N\big[\omega^N(i, j)\cdot\lambda^N_j(t-\wtd \tau_j^N)\big]
	\le (\lambda^* )^2\cdot \bE_0^N\big[\omega^N(i, j)^2 \big].
\end{equation}
The argument for the following inequality
is then the same as for \eqref{eq_prop_UAn}:
\begin{equation}
\frac1N \sum_{i\in \cS^N(0)} \bE_0^N\Big[\big(\overline \mfV_i^{N, 1}(t)\big)^2 \Big]
\le (\lambda^*)^2\cdot \Big[\Upsilon^N
+ \omega^*\cdot \bar \gamma^N\Big]\,.
\end{equation}
Concerning the sequence $(\overline \mfV_i^{N, 0}(t))$, 
we first recall the following identity
for any $j\in \cI^N(0)$ as part of Assumption \ref{hyp-Blambda}:
\begin{equation}
\bE_0^N\Big[\lambda_j^N(A_j^N(0) + t)\Big]
= \frac{\bar\lambda(X_j^N, A_j^N(0) + t)}{F^c_{X_j^N}(A_j^N(0))}.
\end{equation}
Since $\omega^N(i, j)$ and $\lambda_j^N(A_j^N(0) + t)$ are independent conditionally 
on $\cF^N_0$, we deduce that $\overline \mfV_i^{N, 0}(t)$ is also conditionally centered,
whatever $i\in \cS^N(0)$ and $t\ge 0$.
We can then exploit the same argument for $\overline \mfV_i^{N, 0}(t)$ as for $\overline \mfV^{N, 1}_i(t)$,
thanks to the Cauchy-Schwartz inequality and replacing \eqref{eq_prop_omLam} by
\begin{equation*}
	\Var_0^N\big[\omega^N(i, j)\cdot\lambda^N_j(A_j^N(0) + t) \big]
	\le (\lambda^* )^2\cdot \bE_0^N\big[\omega^N(i, j)^2 \big].
\end{equation*}
Lemma \ref{lem_VN_bound} is therefore concluded with 
$\overline \mfV^{N}$ in \eqref{eq_def_mfVN}.
\end{proof}

$\overline \mfL^{N, 1}$ and $\overline \mfL^{N, 0}$
as defined respectively in \eqref{eq_def_cLiNt} and \eqref{eq_def_cLiN0t} are quite directly 
upper-bounded.
\begin{lem}
	\label{lem_LN_bound}
By virtue of Assumptions~\ref{hyp-eta}, \ref{hyp-Blambda} and  \ref{hyp-initial}:
\begin{equation*}
\Big|\overline \mfL^{N, 1}(t, x)\Big|+ \Big|\overline \mfL^{N, 0}(t, x)\Big|
\le \lambda^*\cdot \Ninf{\bar \omega^N	- \bar \omega}\,
\end{equation*}
holds for any $t>0$ and $x\in \bX$.
\end{lem}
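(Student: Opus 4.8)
The plan is to estimate $|\overline\mfL^{N,1}(t,x)|$ and $|\overline\mfL^{N,0}(t,x)|$ separately by pulling the uniform distance $\Ninf{\bar\omega^N-\bar\omega}$ out of the defining integrals \eqref{eq_def_cLiNt} and \eqref{eq_def_cLiN0t}, bounding the remaining nonnegative kernels by $\lambda^*$, and then observing that the two leftover total masses add up to at most one.

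For $\overline\mfL^{N,1}$, I would first record that $\bar\lambda\le\lambda^*$ everywhere: this follows from Assumptions~\ref{hyp-lambda} and \ref{hyp-Blambda}, since $\bar\lambda(X_i^N,a)=\bE_0^N[\lambda_i^N(a)]\le\lambda^*$ at every node type. With the convention that $\bar\lambda$ vanishes on negative arguments (equivalently, restricting the expectation to the event $\{\wtd \tau_{x'}\le t\}$), one gets $0\le\bE[\bar\lambda(x',t-\wtd \tau_{x'})]\le\lambda^*$ for every $x'$, whence, factoring $\Ninf{\bar\omega^N-\bar\omega}$ out of \eqref{eq_def_cLiNt}, $|\overline\mfL^{N,1}(t,x)|\le\lambda^*\,\Ninf{\bar\omega^N-\bar\omega}\,\bar\mu^{S,N}_0(\bX)$. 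For $\overline\mfL^{N,0}$, the only subtlety is that the kernel $\bar\lambda(x',a'+t)/F^c_{x'}(a')$ is \emph{not} the hazard-type quantity $\chi$ bounded in Assumption~\ref{hyp-Blambda}; instead, on the support of $\bar\mu^{I,N}_0$ — that is, at $(x',a')=(X_j^N,A_j^N(0))$ for $j\in\cI^N(0)$ — the second identity of Assumption~\ref{hyp-Blambda} identifies it with $\bE_0^N[\lambda_j^{N,0}(t)]$, and $\lambda_j^{N,0}(t)=\lambda_j^N(t+A_j^N(0))\le\lambda^*$ by Assumption~\ref{hyp-lambda}. Hence this kernel is $\le\lambda^*$ on that support, and the same factoring gives $|\overline\mfL^{N,0}(t,x)|\le\lambda^*\,\Ninf{\bar\omega^N-\bar\omega}\,\langle\bar\mu^{I,N}_0,{\bf 1}\rangle$.

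To conclude I would add the two estimates and use \eqref{eq_def_muXN}: since $\cS^N(0)$ and $\cI^N(0)$ are disjoint subsets of $\II{1,N}$, one has $\bar\mu^{S,N}_0(\bX)+\langle\bar\mu^{I,N}_0,{\bf 1}\rangle\le\bar\mu^N_X(\bX)=1$, which is exactly the asserted inequality. There is no genuine obstacle here — the text rightly calls this "quite direct" — the only points to get right are the extension convention for $\bar\lambda$ at negative arguments in the first term, the \emph{indirect} control of the kernel in the second term through Assumption~\ref{hyp-Blambda} rather than through $\chi$, and the fact that one must not bound each summand by the full right-hand side but rather share the unit total-mass budget between the susceptible and infected initial measures.
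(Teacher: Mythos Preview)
Your proof is correct and follows essentially the same structure as the paper's: factor out $\Ninf{\bar\omega^N-\bar\omega}$, bound the remaining nonnegative kernels by $\lambda^*$, and use that the masses of $\bar\mu^{S,N}_0$ and $\bar\mu^{I,N}_0$ sum to at most one.

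There is one small difference in how the kernel $\bar\lambda(x',a'+t)/F^c_{x'}(a')$ in $\overline\mfL^{N,0}$ is controlled. You remark that this is not the quantity $\chi$ and therefore argue via the second identity of Assumption~\ref{hyp-Blambda} on the atomic support of $\bar\mu^{I,N}_0$, identifying the kernel with $\bE_0^N[\lambda_j^{N,0}(t)]\le\lambda^*$. The paper instead observes that, since $F^c_{x'}$ is non-increasing (Assumption~\ref{hyp-eta}),
\[
\frac{\bar\lambda(x',a'+t)}{F^c_{x'}(a')}\;\le\;\frac{\bar\lambda(x',a'+t)}{F^c_{x'}(a'+t)}\;=\;\chi(x',a'+t)\;\le\;\lambda^*,
\]
so the $\chi$-bound from Assumption~\ref{hyp-Blambda} does apply after a shift in the age variable. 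Both routes are valid; the paper's is a pointwise estimate independent of the support structure, while yours avoids invoking the monotonicity of $F^c$.
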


\begin{proof}
We first upper-bound $|\bar \omega^N(x, x') - \bar \omega(x, x')|$
by $\Ninf{\bar \omega^N 	- \bar \omega}$.
By virtue of Assumption~\ref{hyp-Blambda},
we exploit $\lambda^*$
as the upper-bound 
for any $x'\in \bX$ and any $a', t\in \RR_+$
of $\bE[\bar\lambda(x', t-\wtd \tau_{x'})]$
in \eqref{eq_def_cLiNt}
and of $\bar\lambda(x', a'+t)/F^{c}_{x'}(a'+t)$.
Since $F_{x'}^c$ is non-increasing 
for any such $x'$ by virtue of Assumption~\ref{hyp-eta},
the latter upper-bound entails the one 
of $\bar\lambda(x', a'+t)/F^{c}_{x'}(a')$
in \eqref{eq_def_cLiN0t}.
Finally, by virtue of the considered scaling of $\bar\mu^{S, N}(\Rd x')$
and of $\bar\mu^{I, N}(\Rd x', \Rd a')$,
recall \eqref{eq_def_muSNt}-\eqref{eq_def_muINt},
their added masses is upper-bounded by 1,
which concludes the proof of Lemma~\ref{lem_LN_bound}.
\end{proof}

In order to finally deal with the upper-bound 
of  both $\overline \mfE^{N, 1}$ and $\overline \mfE^{N, 0}$,
defined respectively in \eqref{eq_def_cEiNt} and \eqref{eq_def_cEiN0t},
we exploit the following proposition which will be proved in Appendix~\ref{sec:appendix}.
\begin{prop}
\label{prop_cvg_mu0}
Let 
$\cY$ be a Polish space.
Let $\mu \in \cM(\bX)$, $\nu\in \cM(\cY)$
  and  $k:\bX\times \cY \mapsto \RR$ 
  be a bounded measurable function
  that is continuous $\mu \otimes \nu$ almost everywhere.
Let in addition
$(\mu^N)_{N\ge 1}$ and $(\nu^N)_{N\ge 1}$
be two sequences of possibly random  measures in  $\cM(\bX)$ and $\cM(\cY)$, respectively, 
that converge in probability respectively to $\mu$ and $\nu$, for the topology of weak convergence.
Then, the following quantity converges to zero in probability as $N$ tends to infinity:
\begin{equation*}
	\int_\bX \Big|\int_\cY k(x, y) [\nu^N - \nu](dy)\Big| \mu^N(\Rd x).
\end{equation*}
\end{prop}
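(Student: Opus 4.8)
The plan is to combine a subsequence argument, a reduction to probability measures, and a \emph{double} Skorokhod coupling, in order to trade the troublesome absolute value for an almost sure limit to which dominated convergence applies. Abbreviate $I_N:=\int_\cX\big|\int_\cY k(x,y)[\nu^N-\nu](\Rd y)\big|\,\mu^N(\Rd x)$, a genuine nonnegative random variable since $k$ is bounded and measurable. As $\cX$ and $\cY$ are Polish, the weak topologies on $\cM(\cX)$ and $\cM(\cY)$ are metrizable, so convergence in probability to the constant $0$ is equivalent to the property that every subsequence admits a further subsequence along which the convergence holds almost surely. Thus I would fix an arbitrary subsequence and, by two successive extractions, pass to a sub-subsequence along which $\mu^N\to\mu$ in $\cM(\cX)$ and $\nu^N\to\nu$ in $\cM(\cY)$ both hold weakly and almost surely; arguing pathwise then reduces the claim (again by the subsequence principle, applied to $I_N$) to the deterministic statement: \emph{if $\mu^N\to\mu$ and $\nu^N\to\nu$ weakly as finite measures, then $I_N\to0$}.

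For the deterministic statement I would first discard trivialities: weak convergence of finite measures forces the total masses to converge, so $\sup_N(\mu^N(\cX)\vee\nu^N(\cY))<\infty$ and $I_N\le\Ninf{k}\,(\nu^N(\cY)+\nu(\cY))\,\mu^N(\cX)$ stays bounded; in particular $I_N\to0$ immediately when $\mu(\cX)=0$ or $\nu(\cY)=0$. Assuming $\mu(\cX),\nu(\cY)>0$, the normalized measures $\hat\mu^N:=\mu^N/\mu^N(\cX)$ and $\hat\nu^N:=\nu^N/\nu^N(\cY)$ are defined for large $N$ and converge weakly to $\hat\mu,\hat\nu$. Writing $\langle\nu^N-\nu,k(x,\cdot)\rangle=\nu^N(\cY)\langle\hat\nu^N-\hat\nu,k(x,\cdot)\rangle+(\nu^N(\cY)-\nu(\cY))\langle\hat\nu,k(x,\cdot)\rangle$, integrating against $\mu^N$, and doing the analogous split on the $\mu$-side, one bounds $I_N$ by a term that vanishes plus a bounded multiple of $\int_\cX|\langle\hat\nu^N-\hat\nu,k(x,\cdot)\rangle|\,\hat\mu^N(\Rd x)$. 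So it remains to prove this last quantity tends to $0$, i.e.\ we may assume $\mu,\nu,\mu^N,\nu^N$ are all probability measures.

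The core of the proof is then a double Skorokhod representation: on a probability space $(\Omega_X,\PR_X)$ realize $(X^N)_N,X$ with $X^N\sim\hat\mu^N$, $X\sim\hat\mu$, $X^N\to X$ a.s., and on $(\Omega_Y,\PR_Y)$ realize $(Y^N)_N,Y$ with $Y^N\sim\hat\nu^N$, $Y\sim\hat\nu$, $Y^N\to Y$ a.s. On the product space $(\Omega_X\times\Omega_Y,\PR_X\otimes\PR_Y)$ the pairs $(X^N,Y^N)$, $(X^N,Y)$, $(X,Y)$ have laws $\hat\mu^N\otimes\hat\nu^N$, $\hat\mu^N\otimes\hat\nu$, $\hat\mu\otimes\hat\nu$, and both $(X^N,Y^N)\to(X,Y)$ and $(X^N,Y)\to(X,Y)$ hold almost surely. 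Since the discontinuity set $D_k$ of $k$ satisfies $(\mu\otimes\nu)(D_k)=0$, hence $(\hat\mu\otimes\hat\nu)(D_k)=0$ by equivalence of the two measures, $k$ is almost surely continuous at $(X,Y)$; therefore $k(X^N,Y^N)\to k(X,Y)$ and $k(X^N,Y)\to k(X,Y)$ a.s., so $|k(X^N,Y^N)-k(X^N,Y)|\to0$ a.s., and being dominated by $2\Ninf{k}$ it converges to $0$ in $L^1$. Finally, by Jensen's inequality and Fubini's theorem (with all expectations on $\Omega_X\times\Omega_Y$),
\[
\int_\cX\big|\langle\hat\nu^N-\hat\nu,k(x,\cdot)\rangle\big|\,\hat\mu^N(\Rd x)
\le\int_\cX\bE\big[|k(x,Y^N)-k(x,Y)|\big]\,\hat\mu^N(\Rd x)
=\bE\big[|k(X^N,Y^N)-k(X^N,Y)|\big]\longrightarrow0,
\]
which, with the reduction above, completes the proof.

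The single real obstacle is the absolute value in $I_N$. Pairing with a fixed continuous weight $h$ only produces convergence of $\int h(x)\langle\nu^N-\nu,k(x,\cdot)\rangle\,\mu^N(\Rd x)$, whereas the weight reconstructing $I_N$ is a sign function, which is neither continuous nor independent of $N$; and a one-sided coupling in the $\cY$-variable alone is insufficient, because the set of $x$ on which $|k(x,Y^N)-k(x,Y)|$ fails to be small may drift together with the mass of $\mu^N$. Coupling simultaneously in both variables on the product space pins the limit $(X,Y)$ off the $(\mu\otimes\nu)$-null discontinuity set of $k$ and reduces the whole statement to dominated convergence; the remaining pieces — the mass bookkeeping and the subsequence principle — are routine.
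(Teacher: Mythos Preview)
Your proof is correct and takes a genuinely different route from the paper's. Both arguments share the same outer shell: the subsequence principle to reduce from random to deterministic sequences (the paper's Step~5), and the trivial disposal of the degenerate cases $\mu(\cX)=0$ or $\nu(\cY)=0$. The divergence is in the core deterministic step.

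The paper works ``by hand'': it defines the pointwise error $\eps^N(x)=|\langle\nu^N-\nu,k(x,\cdot)\rangle|$, first shows $\langle\mu,\eps^N\rangle\to0$ via the Portmanteau theorem applied fiberwise, and then controls $\langle\mu^N,\eps^N\rangle-\langle\mu,\eps^N\rangle$ through an explicit analysis of the discontinuity sets $\cG[\delta,\eta]=\{(x,y):\,\mathrm{Diam}_k(B((x,y),\eta))>\delta\}$ and the level sets $\cH^N[\delta]=\{\eps^N\ge\delta\}$, using Markov's inequality and the inclusion $\cG[\delta,\eta]^\eta\subset\cG[\delta,2\eta]$ to transfer mass estimates from $\mu\otimes\nu$ to $\mu\otimes\nu^N$ and from $\mu$ to $\mu^N$. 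This is quantitative and avoids any coupling, but is considerably more laborious. Your argument instead normalizes to probability measures, invokes Skorokhod's representation theorem on each factor separately, and works on the product space: the crucial observation is that the absolute value in $I_N$ can be pushed inside via Jensen to yield $\bE|k(X^N,Y^N)-k(X^N,Y)|$, after which the $\mu\otimes\nu$-a.e.\ continuity of $k$ at the a.s.\ limit $(X,Y)$ plus dominated convergence finishes the job in one line. Your approach is shorter and conceptually cleaner; the paper's approach has the minor advantage of being more self-contained (no Skorokhod) and of exposing more explicitly where the a.e.\ continuity hypothesis enters, which is perhaps useful given that the same discontinuity-set machinery reappears in their Proposition~\ref{prop_om_cont}.
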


We are now ready for the estimate of  $\overline \mfE^{N, 1}$ and $\overline \mfE^{N, 0}$
provided in the next lemma.
\begin{lem}
	\label{lem_EN_bound}
For any $t>0$, 
the following random variable converges to 0 in probability as $N$ tends to infinity:
\begin{equation*}
\overline\mfE^N(t)
:=\langle \bar \mu^{S, N}_0, |\overline \mfE^{N, 1}(t, .)| + |\overline \mfE^{N, 0}(t, .)|\rangle\,.
\end{equation*}
\end{lem}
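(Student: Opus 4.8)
The idea is that $\overline\mfE^N(t)$ is an integral against the random measure $\bar\mu^{S,N}_0$ of a modulus-of-difference expression involving the random measures $\bar\mu^{S,N}_0$ and $\bar\mu^{I,N}_0$ tested against bounded kernels, and these are exactly the situations covered by Proposition~\ref{prop_cvg_mu0}. First I would split $\overline\mfE^N(t) \le \LAg \bar\mu^{S,N}_0, |\overline\mfE^{N,0}(t,\cdot)|\RAg + \LAg \bar\mu^{S,N}_0, |\overline\mfE^{N,1}(t,\cdot)|\RAg$ and treat the two terms separately, since $\overline\mfE^{N,0}$ in \eqref{eq_def_cEiN0t} is expressed through the difference $\bar\mu^{I,N}_0 - \bar\mu^I_0$, while $\overline\mfE^{N,1}$ in \eqref{eq_def_cEiNt} is expressed through $\bar\mu^{S,N}_0 - \bar\mu^S_0$.

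For the term involving $\overline\mfE^{N,0}(t,x)$, I would apply Proposition~\ref{prop_cvg_mu0} with $\cX = \bX$, $\cY = \bX\times\bR_+$, the outer measure sequence being $\mu^N = \bar\mu^{S,N}_0 \to \bar\mu^S_0 =: \mu$, the inner measure sequence being $\nu^N = \bar\mu^{I,N}_0 \to \bar\mu^I_0 =: \nu$ (both convergences in probability for weak convergence, by Assumption~\ref{hyp-initial}), and kernel
\[
k(x, (x', a')) = \bar\omega(x, x') \cdot \frac{\bar\lambda(x', a'+t)}{F^c_{x'}(a')}.
\]
This $k$ is bounded by $\lambda^*\omega^*$: the factor $\bar\omega$ is bounded by $\omega^*$ (Lemma~\ref{lem_om_st}), and $\bar\lambda(x', a'+t)/F^c_{x'}(a') \le \bar\lambda(x',a'+t)/F^c_{x'}(a'+t) = \chi(x',a'+t) \le \lambda^*$ since $F^c_{x'}$ is non-increasing (Assumption~\ref{hyp-eta} and the bound in Assumption~\ref{hyp-Blambda}). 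For the almost-everywhere continuity of $k$ with respect to $\mu\otimes\nu = \bar\mu^S_0 \otimes \bar\mu^I_0$: $\bar\omega$ is $\bar\mu_X^{\otimes 2}$-a.e.\ continuous by Assumption~\ref{hyp-w} and $\bar\mu^S_0 \otimes (\bar\mu^I_0)_{\bX} \ll \bar\mu_X^{\otimes 2}$, the map $(x',a')\mapsto \bar\lambda(x', a'+t)$ is continuous at $\bar\mu^I_0$-a.e.\ $(x',a')$ because $x'\mapsto\bar\lambda(x',\cdot)$ is $\bar\mu_X$-a.e.\ continuous for the uniform topology (Assumption~\ref{hyp-w}) and translation in the second argument is continuous on $\cD$ at continuity points, and $(x',a')\mapsto F^c_{x'}(a')$ is $\bar\mu^I_0$-a.e.\ continuous (Assumption~\ref{hyp-w}) and positive. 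Hence $k$ is continuous $\mu\otimes\nu$-a.e., Proposition~\ref{prop_cvg_mu0} applies, and $\LAg \bar\mu^{S,N}_0, |\overline\mfE^{N,0}(t,\cdot)|\RAg \to 0$ in probability.

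For the term involving $\overline\mfE^{N,1}(t,x)$, the same argument applies with $\cY = \bX$, $\nu^N = \bar\mu^{S,N}_0 \to \bar\mu^S_0$, and kernel $k(x,x') = \bar\omega(x,x')\cdot\bE[\bar\lambda(x', t-\wtd\tau_{x'})]$; note $\bE[\bar\lambda(x', t-\wtd\tau_{x'})] \le \lambda^*$ since $\bar\lambda \le \lambda^*$, so $k$ is bounded by $\lambda^*\omega^*$, and its $\bar\mu^S_0\otimes\bar\mu^S_0$-a.e.\ continuity follows from that of $\bar\omega$ together with the continuity in $x'$ of $\bE[\bar\lambda(x', t-\wtd\tau_{x'})]$ (the law of $\wtd\tau_{x'}$ depends on $x'$ only through $\overline\mfF(\cdot,x')$, which is $\bar\mu_X$-a.e.\ continuous by Lemma~\ref{lem_reg_bF}, and $\bar\lambda(x',\cdot)$ is $\bar\mu_X$-a.e.\ continuous by Assumption~\ref{hyp-w}, so one passes to the limit by bounded convergence). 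I expect the main obstacle to be precisely this verification of $\mu\otimes\nu$-a.e.\ continuity of the kernels — in particular reconciling the a.e.\ continuity hypotheses on $\bar\lambda$, $F^c$ and $\bar\omega$ (each stated with respect to a different marginal) to conclude joint a.e.\ continuity, and handling the fact that one factor of the product is itself the limit object rather than a fixed continuous function; once that is in place, the conclusion is an immediate application of Proposition~\ref{prop_cvg_mu0} and linearity of the bound $\overline\mfE^N(t) \le \LAg \bar\mu^{S,N}_0, |\overline\mfE^{N,0}(t,\cdot)|\RAg + \LAg \bar\mu^{S,N}_0, |\overline\mfE^{N,1}(t,\cdot)|\RAg$.
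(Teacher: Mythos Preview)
Your plan is correct and mirrors the paper's proof essentially line for line: both split $\overline\mfE^N(t)$ into the $\overline\mfE^{N,0}$ and $\overline\mfE^{N,1}$ parts and apply Proposition~\ref{prop_cvg_mu0} with exactly the choices of $(\cX,\cY,\mu^N,\nu^N,k)$ you describe, using the bound $\bar\lambda(x',a'+t)/F^c_{x'}(a')\le \bar\lambda(x',a'+t)/F^c_{x'}(a'+t)\le \lambda^*$ and Lemma~\ref{lem_reg_bF} for the a.e.\ continuity checks. The only cosmetic differences are that the paper defines the first kernel to be $0$ on the set $\{F^c_{x'}(a'+t)=0\}$ and handles continuity there separately, and that for the second kernel it writes out the explicit integral representation $k(x,x')=\bar\omega(x,x')\int_0^t \bar\lambda(x',t-s)\,\overline\mfF(s,x')\exp[-\int_0^s\overline\mfF(r,x')\,\Rd r]\,\Rd s$ rather than arguing through the law of $\wtd\tau_{x'}$ as you do.
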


\begin{proof}
		Recalling \eqref{eq_def_cEiN0t},
	the fact that 
	$\langle \bar \mu^{S, N}_0, |\overline \mfE^{N, 0}(t, .)|\rangle$
	converges to zero is a direct consequence of Proposition \ref{prop_cvg_mu0}
	with $\cY = \bX\times \RR_+$ and 
	\begin{equation*}
		\begin{split}
			\mu^N  := \bar \mu^{S, N}_0,\quad  
			\mu :=\bar \mu^{S}_0, \qquad	
			&k_t^0(x, x', a') = \begin{cases}
			&\bar\omega(x, x')\cdot \frac{\bar\lambda(x', a' + t)}{F_{x'}^{c}(a')} \quad
			\text{ if } F_{x'}^{c}(a')>0,
			\\& 0 \hcm{3.5} \text{otherwise,}
			\end{cases}
			\\	\nu^{N, 0}(\Rd x', \Rd a'):= \bar \mu^{I, N}_0(\Rd x', \Rd a'), \quad 
			&\nu^0(\Rd x', \Rd a') :=  \bar\mu^I_0(\Rd x',\Rd a') \,.
		\end{split}
	\end{equation*}
Assumption \ref{hyp-initial} 
states that $\mu^N$ and $\nu^{N, 0}$ converge weakly to respectively $\mu$ and $\nu^0$.
We recall the upper-bound of $\bar{\lambda}(x',a'+t)/F^c_{x'}(a')$
by $\lambda^*$ as a consequence of Assumptions~\ref{hyp-eta} and \ref{hyp-Blambda}.
We recall also the uniform upper-bound $\omega^*$ of $\bar \omega^N$,
as defined in Assumption~\ref{hyp_ws}.
$\omega^*\cdot \lambda^*$ is thus  a global upper-bound of $k_t^0$.

By virtue of Assumption~\ref{hyp-w} and since $t>0$,
the following property holds
for $(\mu\otimes \nu^0)$ almost every triplet $(x, x', a')$ such that  $F^c_{x'}(a') = 0$:
$F^c_{\check x'}(\check a'+t) = 0$ holds for any $(\check x, \check x', \check a')$ in a small enough neighborhood of $(x, x', a')$,
thus also $\bar\lambda(\check x', \check a' + t) = 0$ 
and $k(\check x, \check x', \check a') = 0$. 

The following property holds on the other hand, 
for $(\mu\otimes \nu^0)$ almost every triplet $(x, x', a')$ such that  $F^c_{x'}(a') > 0$:
$F^c_{\check x'}(\check a') > 0$ holds for any $(\check x, \check x', \check a')$ in a small enough neighborhood of $(x, x', a')$.
Still by virtue of Assumption~\ref{hyp-w}
on $\bar \lambda$ and $\bar \mu_0^I$,
the function $:(x', a')\mapsto \bar \lambda(x', a'+t)$ is $\nu^0$-a.e. continuous.
We thus conclude that $k_t^0$ is $(\mu\otimes \nu^0)$-a.e. continuous. 

Proposition \ref{prop_cvg_mu0} does therefore entail that $\langle \bar \mu^{S, N}_0, |\overline \mfE^{N, 0}(t, .)|\rangle$
tends to zero.

\medskip

Recalling \eqref{eq_def_cEiNt},
the fact that $\langle \bar \mu^{S, N}_0, |\overline \mfE^{N, 1}(t, .)|\rangle$
	converges to zero is a consequence as well  of Proposition \ref{prop_cvg_mu0}	
	with this time $\cY = \bX$  and 
	\begin{equation*}
	\mu^N = \nu^{N, 1} := \bar \mu^{S, N}_0,\quad  \mu = \nu^1 = \bar \mu^{S}_0, \quad
k^1_t(x, x') = \bar\omega(x, x')\cdot \bE[\bar\lambda(x', t-\wtd \tau_{x'})]\,.
	\end{equation*}
	The fact that $\mu^N$ and $\nu^{N, 1}$ converge weakly in probability to $\mu= \nu^1$
follows from Assumption~\ref{hyp-initial}.
	$k_t^1$ is bounded under Assumptions~\ref{hyp-lambda} and \ref{hyp-w}.
	To check that $k^1_t$ is $(\mu\otimes \nu^1)$ a.e. continuous,
	 we exploit the following alternative expression:
	\begin{equation*}
k_t^1(x, x') = \bar\omega(x, x') \int_0^t \bar\lambda(x', t-s)\cdot \overline \mfF(s, x')\cdot \exp\Big[-\int_0^s\overline \mfF(r, x')\Rd r\Big]\,\Rd s\,,
	\end{equation*}
	derived with the same argument as for	\eqref{eq_def_alt_EN}.
	Thanks to Assumptions~\ref{hyp-lambda} and \ref{hyp-w} and Lemma \ref{lem_reg_bF},
	we can conclude that $k^1$ is $(\mu\otimes \nu^1)$ a.e. continuous,
	as stated in Lemma~\ref{lem_ae_cont} in Appendix~\ref{sec:appendix}.
	
Proposition \ref{prop_cvg_mu0} does therefore entail that $\langle \bar \mu^{S, N}_0, |\overline \mfE^{N, 1}(t, .)|\rangle$
		tends to zero.
\end{proof}

With Lemmas \ref{lem_AN_bound}, \ref{lem_VN_bound}, \ref{lem_LN_bound} and \ref{lem_EN_bound}, 
we are ready to prove the following comparison result with the original model.
\begin{prop}
	\label{prop_cvg_DN}
As $N\to \infty$, $\mfD(t)$ defined in \eqref{eq_def_mfD} converges in probability to 0 
locally uniformly in $t$.
\end{prop}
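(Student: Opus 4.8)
The plan is to close a Gronwall inequality directly on $\overline\mfD^N$, and then to use the estimates of Lemmas~\ref{lem_dec_FN}, \ref{lem_AN_bound}, \ref{lem_VN_bound}, \ref{lem_LN_bound} and \ref{lem_EN_bound} to check that the resulting forcing term vanishes in probability. First I would exploit that, for each $i\in\cS^N(0)$, the processes $D_i^N$ and $\wtd D_i^N$ are driven by the \emph{same} Poisson random measure $Q_i$: both are $\{0,1\}$-valued, nondecreasing, start at $0$, and jump (at most once) to $1$. Up to $\tau_i^N\wedge\wtd\tau_i^N$ they coincide, hence react to the same atoms of $Q_i$, and the first atom $(s,u)$ with $u\le\overline\mfF_i^N(s^-)\vee\overline\mfF(s,X_i^N)$ produces the first jump; the two processes then disagree (equivalently $\sup_{r\le t}|D_i^N(r)-\wtd D_i^N(r)|=1$ for that $t$) precisely when this atom occurs before time $t$ and falls in the symmetric-difference band $\overline\mfF_i^N(s^-)\wedge\overline\mfF(s,X_i^N)<u\le\overline\mfF_i^N(s^-)\vee\overline\mfF(s,X_i^N)$, necessarily while both processes still equal $0$. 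This yields the pathwise bound
\[
\sup_{r\le t}\big|D_i^N(r)-\wtd D_i^N(r)\big|\le \int_0^t\!\!\int_0^\infty \idc{D_i^N(s^-)=0}\,\idc{\wtd D_i^N(s^-)=0}\,\idc{\overline\mfF_i^N(s^-)\wedge\overline\mfF(s,X_i^N)<u\le\overline\mfF_i^N(s^-)\vee\overline\mfF(s,X_i^N)}\,Q_i(\Rd s,\Rd u).
\]
The integrand is predictable for the filtration of the whole system, for which $Q_i$ is still a Poisson random measure with intensity $\Rd s\,\Rd u$ by the independence assumptions on $(Q_j)$; so I would take $\bE_0^N$, replace $Q_i$ by its intensity, integrate out $u$ using $\int_0^\infty\idc{a<u\le b}\,\Rd u\le|a-b|$, drop the remaining indicators, sum over $i$ and divide by $N$ to obtain
\[
\overline\mfD^N(t)\ \le\ \int_0^t \bE_0^N\!\Big[\tfrac1N\!\!\sum_{i\in\cS^N(0)}\!\big|\overline\mfF_i^N(s)-\overline\mfF(s,X_i^N)\big|\Big]\,\Rd s .
\]

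Next I would bound the integrand by plugging in the decomposition of $\overline\mfF_i^N(s)-\overline\mfF(s,X_i^N)$ into seven terms from Lemma~\ref{lem_dec_FN}. By the triangle inequality, Lemmas~\ref{lem_AN_bound}, \ref{lem_VN_bound}, \ref{lem_LN_bound}, \ref{lem_EN_bound}, and the identities $\tfrac1N\sum_{i\in\cS^N(0)}\phi(X_i^N)=\langle\bar\mu^{S,N}_0,\phi\rangle$ and $|\cS^N(0)|\le N$, the integrand is at most $C\,\overline\mfD^N(s)+g^N(s)$ with $C=\lambda^*\omega^*$ (the only $\overline\mfD^N$-contribution coming from the $\overline\mfA^N$ term via Lemma~\ref{lem_AN_bound}) and
\[
g^N(s):=3\,\overline\mfV^N+\lambda^*\,\Ninf{\bar\omega^N-\bar\omega}+\overline\mfE^N(s),
\]
where $\overline\mfV^N$ is as in \eqref{eq_def_mfVN} and $\overline\mfE^N(s)=\langle\bar\mu^{S,N}_0,\,|\overline\mfE^{N,1}(s,\cdot)|+|\overline\mfE^{N,0}(s,\cdot)|\rangle$ is the quantity of Lemma~\ref{lem_EN_bound}. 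Thus $\overline\mfD^N(t)\le\int_0^t\big(C\,\overline\mfD^N(s)+g^N(s)\big)\,\Rd s$, and since $0\le\overline\mfD^N\le1$ and $g^N\in L^1_{\mathrm{loc}}$, Gronwall's inequality gives, for every $T>0$,
\[
\sup_{t\le T}\overline\mfD^N(t)\ \le\ e^{CT}\!\int_0^T g^N(s)\,\Rd s\ =\ e^{CT}\Big(T\big(3\,\overline\mfV^N+\lambda^*\Ninf{\bar\omega^N-\bar\omega}\big)+\int_0^T\overline\mfE^N(s)\,\Rd s\Big).
\]

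Finally I would send $N\to\infty$ in the right-hand side. By \eqref{eq_def_mfVN} and Assumption~\ref{hyp-Var}, $\overline\mfV^N\to0$ in probability; by Assumption~\ref{hyp-w}, $\Ninf{\bar\omega^N-\bar\omega}\to0$; and since $0\le\overline\mfE^N(s)\le 4\,\omega^*\lambda^*$ uniformly in $s$ and $N$ (the masses of $\bar\mu^{S,N}_0$ and $\bar\mu^I_0$ being at most $1$) while $\overline\mfE^N(s)\to0$ in probability for each $s$ by Lemma~\ref{lem_EN_bound}, bounded convergence gives $\bE[\overline\mfE^N(s)]\to0$ and then dominated convergence gives $\bE\big[\int_0^T\overline\mfE^N(s)\,\Rd s\big]\to0$, so $\int_0^T\overline\mfE^N(s)\,\Rd s\to0$ in probability. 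Hence $\int_0^T g^N(s)\,\Rd s\to0$ in probability, and therefore $\sup_{t\le T}\overline\mfD^N(t)\to0$ in probability for every $T>0$, which is exactly the asserted locally uniform convergence. The delicate step is the first one: establishing the pathwise coupling bound with the correct family of indicators — in particular arguing that a discrepancy between $D_i^N$ and $\wtd D_i^N$ can only be initiated by an atom of $Q_i$ in the symmetric-difference band occurring while both processes still equal $0$ — and justifying the replacement of $Q_i$ by its compensator even though $\overline\mfF_i^N$ is measurable with respect to all the other Poisson measures, the infectivity functions and the random graph; once the above integral inequality is secured, the remaining steps are routine.
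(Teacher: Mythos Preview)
Your proposal is correct and follows essentially the same route as the paper: the pathwise coupling bound via the shared Poisson measure $Q_i$, replacement of $Q_i$ by its compensator, the seven-term decomposition of Lemma~\ref{lem_dec_FN}, the bounds from Lemmas~\ref{lem_AN_bound}--\ref{lem_EN_bound}, and Gronwall. The only notable difference is in how locally uniform convergence is obtained: the paper proves pointwise convergence of $\overline\mfD^N(t)$ and then invokes the monotonicity of $t\mapsto\overline\mfD^N(t)$ together with a Dini-type lemma (Lemma~\ref{lem_loc_unif_cvg}), whereas you read it off directly from the Gronwall bound $\sup_{t\le T}\overline\mfD^N(t)\le e^{CT}\int_0^T g^N(s)\,\Rd s$, which is slightly more direct.
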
 

\begin{proof}
First note that it suffices to prove the convergence for any fixed $t$. 
The locally uniform convergence then follows from Lemma~\ref{lem_loc_unif_cvg} in Appendix~\ref{sec:appendix}
since $:t\mapsto \mfD(t)$
is a.s. non-decreasing for any $N$.
For any $i$ and $t$, 
\begin{equation*}
\sup_{r\le t}|D^N_i(r) - \wtd D^N_i(r)|
\le \int_0^t \int_{0}^\infty
\idc{u\in \cB_i^N(s)} Q_i(\Rd s, \Rd u),
\end{equation*}
where the interval $\cB_i^N(s)$ is defined as follows:
\begin{equation*}
\cB_i^N(s) = \big[\overline\mfF_i^N(s) \wedge \overline{\mfF}(s,X_i^N),\,
\overline\mfF_i^N(s) \vee \overline{\mfF}(s,X_i^N)\big],
\end{equation*}
with a length equal to $\big|\overline\mfF_i^N(s) - \overline{\mfF}(s,X_i^N)\big|$.
Summing over $i$ and taking expectation on the $(Q_i)_{i\le N}$,
we obtain
\begin{equation}
\mfD(t)
\le \bE_0^N\Big[\frac1N\sum_{i\in \cS^N(0)} \int_0^t|\overline\mfF_i^N(s) - \overline{\mfF}(s, X_i^N)| \Rd s\Big]\,.
\label{eq_prop_DF}
\end{equation}
Starting from \eqref{eq_prop_DF}, for any $i$ and $t$, we decompose the integrand
into seven terms  according to Lemma \ref{lem_dec_FN}.
Five among these terms are first treated 
thanks to Lemmas \ref{lem_AN_bound}, \ref{lem_VN_bound} and \ref{lem_LN_bound},
so that there exists $C>0$, 
 $\overline\mfV^N = \lambda^*\cdot \sqrt{\Upsilon^N	+ \omega^*\cdot \bar \gamma^N}$
 	 and $\overline\mfL^N = \lambda^*\cdot \Ninf{\bar \omega^N	- \bar \omega}$,
 	the later two converging in probability to zero by virtue respectively of Assumptions~\ref{hyp-Var}
 	and \ref{hyp-w},
 	 such that 
\begin{equation}
\mfD(t)
\le C\cdot \int_0^t\mfD(s)
+ t\cdot (2\overline\mfV^N + \overline\mfL^N) + \int_0^t  \langle \bar \mu^{S, N}_0, |\overline \mfE^{N, 1}(s, .)| + |\overline \mfE^{N, 0}(s, .)|\rangle \Rd s\,.
\label{eq_prop_Gronw}
\end{equation}
Thanks to Lemma \ref{lem_EN_bound},
the integrand $(\langle \bar \mu^{S, N}_0, |\overline \mfE^{N, 1}(s, .)| + |\overline \mfE^{N, 0}(s, .)|\rangle)_{s\in [0, t]}$
converges to 0 pointwise in probability for any $s>0$.
By virtue of Assumptions~\ref{hyp-Blambda}
and \ref{hyp_ws}, for the defining properties of $\lambda^*$ and $\omega^*$, we obtain that 
\begin{equation*}
|\overline \mfE^{N, 1}(s, x)| \vee |\overline \mfE^{N, 0}(s, .)| \le \omega^*\, \lambda^*,
\end{equation*}
holds for any $s\ge 0$ and any $x\in \bX$. 
Since $\bar\mu_0^{S, N}(\bX)\le 1$,
the integrand is itself upper-bounded by $\omega^*\, \lambda^*$. 
By Lebesgue's dominated convergence theorem,
we deduce the convergence to 0 in probability as $N$ tends to infinity
of the following random variable for any $t$, the r.v. being non-decreasing with $t$:
\begin{equation*}
\int_0^t  \langle \bar \mu^{S, N}_0, |\overline \mfE^{N, 1}(s, .)| + |\overline \mfE^{N, 0}(s, .)|\rangle \Rd s\,.
\end{equation*}
With \eqref{eq_prop_Gronw}, we are therefore in situation to apply
Gronwall's inequality and conclude Proposition \ref{prop_cvg_DN}
in that $\mfD(t)$ converges to 0
in probability for any $t$
as $N\to \infty$.
\end{proof}

Before proceeding, 
let us state a result 
which is exactly Theorem~II.4.1 in \cite{perkins}
and will be needed in the next proof.
	Let us generally consider a Polish space $\cY$.
We say that a subset $\mfM$ of $\cC_b(\cY)$
is separating if $(i)$ it includes the constant function $:x\mapsto 1$ and $(ii)$ 
it discriminates elements of $\cM_1(\cY)$,
in that for any $(\nu, \nu') \in \cM_1(\cY)^2$,
$\nu = \nu'$ is equivalent to the property
that $\langle \nu, \phi\rangle = \langle \nu, \phi\rangle$ for any $\phi \in \mfM$.
It is well known that $\cC_b(\cY)$ itself is separating.
A sequence of random elements of $\bD(\mathbb{R}_+, \cM(\cY))$  is said to be C-tight if it is tight and any limit of a converging subsequence is a.s. continuous.
\begin{prop}\label{prop_perkins}
	A sequence of processes
	$(\nu^N)_{n\in \mathbb{N}^*}$ is C-tight  in $\bD(\mathbb{R}_+, \cM_1(\cY))$ if and only if:
	\begin{enumerate}[(a)]
		\item\label{it:CCC} \textbf{Compact Containment Condition (CCC).} For all
		$\varepsilon>0$ and $T$, there exists a compact set $K_{\varepsilon}$ in $\cY$ such that:
		\[
		\sup_{N\in \mathbb{N} ^*} \mathbb{P}\left(\sup_{t\le T} \nu^N_t (K_{
			\varepsilon}^c) >\varepsilon\right)<\varepsilon.
		\]
		
		\item  \label{it:proj}  \textbf{Tightness  of the  projections.}
		The sequence
		$(\langle \nu^N_{\,\,\,\cdot},  \varphi \rangle)_{N\in \mathbb{N}^*}$  is C-tight
		in $\bD(\mathbb{R}_+, \mathbb{R})$
		for any function $\varphi$ in a separating class $\mfM$. 
	\end{enumerate}
\end{prop}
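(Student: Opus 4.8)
This statement is exactly Theorem~II.4.1 in \cite{perkins}, to which one can simply refer; for completeness the plan is to indicate the structure of the argument. Throughout, $\cX$ is Polish, $\cM(\cX)$ carries the topology of weak convergence, and $\cD = \cD(\bR_+, \cM(\cX))$ the Skorokhod topology.

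Necessity of \ref{it:CCC} and \ref{it:proj} is the easy direction. For \ref{it:proj}, one uses that $\nu \mapsto \langle \nu, \varphi\rangle$ is continuous from $\cM(\cX)$ to $\bR$ for each $\varphi \in \cC_b(\cX)$, hence induces a continuous map $\cD \to \cD(\bR_+, \bR)$, and continuous images of C-tight sequences are C-tight. For \ref{it:CCC}, a C-tight, hence tight, sequence is, up to probability $\varepsilon$, supported on a compact $\cK \subset \cD([0,T], \cM(\cX))$; evaluation at times $t \le T$ being continuous on $\cK$, the set $\{w_t : w \in \cK,\ t \le T\}$ is relatively compact in $\cM(\cX)$, hence tight by Prokhorov's theorem, which is precisely \ref{it:CCC}.

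The substantive direction is sufficiency. The plan is to prove, for each fixed $T$, tightness of $(\nu^N)$ in $\cD([0,T], \cM(\cX))$ via Jakubowski's criterion, which for the completely regular space $E = \cM(\cX)$ requires (i) a compact containment condition in $E$ and (ii) tightness in $\cD([0,T], \bR)$ of $(f(\nu^N))$ for $f$ ranging over a point-separating, addition-closed family of continuous functionals on $E$. For (i): since the constant $\mathbf{1} \in \mfM$, hypothesis \ref{it:proj} makes the total-mass processes tight, so $\sup_N \PR(\sup_{t \le T}\langle \nu^N_t, \mathbf{1}\rangle > M) \to 0$ as $M \to \infty$; combining this mass bound with \ref{it:CCC} applied along a sequence $\varepsilon_k \downarrow 0$ yields, for each $\delta$, a set of the form $\{\mu : \mu(\cX) \le M,\ \mu(K_{\varepsilon_k}^c) \le \varepsilon_k \ \forall k\}$ which is compact in $\cM(\cX)$ by Prokhorov and contains $\nu^N_t$ for all $t \le T$ with probability $\ge 1-\delta$, uniformly in $N$. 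For (ii): one works with the additive semigroup generated by $\mfM$; its functionals $\nu \mapsto \sum_{i=1}^k \langle \nu, \varphi_i\rangle$ are continuous on $\cM(\cX)$, and since the $(\langle \nu^N, \varphi_i\rangle)$ are C-tight and driven by the same $\nu^N$, the $\bR^k$-valued sequence of projections is C-tight, hence so is each sum by continuity of addition, in particular tight. Jakubowski's theorem then gives tightness of $(\nu^N)$ in $\cD([0,T], \cM(\cX))$.

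Finally, tightness is upgraded to C-tightness: for each $\varphi \in \mfM$ the induced map $w \mapsto \langle w_\cdot, \varphi\rangle$ is continuous from $\cD([0,T], \cM(\cX))$ to $\cD([0,T], \bR)$, so any subsequential limit $\nu$ of $(\nu^N)$ has $\langle \nu_\cdot, \varphi\rangle$ a subsequential limit of the C-tight sequence $(\langle \nu^N_\cdot, \varphi\rangle)$, hence with continuous paths; since $\mfM$ separates points of $\cM(\cX)$ (using $\mathbf{1}\in\mfM$ to fix the total mass), this forces $\nu$ to have no jumps, and a tight sequence all of whose limit points lie in $C([0,T], \cM(\cX))$ is C-tight. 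The hard part is the sufficiency direction, and within it the infinite-dimensionality of $E = \cM(\cX)$: it is not locally compact, so one genuinely needs \ref{it:CCC} together with Prokhorov's description of compact sets of measures in order to reduce path-space tightness to the one-dimensional projections via Jakubowski's criterion; everything else is continuous-mapping bookkeeping.
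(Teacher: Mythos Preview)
The paper does not give its own proof of this proposition: it simply records the statement and attributes it verbatim to Theorem~II.4.1 in \cite{perkins}. Your proposal does the same, citing the reference up front, and then goes further by supplying a correct outline of the argument (necessity via continuous mapping and Prokhorov, sufficiency via Jakubowski's criterion combined with the compact containment condition and the total-mass bound from $\mathbf{1}\in\mfM$, and the upgrade to C-tightness through the separating class); this is consistent with and more informative than the paper's treatment.
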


We can then conclude to the convergence of the measure 	$\bar \mu^{S,N}$:
\begin{prop}\label{prop_cvg_muSN}
	As $N\to\infty$, the following convergence holds  in probability 
	\[
	\bar \mu^{S,N}\to \bar\mu^S \quad\mbox{in}\quad \bD(\RR_+, \cM(\bX))\,.
	\]
\end{prop}
\begin{proof}
We apply Proposition~\ref{prop_perkins}
with $\cY = \bX$, $\nu^N= \bar \mu^{(S, N)}$ and $\mfM = \cC_{b+}(\bX)$
the set of bounded continuous functions from $\bX$ to $\RR_+$. 
$\cC_{b+}(\bX)$ is separating 
	since $\cC_{b}(\bX)$ is itself separating. 
Point $(a)$ follows readily from the two facts:  $\bar\mu^{(S, N)}_t (K_{\varepsilon}^c)\le\bar\mu^{(S, N)}_0 (K_{\varepsilon}^c)$,
and $\bar\mu^{(S, N)}_0\Rightarrow\bar\mu^{S}_0$ in probability
(exploiting for instance the Lévy-Prokhorov metric).

Concerning Point $(b)$,
let us consider any $\varphi\in \cC_{b,+}(\bX)$.
Recall that from Lemma~\ref{lem_cvg_wmu_S}  $\langle \wtd\mu^{(S, N)}_{t},  \varphi \rangle$ converges in probability to $\langle \bar\mu^{S}_{t},  \varphi \rangle$ 
for any $t>0$.
Moreover, by definition of $\wtd\mu^{(S, N)}$ and $\mfD$ in respectively \eqref{eq_def_tmuS} and \eqref{eq_def_mfD}:
\begin{equation}
\big|\langle \bar\mu^{(S, N)}_{t},  \varphi \rangle
- \langle \wtd\mu^{(S, N)}_{t},  \varphi \rangle\big|
\le \Ninf{\varphi}\cdot \mfD(t),
\end{equation}
with an upper-bound that converges to 0  in probability 
locally uniformly in $t$ thanks to Proposition \ref{prop_cvg_DN}.
Therefore, $\langle \bar\mu^{(S, N)}_{t},  \varphi \rangle$ converges in probability to $\langle \bar\mu^{S}_{t},  \varphi \rangle$ 
pointwise for any $t>0$.
In addition, since $\varphi$ is non-negative,
$t\to\langle \bar\mu^{(S, N)}_{t},  \varphi \rangle$ is non-increasing
for each $N\ge1$.
Also, $t\to\langle \bar\mu^{S}_{t},  \varphi \rangle$ is continuous.
Thanks to Lemma~\ref{lem_loc_unif_cvg}
in Appendix~\ref{sec:appendix},
the convergence in probability of $\langle \bar\mu^{(S, N)}_{t},  \varphi \rangle$ 
to $\langle \bar\mu^{S}_{t},  \varphi \rangle$
therefore holds locally uniform in $t$.

Thanks to Proposition~\ref{prop_perkins}, the sequence
$(\bar\mu^{(S, N)})_{N\in \mathbb{N}^*}$ is thus C-tight.
By the convergence of the projection, 
any limit point is necessarily $\bar\mu^{(S)}_{\,\,\,\cdot}$,
which concludes that $\bar\mu^{(S, N)}$ converges to $\bar\mu^{(S)}$ in $\bD(\RR_+, \cM(\bX))$. 
\end{proof}

The above arguments directly entail the following
pointwise in time convergence result on the force of infection. 
\begin{prop}\label{prop_cvg_mF}
The following convergence to 0 
 holds a.s. for any $t>0$:
\begin{equation}
	\lim_{N\to \infty} 
	\bE_0^N\Bigg[ \frac1N	\sum_{i\in \cS^N(0)}	\Big|\overline\mfF_i^N(t) - \overline\mfF(t, X_i^N) \Big|\Bigg]
	=0.
	\label{eq_cvg0_DF}
\end{equation}
\end{prop}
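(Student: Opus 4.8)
The plan is to revisit the decomposition of $\overline\mfF_i^N(t) - \overline\mfF(t, X_i^N)$ established in Lemma~\ref{lem_dec_FN} and to bound the conditional-expectation of the normalized sum of absolute values of each of the seven terms appearing there, exactly as was already done in the course of proving Proposition~\ref{prop_cvg_DN}. The point is that the quantity in~\eqref{eq_cvg0_DF} is precisely the integrand that was integrated against $\Rd s$ in~\eqref{eq_prop_DF}, so no new estimates are needed: every bound is already at hand.

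First I would apply Lemma~\ref{lem_dec_FN} to write, for fixed $t>0$,
\begin{equation*}
\bE_0^N\Bigg[\frac1N\sum_{i\in\cS^N(0)}\big|\overline\mfF_i^N(t)-\overline\mfF(t,X_i^N)\big|\Bigg]
\le I_\mfA^N + I_\mfV^N + I_\mfL^N + I_\mfE^N,
\end{equation*}
where $I_\mfA^N$ collects the $\overline\mfA_i^N(t)$ term, $I_\mfV^N$ the two terms $\overline\mfV_i^{N,1}(t)$ and $\overline\mfV_i^{N,0}(t)$, $I_\mfL^N$ the two terms $\overline\mfL^{N,1}(t,X_i^N)$ and $\overline\mfL^{N,0}(t,X_i^N)$, and $I_\mfE^N$ the two terms $\overline\mfE^{N,1}(t,X_i^N)$ and $\overline\mfE^{N,0}(t,X_i^N)$. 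By Lemma~\ref{lem_AN_bound}, $I_\mfA^N\le C\cdot\overline\mfD^N(t)+\overline\mfV^N$; by Lemma~\ref{lem_VN_bound}, $I_\mfV^N\le 2\overline\mfV^N$; by Lemma~\ref{lem_LN_bound}, $I_\mfL^N\le 2\lambda^*\Ninf{\bar\omega^N-\bar\omega}=2\overline\mfL^N$; and since $\bar\mu_0^{S,N}(\bX)\le 1$, one has $I_\mfE^N\le\LAg\bar\mu^{S,N}_0,|\overline\mfE^{N,1}(t,.)|+|\overline\mfE^{N,0}(t,.)|\RAg=\overline\mfE^N(t)$. Altogether,
\begin{equation*}
\bE_0^N\Bigg[\frac1N\sum_{i\in\cS^N(0)}\big|\overline\mfF_i^N(t)-\overline\mfF(t,X_i^N)\big|\Bigg]
\le C\cdot\overline\mfD^N(t)+3\overline\mfV^N+2\overline\mfL^N+\overline\mfE^N(t).
\end{equation*}

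It then remains to observe that each term on the right-hand side tends to $0$ in probability as $N\to\infty$: $\overline\mfD^N(t)\to 0$ by Proposition~\ref{prop_cvg_DN}, $\overline\mfV^N\to 0$ by Assumption~\ref{hyp-Var} together with Lemma~\ref{lem_om_st} (recalling the definition of $\overline\mfV^N$ in~\eqref{eq_def_mfVN}), $\overline\mfL^N\to 0$ by Assumption~\ref{hyp-w}, and $\overline\mfE^N(t)\to 0$ by Lemma~\ref{lem_EN_bound}. Since the left-hand side is a non-negative random variable dominated by a sum of terms each converging to $0$ in probability, it converges to $0$ in probability; but it is in fact $\cF_0^N$-measurable (being a conditional expectation given $\cF_0^N$), so convergence in probability of this sequence of conditional expectations is exactly the assertion~\eqref{eq_cvg0_DF}. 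There is no real obstacle here: the only subtlety is the bookkeeping of which lemma controls which of the seven terms, and the remark that the bound is uniform enough that no integration in time is required, so the pointwise-in-$t$ statement follows immediately from the ingredients already assembled for Proposition~\ref{prop_cvg_DN}.
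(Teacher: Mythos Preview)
Your proposal is correct and follows essentially the same route as the paper: you invoke Lemma~\ref{lem_dec_FN} to split $\overline\mfF_i^N(t)-\overline\mfF(t,X_i^N)$ into seven pieces, bound them via Lemmas~\ref{lem_AN_bound}, \ref{lem_VN_bound}, \ref{lem_LN_bound}, \ref{lem_EN_bound}, and then appeal to Proposition~\ref{prop_cvg_DN} and Assumptions~\ref{hyp-w}, \ref{hyp-Var} for the vanishing of each contribution. The only cosmetic discrepancy is that Lemma~\ref{lem_LN_bound} already bounds the \emph{sum} $|\overline\mfL^{N,1}|+|\overline\mfL^{N,0}|$ by $\lambda^*\Ninf{\bar\omega^N-\bar\omega}$, so your $2\overline\mfL^N$ is a harmless overcount; the paper records the constant without the extra factor of two.
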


\begin{proof}
As a consequence of Lemmas \ref{lem_dec_FN}, 
\ref{lem_AN_bound}, \ref{lem_VN_bound},
\ref{lem_LN_bound} and \ref{lem_EN_bound}, we obtain
\begin{equation*}
\bE_0^N\Bigg[ \frac1N	\sum_{i\in \cS^N(0)}	\Big|\overline\mfF_i^N(t) - \overline\mfF(t, X_i^N) \Big|\Bigg]
\le C\cdot \mfD(t) 
+ 3 \overline \mfV^N
+ \lambda^*\cdot \Ninf{\bar \omega^N	- \bar \omega}
+ \overline\mfE^N(t)\,
\end{equation*}
where $C, \lambda^*<\infty$ while
$\mfD(t)$,
$\overline \mfV^N$,
$\Ninf{\bar \omega^N	- \bar \omega}$
and $\overline\mfE^N(t)$
all tend to 0 
thanks respectively to Proposition \ref{prop_cvg_DN},
 Assumption~\ref{hyp-Var},
 Assumption~\ref{hyp-w},
and  Lemma~\ref{lem_EN_bound}.
This concludes the proof of Proposition~\ref{prop_cvg_mF}.
\end{proof}

	As a consequence of Propositions~\ref{prop_cvg_DN} and \ref{prop_cvg_mF},
by exploiting a similar approach as for the convergence to $\bar\mu^S_t$
in Lemma~\ref{lem_cvg_wmu_S},
we could typically prove the following pointwise in time convergence in probability
in terms of any test function $\varphi \in C_b(\bX)$:
\begin{equation*}
\sum_{i\le N} \overline\mfF_i^N(t)\, \idc{D_i^N(t)= 0}\, \varphi(X^N_i)
\to \langle \overline \mfF(t) \bar\mu^S_t, \varphi \rangle\,.
\end{equation*}
The test function $\varphi$ evaluates here the convergence 
	of a distribution on $\bX$ that we call the activated force of infection.
The measure on the left-hand side can be interpreted as minus the derivative 
of the process $\bar \mu^{N,S}$ at time $t$.

\medskip

\subsection{Convergence of $(\bar \mu^{I, N}_{\cdot}, \bar \mu^{R, N}_{\cdot})$}

Because the proof is simpler and more related to the one of Proposition \ref{prop_cvg_muSN}
we first justify in the next proposition
the convergence of the LLN-scaled recovered process $\bar\mu^{R,N}_t$,
before we treat similarly in Proposition \ref{prop_cvg_muIN}
the process  $\bar\mu^{I,N}_t$.
\begin{prop}\label{prop_cvg_muRN}
	As $N\to\infty$, the following convergence holds  in probability 
\[
\bar \mu^{R,N}\to \bar\mu^R \quad\mbox{in}\quad \bD(\RR_+, \cM(\bX))\,.
\]
\end{prop}

\begin{proof}
We distinguish three components depending on the initial condition 
of the individuals:
\begin{equation}\label{eq_dec_muRN}
\bar\mu^{R,N}_t=   \bar\mu^{R,N}_0 + \bar\mu^{R,N,0}_t + 	\bar\mu^{R,N,1}_t\,,
\end{equation}
where the measures $\bar\mu^{R,N,0}$  and $\bar\mu^{R,N,1}$ act as follows
on test functions $\varphi \in C_b( \RR_+)$ and time $t\ge0$:
\begin{equation}	\label{eq_def_barmuRNt0}
	\langle\bar\mu^{R,N,0}_t, \varphi\rangle =  \frac{1}{N}	\sum_{j\in \cI^N(0)} \idc{\eta_j^{N, 0} \le t} \varphi(X^N_j) \,,
\end{equation}
and 
\begin{equation}	\label{eq_def_barmuRNt1}
	\langle\bar\mu^{R,N,1}_t, \varphi\rangle = \frac{1}{N}
	\sum_{i\in \sD^N(t)} \idc{\tau^N_i +\eta^N_i\le t} \varphi(X^N_i)\,. 
\end{equation}
We recall  that $\sD^N(t)$ is defined in \eqref{eq_def_DN} as the subset in $\cS^{N}_0$ 
	of individuals infected by the disease  by time $t$.
The convergence for the first term $\langle \bar\mu^{R,N}_0, \varphi \rangle$ to $\langle \bar\mu^{R}_0, \varphi \rangle$ is part of Assumption \ref{hyp-initial}. 

Concerning $\bar\mu^{R,N,0}$, 
we will justify the convergence in probability 
through the computation of the expectations and variances,
conditional on $\cF^N_0$.
\begin{equation}
\bE_0^N\Big[\langle\bar\mu^{R,N,0}_t, \varphi\rangle\Big]
= \int_0^\infty\hcm{-0.3} \int_{\bX} \varphi(x)  \bigg(1-\frac{F^c_x(a+t)}{F^c_x(a)} \bigg)
\bar\mu_0^{I, N}(\Rd x,\Rd a) 
\end{equation}
Thanks to Assumption \ref{hyp-initial}, the above conditional expectation
converges in probability to
\begin{equation}\label{eq_def_bar_muR1}
\langle\bar\mu^{R,0}_t, \varphi\rangle
=\int_0^\infty\hcm{-0.3} \int_{\bX} \varphi(x)  \bigg(1-\frac{F^c_x(a+t)}{F^c_x(a)} \bigg)
\bar\mu_0^{I}(\Rd x,\Rd a)\,. 
\end{equation}
By the independence of the $\eta_j^{N, 0}$ for $j\in \cI^N(0)$ conditionally on $\cF^N_0$, we obtain 
\begin{equation}
\begin{split}
	 \Var_0^N\Big[\langle\bar\mu^{R,N,0}_t, \varphi\rangle\Big]
&= \frac1N \int_0^\infty\hcm{-0.3} \int_{\bX} \varphi(x)^2\cdot  \frac{F^c_x(a+t)}{F^c_x(a)}\cdot \bigg(1-\frac{F^c_x(a+t)}{F^c_x(a)} \bigg)
\bar\mu_0^{I, N}(\Rd x,\Rd a)
\\&\quad \le \frac{\Ninf{\varphi}^2}N\,.
\end{split}
\end{equation}
Finally we conclude the convergence in probability of $\bar\mu^{R,N,0}_t$ to $\bar\mu^{R,0}_t$
as defined in \eqref{eq_def_bar_muR1}.
\bigskip

We then look at the mean-field approximation of $\bar\mu^{R,N,1}$ as defined in \eqref{eq_def_barmuRNt1}:
\begin{equation}	\label{eq_def_td_muRNt1}
	\langle\wtd\mu^{R,N,1}_t, \varphi\rangle = \frac{1}{N}
	\sum_{i\in \wtd \sD^N(t)} \idc{\wtd\tau^N_i +\eta^N_i\le t} \varphi(X^N_i)\,. 
\end{equation}
We recall that $\wtd \sD^N(t)$
 is defined in \eqref{eq_def_wDN} as the subset
of individuals infected according to $(\wtd \tau_i^N)$
during the time-interval $(0,t]$.
\begin{equation}\label{eq_eq_exp_td_muRN}
	\bE_0^N\Big[\langle\wtd\mu^{R,N,1}_t, \varphi\rangle
\Big]
	=  \int_{\bX} \varphi(x) \int_0^t  
\overline \mfF(s, x) \cdot \exp\bigg[-\int_0^s \overline \mfF(r, x) \Rd r\bigg]
	\cdot F_x(t-s)
	\Rd s\,
	\bar\mu_0^{S, N}(\Rd x)\;.
\end{equation}
Thanks to Assumption \ref{hyp-initial} and \eqref{eq_def_bmuS2}, the above conditional expectation
converges in probability to:
\begin{equation}\label{eq_def_bar_muR0}
\begin{split}
		\langle\bar\mu^{R,1}_t, \varphi\rangle
	&= \int_0^t\hcm{-0.2} \int_{\bX} \varphi(x)\cdot F_x(t-s)\cdot 
	\overline \mfF(s, x) \cdot \exp\bigg[-\int_0^s \overline \mfF(r, x) \Rd r\bigg]
	\bar\mu_0^{S}(\Rd x)\,\Rd s
	\\&= \int_0^t\hcm{-0.2} \int_{\bX} \varphi(x)\cdot F_x(t-s)\cdot 
	\overline \mfF(s, x)
	\bar\mu_s^{S}(\Rd x)\,\Rd s\,.
\end{split}
\end{equation}
By the independence of the $\eta^N_i$ and of the $\wtd\tau_i^N$ for $i\in \cS^N(0)$ conditionally on $\cF^N_0$, we obtain 
\begin{equation}\label{eq_var_td_muRN}
	\begin{split}
		\Var_0^N\Big[\langle\wtd\mu^{R,N,1}_t, \varphi\rangle\Big]
		&\le \frac{\Ninf{\varphi}^2}N\,.
	\end{split}
\end{equation}
Recalling \eqref{eq_def_mfD},
since the two events $\{\wtd \tau_i^N + \eta^N_i \le t\}$
and $\{\tau_i^N + \eta^N_i \le t\}$ agree 
on the event $\{D^N_i(r) = \wtd D^N_i(r), \forall r\in [0, t]\}$:
\begin{equation}\label{eq_diff_tdbar_muRN0}
|\langle\wtd\mu^{R,N,1}_t - \bar \mu^{R,N,1}_t, \varphi\rangle|
\le \Ninf{\varphi}\cdot \mfD(t).
\end{equation}
The right-hand side converges in probability to 0 as $N\to \infty$  thanks to Proposition~\ref{prop_cvg_DN}.
With \eqref{eq_eq_exp_td_muRN},
\eqref{eq_def_bar_muR0}, \eqref{eq_var_td_muRN}
and \eqref{eq_diff_tdbar_muRN0} we deduce the convergence in probability of $\langle\wtd\mu^{R,N,1}_t, \varphi\rangle$ to $\langle\wtd\mu^{R,1}_t, \varphi\rangle$
as defined in \eqref{eq_def_td_muRNt1}.
By recalling \eqref{eq_dec_muRN},
the convergence of $\bar\mu^{R,N,0}_t$ and \eqref{eq_def_bmuR},
we conclude the convergence in probability of  $\langle\bar\mu^{R,N}_t, \varphi\rangle$ to $\langle\bar\mu^{R}_t, \varphi\rangle$.

For non-negative $\varphi$, the function	$:t\mapsto \langle\bar\mu^{R,N}_t, \varphi\rangle$
is non-decreasing. We can thus easily adapt the argument  given in Proposition  \ref{prop_cvg_muSN},
which notably involves the tightness criteria given in Proposition~\ref{prop_perkins}
with $\nu^N = \bar \mu^{R, N}$,
still $\cY = \bX$ and $\mfM = \cC_{b+}(\bX)$
so as to apply  the Second Dini theorem.
For any $t$ and any compact set $K$, $\bar\mu^{R,N}_t(K^c) \le \bar \mu^N_X(K^c)$
as defined in \eqref{eq_def_muXN}. Since the latter converges in probability to $\bar \mu_X$,
recalling \eqref{eq_def_muXN},
we can find for any $\eps>0$ some compact set $K_\eps$ such that 
\begin{equation*}
\sup_{N\ge 1} \PR\Big(\bar \mu^N_X(K_\eps^c)> \eps\Big)< \eps.
\end{equation*}
Point $(a)$  in Proposition~\ref{prop_perkins} is therefore verified as well.
So we deduce that the convergence in probability
extends to the function $\bar\mu^{R,N}$ in $\bD_1$,
which concludes the proof of Proposition~\ref{prop_cvg_muRN}.
\end{proof}

\begin{prop}\label{prop_cvg_muIN}
	As $N\to\infty$, the following convergence holds  in probability 
	\[
	\bar \mu^{I,N}\to \bar\mu^I \quad\mbox{in}\quad \bD(\RR_+, \cM(\bX\times \RR_+))\,.
	\]
\end{prop}
\begin{proof}
Although the proof is more technical
	than  the one of Proposition \ref{prop_cvg_muRN},
	we exploit similar arguments. 
In order to exploit the Second Dini theorem, 
we wish to consider non-increasing projections.
This is why we will study  the following extended measure $\bar \mu^{SI, N}_t$
 for test functions $\psi$ 
 in the set $\mathfrak M(\bX\times [-1, \infty))$ of  functions from $\bX\times [-1, \infty)$ to $\RR_+$
 that are continuous, bounded, non-negative and non-increasing in the second variable:
\begin{align*}
\langle	\bar \mu^{SI, N}_t, \psi \rangle
&:= \langle	\bar \mu^{S, N}_t, \psi(\,.\,, -1) \rangle
+ \langle	\bar \mu^{I, N}_t, \psi_{|\bX\times [0, \infty)} \rangle
\\&= \frac1N \sum_{i\in \cS^{N}(0)} \bigg(\idc{D_i^N(t) = 0} \psi(X_i^N, -1)
+ \idc{D_i^N(t) = 1} \,\idc{\eta^N_i > t- \tau_i^N }\psi(X_i^N, t- \tau_i^N) \bigg)
\\
&\quad+ \frac1N  \sum_{j\in \cI^{N}(0)}\,\idc{\eta_i^{N, 0} > t} \psi(X_j^N, A_j^N(0)+t)\,.
\end{align*}
In words, $\bar \mu^{SI, N}$ is derived from the addition of both $\bar \mu^{S, N}$ and $\bar \mu^{I, N}$ where the first measure on $\bX$ is projected with a fixed component $-1$ according to the age variable. 
Intuitively, what we are doing is considering susceptible individuals as infected 
with infection age $-1$.
The fact that $\psi$ is non-negative implies that any recovery event leads to a reduction
of $\langle	\bar \mu^{SI, N}, \psi \rangle$ at this particular time.
The fact that $\psi$ is non-increasing in the second variable implies that the aging of the actively infected  leads as well to a reduction of $\langle	\bar \mu^{SI, N}, \psi \rangle$
over time.
With this trick of combining  $\bar \mu^{S, N}$ to  $\bar \mu^{I, N}$ into $\bar \mu^{SI, N}$, 
any infection event leads as well to a  reduction of $\langle	\bar \mu^{SI, N}, \psi \rangle$ at the infection time.

We will make use of Proposition~\ref{prop_perkins}
in combination with 
the following lemma
by considering for $\nu^N = \bar \mu^{SI, N}$
the set $\cY = \bX \times [-1, \infty)$
and the proposed set $\mfM(\bX\times [-1, \infty))$ as  the separating class.
\begin{lem}\label{lem_sep_mfM}
The set $\mfM(\bX\times [-1, \infty))$ 
of functions that are continuous, bounded, non-negative and non-increasing in the second variable
 is a separating class.
\end{lem}
\begin{proof}
The fact that the constant function equal to 1 is part of $\mfM(\bX\times [-1, \infty))$ comes readily from the definition.
If $\nu, \nu'$ are such that $\langle \nu, \phi\rangle = \langle \nu', \phi\rangle$ for any $\phi \in \mfM$,
then the classical approximation scheme of indicator functions by bounded continuous functions
leads to the identity $\nu(A\times [-1, a]) = \nu'(A\times [-1, a])$,
for any $a\in [-1, \infty)$ and measurable subset $A$ of $\bX$.
The sets of this form $A\times [-1, a]$ form a $\pi$-system 
of subsets of the product space $\bX\times [-1, \infty)$
that contains $\bX\times [-1, \infty)$ itself and
generates the Borel $\sigma$-field of $\bX\times [-1, \infty)$.
The identity $\nu = \nu'$ is thus deduced thanks e.g. to \cite[Lemma 1.17]{Ka02}.
This concludes the proof of Lemma~\ref{lem_sep_mfM}.
\end{proof}
Then, it mainly remains to adapt the computations of conditional expectations and variances
from the proof of Proposition \ref{prop_cvg_muRN}.
$\bar \mu^{I, N}$ is similarly decomposed into the sum of $\bar \mu^{I, N, 0}$ and $\bar \mu^{I, N, 1}$, 
that are represented as follows for any $\psi \in C_b(\bX\times \RR_+)$ and $t\ge 0$:
\begin{equation}	\label{eq_def_bar_muINt0}
	\langle \bar\mu^{I,N,0}_t, \psi\rangle  = \frac{1}{N} \sum_{j\in \cI^N(0)}   \idc{\eta_j^{N, 0} >t}  \psi(X^N_j, A_j^N(0)+t)\,,
\end{equation}
and
\begin{equation}	\label{eq_def_barmuINt1}
	\langle\bar \mu^{I,N,1}_t, \psi\rangle =  \frac{1}{N}  \sum_{i\in \sD^N(t)} \idc{\tau^N_i +\eta^N_i>t} \psi(X^N_i, t-\tau^N_i)\,.
\end{equation}
Concerning $\bar\mu^{I,N,0}$, we have 
\begin{equation*}
	\bE_0^N\Big[\langle\bar\mu^{I,N,0}_t, \psi\rangle\Big]
	= \int_0^\infty\hcm{-0.3} \int_{\bX} \psi(x, a + t)  \cdot \frac{F^c_x(a+t)}{F^c_x(a)} 
	\bar\mu_0^{I, N}(\Rd x,\Rd a) 
\end{equation*}
Thanks to Assumption \ref{hyp-initial}, the above conditional expectation
converges in probability to:
\begin{equation}\label{eq_def_bar_muI1}
	\langle\bar\mu^{I,0}_t, \psi\rangle
	=\int_0^\infty\hcm{-0.3} \int_{\bX} \psi(x, a+t)  \cdot\frac{F^c_x(a+t)}{F^c_x(a)}\,
	\bar\mu_0^{I}(\Rd x,\Rd a)\,. 
\end{equation}
By the independence of the $\eta_j^{N, 0}$ for $j\in \cI^N(0)$ conditionally on $\cF^N_0$, we obtain 
\begin{equation*}
	\begin{split}
		\Var_0^N\Big[\langle\bar\mu^{I,N,0}_t, \psi\rangle\Big]
		&= \frac1N \int_0^\infty\hcm{-0.3} \int_{\bX} \psi(x, a+t)^2\cdot  \frac{F^c_x(a+t)}{F^c_x(a)}\cdot \bigg(1-\frac{F^c_x(a+t)}{F^c_x(a)} \bigg)
		\bar\mu_0^{I, N}(\Rd x,\Rd a)
		\\&\quad \le \frac{\Ninf{\psi}^2}N\,.
	\end{split}
\end{equation*}
So we conclude to the convergence in probability of $\langle\bar\mu^{I,N,0}_t, \psi\rangle$ to $\langle\bar\mu^{I,N,0}_t, \psi\rangle$
as defined in \eqref{eq_def_bar_muI1}, 
valid for any $t>0$ and any $\psi\in \cC_b(\bX\times \RR_+)$.
\medskip

We then consider the mean-field approximation of $\bar\mu^{I,N,1}$ as defined in \eqref{eq_def_barmuINt1}, exploiting the notations $\wtd \tau_i^N$ and $\wtd\sD^N(t)$ from \eqref{eq_def_wDN}:
\begin{equation}	\label{eq_def_td_muINt1}
	\langle\wtd\mu^{I,N,1}_t, \psi\rangle = \frac{1}{N}
	\sum_{i\in \wtd\sD^N(t)} \idc{\wtd\tau^N_i +\eta^N_i> t} \psi(X^N_i, t - \wtd\tau^N_i)\,. 
\end{equation}
\begin{equation}\label{eq_eq_exp_td_muIN}
	\bE_0^N\Big[\langle\wtd\mu^{I,N,1}_t, \psi\rangle\Big]
	=  \int_{\bX} \int_0^t  
 	\overline \mfF(s, x) \cdot \exp\bigg[-\int_0^s \overline \mfF(r, x) \Rd r\bigg]
	\cdot F_x(t-s)\cdot \psi(x, t-s)
	\Rd s\,
	\bar\mu_0^{S, N}(\Rd x)\;.
\end{equation}
Thanks to Assumption \ref{hyp-initial} and \eqref{eq_def_bmuS2}, the above conditional expectation
converges in probability to
\begin{equation}\label{eq_def_bar_muI0}
	\begin{split}
		\langle\bar\mu^{I,1}_t, \psi\rangle
		&=\int_{\bX} \hcm{-0.1}  \int_0^t\psi(x, t-s)\cdot F_x(t-s)\cdot 
		\overline \mfF(s, x) \cdot \exp\bigg[-\int_0^s \overline \mfF(r, x) \Rd r\bigg]
	\Rd s\,	\bar\mu_0^{S}(\Rd x)
		\\&= \int_0^t\hcm{-0.2} \int_{\bX} \psi(x, t-s)\cdot F_x(t-s)\cdot 
		\overline \mfF(s, x)
		\bar\mu_s^{S}(\Rd x)\,\Rd s\,.
	\end{split}
\end{equation}
By the independence of the $\eta^N_i$ and of the $\wtd\tau_i^N$ for $i\in \cS^N(0)$ conditionally on $\cF^N_0$, we obtain
\begin{equation}\label{eq_var_td_muIN}
	\begin{split}
		\Var_0^N\Big[\langle\wtd\mu^{I,N,1}_t, \psi\rangle\Big]
		&\le \frac{\Ninf{\psi}^2}N\,.
	\end{split}
\end{equation}
Recalling \eqref{eq_def_mfD},
since the two events $\{\wtd \tau_i^N + \eta^N_i > t\}$
and $\{\tau_i^N + \eta^N_i > t\}$ agree 
on the event $\{D^N_i(r) = \wtd D^N_i(r), \forall t\in [0, t]\}$, we get
\begin{equation}\label{eq_diff_tdbar_muIN0}
	|\langle\wtd\mu^{I,N,1}_t - \bar \mu^{I,N,1}_t, \psi\rangle|
	\le \Ninf{\psi}\cdot \mfD(t).
\end{equation}
The right-hand side converges in probability to 0 as $N$ tends to infinity thanks to Proposition~\ref{prop_cvg_DN}.
With \eqref{eq_eq_exp_td_muIN},
\eqref{eq_def_bar_muI0}, \eqref{eq_var_td_muIN}
and \eqref{eq_diff_tdbar_muIN0} we deduce the convergence in probability of $\langle\wtd\mu^{I,N,1}_t, \psi\rangle$ to $\langle\wtd\mu^{I,1}_t, \psi\rangle$
as defined in \eqref{eq_def_bar_muI0}.

This concludes the proof that $\langle\bar\mu^{I,N}_t, \psi\rangle$
converges in probability to $\langle\bar\mu^{I}_t, \psi\rangle$,
for any $t>0$ and any $\psi \in \cC_b(\bX\times \RR_+)$.
Recalling Proposition~\ref{prop_cvg_muSN},
we deduce specifically that $\langle\bar\mu^{SI,N}_t, \psi\rangle$
converges in probability to $\langle\bar\mu^{SI}_t, \psi\rangle$,
for any $t>0$ and any $\psi \in \mfM$,
where 
\begin{equation}
\langle\bar\mu^{SI}_t, \psi\rangle
:= \langle	\bar \mu^{S}_t, \psi(\,.\,, -1) \rangle
+ \langle	\bar \mu^{I}_t, \psi_{|\bX\times [0, \infty)} \rangle\,.
\end{equation}
Note about the above definition of $\bar\mu^{SI}_t$ through test functions $\psi \in \mfM$
that it already uniquely specifies $\wtd\mu^{SI}_t$ due to $\mfM$ being a separating class.
The extension of this definition to any $\psi \in \cC_b(\bX \times [-1, \infty))$
is yet very natural.

With the crucial arguments given at the beginning of this proof of Proposition \ref{prop_cvg_muIN},
recall that $\langle\bar\mu^{SI,N}_t, \psi\rangle$ is for any $\psi\in \mfM$ non-increasing as a function of $t$.
On the other hand, $\langle\bar\mu^{SI}_t, \psi\rangle$ is deterministic, continuous and non-increasing as a function of $t$.
We can thus adapt the argument  given in Proposition~\ref{prop_cvg_muSN}
to show that
the convergence in probability of $\langle\bar\mu^{SI,N}_t, \psi\rangle$
to $\langle\bar\mu^{SI}_t, \psi\rangle$
is locally uniform in $t$.
This concludes Point $(b)$ in Proposition~\ref{prop_perkins}.
Concerning Point $(a)$,
we remark for any compact set $K$ in $\bX$ and any $A>0$ that 
\begin{equation}\label{eq_prop_mu_tight}
	\bar\mu^{SI,N}_t\big[(K\times [-1, A+t])^c\big]
\le \bar\mu^{S,N}_0\big[K^c\big]
+ \bar\mu^{I,N}_0\big[(K\times [0, A])^c\big].
\end{equation}
Since $\bar\mu^{S,N}_0$ and  $\bar\mu^{I,N}_0$ converge in probability to respectively $\bar\mu^{S}_0$ and  $\bar\mu^{I}_0$,
there exists for any $\eps>0$ such a compact set $K$ in $\bX$ and $A>0$ that satisfy 
\begin{equation*}
\sup_N \PR(\bar\mu^{S,N}_0\big[K^c\big]
+ \bar\mu^{I,N}_0\big[(K\times [0, A])^c\big] > \eps)
< \eps.
\end{equation*}
Recalling \eqref{eq_prop_mu_tight},
this entails Point $(a)$,
 for any $T>0$ with $K^{(\cY)} = K\times [-1, A + T]$.
We then exploit Proposition~\ref{prop_perkins}
and conclude the proof of Proposition~\ref{prop_cvg_muIN}
that $\bar \mu^{(I, N)}$,
as the restriction of $\bar \mu^{(SI, N)}$ to $\bX \times \RR_+$,
 converges in probability to $\bar \mu^{(I, N)}$
 in $\bD(\RR_+, \cM(\bX\times \RR_+))$.
 \end{proof}

\section{Concluding remarks}\label{sec_disc_MT}

Typical examples of pairwise kernel interactions found in applications
are contact matrices over a discrete space as in \cite{BBT2020}, e.g., as well as spatial interactions as in  \cite{KP2024spatial}
with characteristics that are distributed on a compact subset of $\RR^d$.
Our conditions encompass both settings,
and any combination of those.

They also allow for a scaling factor $\eps^N>0$ 
depending on $N$
such that $\bar \kappa^N = \eps^N \cdot \bar \kappa$
where $\bar \kappa$ can be a constant but also a non-degenerate bounded measurable function.
A typical choice is for instance $\eps^N = N^{-\alpha}$ with $\alpha \in (0, 1)$.
Provided we assume as in \cite{KHT2022}
that the contact rate  is fixed  at some value $\bar \gamma^N>0$, 
$N\eps^N \to \infty$ is required (and sufficient)
for Assumption~\ref{hyp-Var} to hold. 
We do not need to require the convergence to infinity of $N\eps^N/\log(N)$ as in \cite{KHT2022}.
We remark that $N\eps^N\to \infty$ ensures that the node degrees diverge, with no condition on their relation to $N$.
We refer to \cite{DFG+24}
for more details on the relation to the denseness level of the graph
(in terms notably of the number of edges or the node degrees).
The case where $\eps^N = N^{-1}$
leads to limiting equations of a different nature,
as hinted by the exploratory simulations in 
\cite{DFG+24}
and clarified in the results of \cite{BHI2024} for 
specific cases of stochastic block models. 

Besides, our conditions guarantee the robustness of the convergence against perturbations that concern a negligible subset of edges,
since the two statistics 
introduced in Assumption~\ref{hyp-Var}
are averages.

Moreover, 
it can be helpful to introduce
different kinds of interactions
 with their corresponding scaling factors.
For example, 
we may consider the $(X_i^N)$ as spatial coordinates, 
say uniformly distributed in $[0, 1]$ 
and with the norm-distance $d(x, y) = |x-y|$  on the circle
so that all the locations are equivalent
(and where $0$ is merged with $1$).
Let us then distinguish in the expressions of $\kappa^N$ and $\gamma^N$
a local interaction pattern (specified by the index $\cL$) with radius $\delta \in (0, 1/2)$
and a global interaction pattern (with index $\cG$) over the whole domain: 
\begin{equation*}
	\kappa^N(x, y) = 
	\begin{cases}
		\kappa^{N, \cL}
		\\
		\kappa^{N, \cG}
	\end{cases},
	\quad \gamma^N(x, y) = 
	\begin{cases}
		\gamma^{N, \cL}
		\quad \text{ if } |x - y| \le \delta,
		\\
		\gamma^{N, \cG}
		\quad \text{ otherwise.}
	\end{cases}
\end{equation*}
Typically, we could expect $\kappa^{N, \cL} \gg \kappa^{N, \cG}$ 
while $\gamma^{N, \cG} \gg \gamma^{N, \cL}$,
that is,  many small local contacts 
as compared to rare yet strongly connected global contacts.
Such a framework on a network 
is captured by our model.

Both types of contacts remain in the limit
provided both $\kappa^{N, \cL} \cdot \gamma^{N, \cL}$
and $\kappa^{N, \cG} \cdot \gamma^{N, \cG}$
scale as $N^{-1}$.
Actually we see that the contribution of the local contacts 
then outcompetes the one of global contacts
under the following condition:
$(\kappa^{N, \cL} \cdot \gamma^{N, \cL})/(\kappa^{N, \cG} \cdot \gamma^{N, \cG})
\ge (2\delta)/(1-2\delta)$.

\medskip

A possible extension of our result would be to consider individual types 
that  evolve in time, 
typically seasonal or when the individual locations move in space.
Another natural objective would be to establish the FLLN in instances where the limiting kernel $\bar \omega$ incorporates an additional dependency on the overall distribution $\bar \mu_X$ as in the general setting of \cite{KP2024spatial}.
The inclusion of superspreading events,
defined as the occurrence of highly heterogeneous transmission,
would require to relax
the boundedness conditions
(see \cite{DDZ22a} in this direction regarding $\bar\omega$
or \cite{FP26} regarding $\lambda_i^N$), 
which also leads to additional difficulties.

\appendix
\section{Technical supporting results} \label{sec:appendix}

In Appendix~\ref{sec:appendix},
we prove three technical results
that are exploited in the current paper.
Lemma~\ref{lem_loc_unif_cvg} is used to deduce local uniform convergence in probability
from pointwise estimates.
Lemma~\ref{lem_pair_ctn} 
is used to deduce the a.e. continuity 
of sections of a.e. continuous kernels
and a related result is stated in the next Lemma~\ref{lem_ae_cont}. 
The more technical proof of 
Proposition~\ref{prop_cvg_mu0}
is given afterwards.

\subsection{From pointwise to locally uniform convergence in probability}\hfill\\
In Lemma~\ref{lem_loc_unif_cvg},
we state that the second Dini theorem extends 
to convergences in probability 
of random functions.
\begin{lem}\label{lem_loc_unif_cvg}
Let $\psi$
be a possibly random non-decreasing and continuous function
from $\RR_+$ to $\RR$.
Let also $\psi^n$
be a possibly random sequence of non-decreasing functions
from $\RR_+$ to $\RR$
that converges pointwise in probability to $\psi$.
Then, $\psi^n$ converges in probability to $\psi$ locally uniformly.
\end{lem}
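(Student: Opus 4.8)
The plan is to mimic the classical proof of Dini's second theorem, but to run it along almost-surely convergent subsequences, exploiting the characterization of convergence in probability via subsequences (as already used elsewhere in the paper, e.g. \cite[Lemma 4.2]{Ka02}). Fix a horizon $T>0$ and $\eps>0$; it suffices to show that $\PR(\sup_{t\in[0,T]}|\psi^n(t)-\psi(t)|>\eps)\to 0$. By the subsequence criterion, it is enough to show that every subsequence of $(\psi^n)$ has a further subsequence along which $\sup_{t\in[0,T]}|\psi^n(t)-\psi(t)|\to 0$ almost surely.

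So take a subsequence. Using a diagonal extraction over a fixed countable dense set $\{t_i\}_{i\ge 1}\subset[0,T]$ (including the endpoints $0$ and $T$), together with pointwise convergence in probability at each $t_i$ and \cite[Lemma 4.2]{Ka02}, pass to a further subsequence along which $\psi^n(t_i)\to\psi(t_i)$ almost surely simultaneously for all $i$, and also $\psi(\cdot)$ is (a.s.) continuous by hypothesis. On this almost sure event I would run the usual monotonicity argument: given $\delta>0$, use uniform continuity of the continuous non-decreasing function $\psi$ on the compact interval $[0,T]$ to pick a finite partition $0=s_0<s_1<\cdots<s_m=T$ drawn from the dense set $\{t_i\}$ with $\psi(s_{k+1})-\psi(s_k)<\delta$ for all $k$. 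For $t\in[s_k,s_{k+1}]$, monotonicity of both $\psi^n$ and $\psi$ gives
\[
\psi^n(t)-\psi(t)\le \psi^n(s_{k+1})-\psi(s_k)=\big(\psi^n(s_{k+1})-\psi(s_{k+1})\big)+\big(\psi(s_{k+1})-\psi(s_k)\big),
\]
and symmetrically $\psi(t)-\psi^n(t)\le \big(\psi(s_k)-\psi^n(s_k)\big)+\delta$. Hence $\sup_{t\in[0,T]}|\psi^n(t)-\psi(t)|\le \max_{0\le k\le m}|\psi^n(s_k)-\psi(s_k)|+\delta$, and letting $n\to\infty$ along the chosen subsequence the first term vanishes a.s.; since $\delta>0$ was arbitrary, $\sup_{t\in[0,T]}|\psi^n(t)-\psi(t)|\to 0$ a.s. along that subsequence. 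This establishes the required subsequence-of-subsequence property, hence convergence in probability locally uniformly on $[0,T]$, and since $T$ was arbitrary, locally uniformly on $\bR_+$.

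The only mild subtlety — and the one place requiring a little care rather than being fully routine — is the bookkeeping of the null sets: the continuity of $\psi$, and the countably many a.s. convergences $\psi^n(t_i)\to\psi(t_i)$, must all be arranged to hold on a single almost sure event before the deterministic Dini argument is applied pathwise, and the partition $\{s_k\}$ must be chosen measurably (or simply drawn from the fixed countable set $\{t_i\}$, which sidesteps measurability entirely since only countably many candidate partitions arise). There is no real analytic obstacle; the content is entirely the transfer of the deterministic Dini argument through the subsequence principle.
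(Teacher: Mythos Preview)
Your proposal is correct and follows essentially the same approach as the paper: pass to almost-sure convergence along a diagonally extracted subsequence at a countable dense set of times via \cite[Lemma 4.2]{Ka02}, apply the second Dini theorem pathwise, and conclude by the subsequence criterion. The only difference is that you spell out the monotonicity sandwich argument explicitly (and discuss the null-set bookkeeping), whereas the paper simply invokes the second Dini theorem by reference.
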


\begin{proof}
We exploit the relation between the convergence in probability 
and the a.s. convergence along  sequence extractions
as stated in  \cite[Lemma 4.2]{Ka02}.

Let $(N[k])_{k\ge 1}\in \mathbb{N}^\mathbb{N}$ be an increasing sequence
and for any $T$, let $(t_i)_{i\ge 1}$ be a countable dense subset of $[0, T]$.
By a triangular argument, 
we can then define an extraction $(\wtd N_n)_{n\ge 1} = (N[k_n])_{n\ge 1}\in \mathbb{N}^\mathbb{N}$,
with the sequence $(k_n)_{n\ge 1}$ being increasing,
such that for any $i\ge 1$, 
$\psi^{\wtd N_n}(t_i)$
converges a.s. to $\psi(t_i)$
as $n$ tends to infinity.
Thanks to the second Dini theorem (see Exercise 127 on page 81, and its solution on page 270 in 
Polya and Szeg{\"o} \cite{PS1978}),
this entails the a.s. convergence of $\psi^{\wtd N_n}(t)$ to $\psi(t)$
uniformly in $t\in [0, T]$.
Since the sequence $(\wtd N_n)$ is an extraction of any initial subsequence and $T$ can be freely chosen, 
this concludes thanks to \cite[Lemma 4.2]{Ka02}
that $\psi^{\wtd N_n}(t)$
converges in probability to $\psi(t)$
locally uniformly in $t$.
\end{proof}

\subsection{Almost everywhere continuity}

\begin{lem}\label{lem_pair_ctn}
Let $\cY$ be a Polish space.
Let $\mu \in \cM(\bX)$, $\nu\in \cM(\cY)$
and  $k:\bX\times \cY \mapsto \RR$ 
be a bounded measurable function
that is $(\mu \otimes \nu)$-a.e.  continuous.
Then the function $:x\mapsto \bar k(x, .)$ is
$\mu$ a.e. continuous 
from $\bX$ with values in  $L^1(\nu)$.
\end{lem}
Lemma~\ref{lem_pair_ctn}
is firstly exploited in the proof of Lemma~\ref{lem_reg_bF}
with $\cY = \bX$, $\mu = \nu = \bar \mu_X$ and $k = \bar \omega$. It is also involved in the proof of the following Lemma~\ref{lem_ae_cont}.

\begin{proof}
For any $\delta, \eta>0$, $x\in \bX$ and $x'\in B(x, \eta)$:
\begin{equation}\label{eq_prop_L1cont}
	\int_\cY |k(x, y) - k(x', y)|\,\nu(\Rd y)
	\le \delta  +  \Ninf{k}\, \nu\big(\cG_x[\delta, \eta]),
\end{equation}
where $\cG_x[\delta, \eta] = \{y\in \cY,\, \exists x''\in B(x, \eta),\, |k(x, y) - k(x'', y)|> \delta\}$.	
	
By assumption, there exists a measurable subset $N$ of $\bX\times \cY$
such that $\mu\otimes\nu(N)=0$
and such that $k$ is continuous in any $(x, y)\in \bX^2\setminus N$.
By  the Fubini-Tonelli theorem, for any $x\in \bX$,
the section $N_x$ is measurable, where $N_x := \{y\in \bX, (x, y)\in N\}$.
In addition, 
$\int_\bX \nu(N_x)\, \mu(\Rd x) 
=\mu\otimes\nu(N) = 0$,
so that $\nu(N_x) = 0$ for $\mu$ almost every $x$.
For any $y\notin N_x$, $k$ is continuous in $(x, y)$, thus $k(x, .)$ is continuous in $y$,
which entails $\cap_{n\ge 1} \cG_x[\delta, 2^{-n}]\in N_x$.
For any $x$ such that  $\nu(N_x) = 0$,
$\nu\big(\cG_x[\delta, \eta])$ tends to zero as $\eta$ tends to zero. 
Recalling \eqref{eq_prop_L1cont},
it concludes the proof of Lemma~\ref{lem_pair_ctn}.
\end{proof}
	
\begin{lem}\label{lem_ae_cont}
Let $t>0$, $\cY$ be a Polish space
and $\mu \in \cM(\cY)$.
Let the  measurable function $k:\cY\times [0, t]\to \RR$ be bounded and $\mu\otimes Leb$-a.e. continuous
and the measurable function $F:\cY\times [0, t]\to \RR$ be bounded and such that 
$:y\mapsto F(y, .)$ is $\mu$-a.e. continuous with
respect to the uniform norm in $[0, t]$.
Then, $:y\mapsto \int_0^t k(y, a)\, F(y, a)\, \Rd a$
is $\mu$-a.e. continuous.
\end{lem}
$Leb$ denotes the Lebesgue measure on $[0, t]$.
This lemma is to be applied in the proof of Lemma~\ref{lem_EN_bound}
with $\cY = \bX\times \bX$,
$\mu(\Rd y) = \bar \mu_X\otimes \bar \mu_X(\Rd x, \Rd x')$,
$k(y, a) = \bar \omega(x, x')\,\bar \lambda(x', t-a)$
and $F(y, a) = \overline{\cF}(x', a)\cdot \exp[-\int_0^a\overline{\cF}(x', r)\,\Rd r]$.

It is standard  that $k$ is bounded and $\mu\otimes Leb$-a.e. continuous under Assumption~\ref{hyp-Blambda}, \ref{hyp_ws} and \ref{hyp-w}.
Thanks to \eqref{eq_diff2_mfF}, $F$ is bounded and for any $y_1 = (x_1, x'_1)\in \cY$ and $y_2 = (x_2, x'_2)\in \cY$:
\begin{equation*}
\|F(y_1, .) - F(y_2, .)\|_{L^\infty([0, t])}
	\le (1 + \lambda^*\,\omega^*\,t)\, \|\overline{\cF}(x'_1, .) - \overline{\cF}(x'_2, .)\|_{L^\infty([0, t])}\,,
\end{equation*} 
which entails the required regularity of $F$
thanks to Lemma~\ref{lem_reg_bF}.

\begin{proof}
For any $y_1, y_2\in \cY$:
\begin{equation*}
\begin{split}
&	|\int_0^t k(y_1, a)\, F(y_1, a)\, \Rd a
- \int_0^t k(y_2, a)\, F(y_2, a)\, \Rd a|
 \\&\quad \le \Ninf{F}\,\|k(y_1, .)-k(y_2, .)\|_{L^1(Leb)}
+ t\,\Ninf{k}\, \|F(y_1, .)-F(y_2, .)\|_{L^\infty([0, t])}\,.
\end{split}
\end{equation*}
It concludes the proof of Lemma~\ref{lem_ae_cont} by virtue of the assumed regularity of $k$ and $F$,
thanks to Lemma~\ref{lem_pair_ctn} for the first term in $k$.
\end{proof}

\subsection{Proof of Proposition  \ref{prop_cvg_mu0}}

We first consider $(\mu^N)$ and $(\nu^N)$
as deterministic sequences,
and  then extend the result to random sequences in the last fifth step of the proof.

\begin{proof}
	By linearity of the above quantity in the function $k$ and in the pair $(\nu^N, \nu)$, 
	we may assume without loss of generality that $k$ is non-negative
	and bounded by 1,
while $\nu^N(\cY)\le 1$ and $\nu(\cY)\le 1$.
	Let us define the integrand in $x$ as $\eps^N(x)$:
	\begin{equation}
		\eps^N(x)
		= \big|\int_\cY k(x, y) [\nu^N - \nu](dy)\big|\,. 
		\label{eq_def_epsN}
	\end{equation}

In the degenerate case where $\nu \equiv 0$,
the convergence of $\langle\mu^N, \eps^N\rangle$ to 0
can be directly  deduced with the upper-bound of $\eps^N$ by $\Ninf{k}\cdot \nu^N(\cY)$ which converges to 0.
In the degenerate case where $\mu \equiv 0$,
it suffices to take $3\Ninf{k}\cdot \nu(\cY)$ as the uniform upper-bound of $\eps^N$
for any $N$ sufficiently large,
as $\mu^N(\bX)$ then tends to zero.
In the following, we can thus assume that both $\nu(\cY)>0$ and $\mu(\bX)>0$.

	The  irregularities of $k$ 
	will be located through the following subset $\cG[\delta, \eta]$ of $\bX\times \cY$,
	defined for any $\delta, \eta>0$:
	\begin{equation}
		\cG[\delta, \eta]
		:= \{(x,y)\in \bX\times \cY;\; Diam_k(B[(x, y); \eta])> \delta\},
		\label{eq_def_cGde}
	\end{equation}
	where the diameter function $Diam_k$ corresponding to the kernel $k$ is defined as follows for any measurable subset $A$ of $\bX\times \cY$:
	\begin{equation}
		Diam_k(A)
		:= \sup\{|k(z) - k(z')|; z, z'\in A\},	
	\end{equation}
	while $B[(x, y); \eta]$ denotes the open ball centered in $(x, y)$ of radius $\eta$.
	The size $\eta$ of the vicinities in \eqref{eq_def_cGde}
	shall be considered sufficiently small to ensure that discrepancies of order $\delta$
	are exceptional. 
	The measurability of the set $\cG[\delta, \eta]$ can be more directly verified
through the following definition, 
which happens to be equivalent to the one given in \eqref{eq_def_cGde}:
\begin{equation*}
	\cG[\delta, \eta]
	= \cup_{\{q\in \mathbb Q_+, m\ge 1\}} k^{-1}([0, q])^\eta
	\cap k^{-1}([q + \delta + 2^{-m}, \Ninf{k}])^\eta,
\end{equation*}
where $A^\eta$ is defined as follows for any Borel subset $A$ of $\bX^2$ and any $\eta>0$:
\begin{equation}\label{eq_def_Aeta}
	A^\eta := \{(x, x')\in \bX^2;\, 
	B[(x, x'); \eta] \cap A \neq \emptyset\},
\end{equation} 	
so that $A^\eta$ 
denotes the $\eta$-vicinity of $A$.

		\subsubsection*{Step 1: Convergence of $\langle \mu, \eps^N  \rangle$ to zero}
	\hfill	\\
		Since $k$ is $(\mu \otimes \nu)$-a.e. continuous,
		in particular, $k(x, .)$ is $\nu$-a.e. continuous  
		for $x$ on a measurable set $\cA\subset \bX$ such that  $\mu(\cA)= 1$.
For any $x\in \cA$,
		thanks to the Portmanteau theorem,
		see e.g. \cite{JS03}, Subsection IV.3a on the "Weak Convergence of Probability Measures",
		$\eps^N(x)$ converges to 0.
Remark as compared to the classical version of Portmanteau theorem
that we allow $\nu^N$ and $\nu$ to be general non-negative finite measure
rather than probability measures,
given that the proof is not difficult to adapt for this setting.
	As a consequence of Lebesgue's dominated convergence theorem,
	recalling that $\eps^N$ is bounded (by 1 under our assumption),
		we deduce 
		\begin{equation}
			\lim_{N\to \infty} \langle \mu, \eps^N  \rangle = 0.
			\label{eq_cvg0_muEpsN}
		\end{equation}
		
		\subsubsection*{Step 2: Convergence of $[\mu \otimes \nu](\cG[\delta, \eta])$ 
		and $[\mu \otimes \nu^N](\cG[\delta, \eta])$ to zero}
		\hfill\\
		Remark that the points of discontinuity of the kernel $k$
		are identified as follows in terms of the sets $\cG[\delta, \eta]$:
		\begin{equation*}
			\cup_{n\ge 1}\cap_{m\ge 1} \cG[2^{-n}, 2^{-m}].
		\end{equation*}
		Note also that the sets $\cG[\delta, \eta]$ are increasing 
		as $\delta$ decreases
		and non-increasing as $\eta$ decreases.
		Therefore, 
		due to the fact that $k$ is $(\mu \otimes \nu)$-a.e. continuous,
		the following convergence to zero holds for any $\delta$:
		\begin{equation}
			\lim_{\eta\to 0}	[\mu \otimes \nu](\cG[\delta, \eta])
			= 0.
			\label{eq_prop_cGde}
		\end{equation}
		Secondly, we remark that the weak convergence of $\nu^N$ to $\nu$
		implies the weak convergence of $\mu \otimes\nu^N$ to $\mu \otimes\nu$.
		For any $N$ sufficiently large, thanks to the Portmanteau theorem:
		\begin{equation}
			[\mu \otimes\nu^N](\cG[\delta, \eta])
			\le [\mu \otimes\nu](\cG[\delta, \eta]^{\eta}) + \eta,
			\label{eq_prop_cvg_Gde}
		\end{equation}
		where we recall the notation $A^\eta$ from \eqref{eq_def_Aeta}.
		Since $B[(x, y); \eta] \subset B[(x', y'); 2\eta]$
		holds true for any $(x', y') \in B[(x, y); \eta]$,
		it is a straightforward consequence of definition 
		\eqref{eq_def_cGde}
		that $\cG[\delta, \eta]^{\eta} \subset \cG[\delta, 2\eta]$.
		Recalling \eqref{eq_prop_cGde}
		and coming back to \eqref{eq_prop_cvg_Gde},
		we have proved the following convergence to zero for any $\delta$:
		\begin{equation}
			\lim_{\eta \to 0} [\mu \otimes \nu^N](\cG[\delta, \eta]) = 0.
			\label{eq_munuN_cGde}
		\end{equation}
		
		\subsubsection*{Step 3: Relation between the level sets of $\eps^N$ to $\cG[\delta, \eta]$}\hfill\\	
		We consider for any value $\delta>0$ the corresponding level-set of $\eps^N$:
		\begin{equation}
			\cH^N[\delta]
			:=	\{x\in \bX;\, \eps^N(x)\ge \delta\}\,.
			\label{eq_def_Hde}
		\end{equation}
	Let $\theta := 2 + 2\nu(\cY)>0$.
		For any $\eta$ sufficiently small, 
		we will relate in the following lemma the intersection $\cH^N[\delta]^c\cap \cH^N[\theta\,\delta]^\eta$
		to conditions on the following subsets of $\cY$:
		\begin{equation}
			\cG_x[\delta, \eta]
			:=\{y\in \cY;\, 
			(x, y) \in \cG[\delta, \eta]\},
			\label{eq_def_cGdx}
		\end{equation}
		namely the restriction of $\cG[\delta, \eta]$,
		recall \eqref{eq_def_cGde},
		with $x$ as the first coordinate.
		
		\begin{lem}
			\label{lem_incl_Hn}
			The following inclusion holds for any $\delta, \eta>0$
			and $N\ge 1$:
			\begin{equation*}
				\cH^N[\delta]^c\cap \cH^N[\theta\,\delta]^\eta
				\subset \Big\{x\in \bX;\, 
				\nu^N(\cG_x[\delta, \eta])\ge \frac{\delta}{2}
				\Big\}\cup \Big\{x\in \bX;\, 
				\nu(\cG_x[\delta, \eta])\ge \frac{\delta}{2}
				\Big\}.
			\end{equation*}
		\end{lem}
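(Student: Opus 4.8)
The plan is to argue by contradiction, starting from a generic point $x$ of the left-hand set. First I would unwind the two membership conditions: $x\in\cH^N[\delta]^c$ means $\eps^N(x)<\delta$, while $x\in\cH^N[\theta\delta]^\eta$ means, by the definition of the $\eta$-vicinity in \eqref{eq_def_Aeta}, that there is a point $x'\in\cX$ with $d(x,x')<\eta$ and $\eps^N(x')\ge\theta\delta$. I would then suppose, towards a contradiction, that $\nu^N(\cG_x[\delta,\eta])<\delta/2$ \emph{and} $\nu(\cG_x[\delta,\eta])<\delta/2$, so that $[\nu^N+\nu]\big(\cG_x[\delta,\eta]\big)<\delta$, and bound $\eps^N(x')$ from above by a quantity incompatible with $\eps^N(x')\ge\theta\delta$.

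The geometric heart is the observation that $k(x',\cdot)$ and $k(x,\cdot)$ are within $\delta$ of one another off the bad set. Indeed, for every $y\in\cY\setminus\cG_x[\delta,\eta]$ one has $(x,y)\notin\cG[\delta,\eta]$, i.e. $Diam_k\big(B[(x,y);\eta]\big)\le\delta$; since $d\big((x,y),(x',y)\big)=d(x,x')<\eta$ (with the product metric on $\cX\times\cY$), both $(x,y)$ and $(x',y)$ lie in $B[(x,y);\eta]$, whence $|k(x',y)-k(x,y)|\le\delta$. With this in hand I would use the triangle inequality applied to \eqref{eq_def_epsN}, namely $\eps^N(x')\le\eps^N(x)+\big|\int_\cY(k(x',y)-k(x,y))(\nu^N-\nu)(\Rd y)\big|$, then split the last integral over $\cG_x[\delta,\eta]$ and its complement, estimating $|k(x',\cdot)-k(x,\cdot)|\le1$ on the former (as $0\le k\le1$ after the normalization) and $|k(x',\cdot)-k(x,\cdot)|\le\delta$ on the latter, and using $|\nu^N-\nu|\le\nu^N+\nu$, to arrive at
\[\eps^N(x')\ \le\ \eps^N(x)\ +\ [\nu^N+\nu]\big(\cG_x[\delta,\eta]\big)\ +\ \delta\big(\nu^N(\cY)+\nu(\cY)\big).\]

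Finally I would insert $\eps^N(x)<\delta$, the contradiction hypothesis $[\nu^N+\nu](\cG_x[\delta,\eta])<\delta$, and the normalizations $\nu^N(\cY)\le1$, $\nu(\cY)\le1$ fixed at the outset of the proof of Proposition~\ref{prop_cvg_mu0}; this produces $\eps^N(x')<2\delta+\delta\big(\nu^N(\cY)+\nu(\cY)\big)$, hence $\theta<2+\nu^N(\cY)+\nu(\cY)$, which is the inequality that, together with the choice $\theta=2+2\nu(\cY)$, delivers the contradiction (here one also uses that $\nu^N(\cY)\to\nu(\cY)$, since $\nu^N\to\nu$ narrowly and the constant function $1$ is bounded continuous). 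Consequently at least one of $\nu^N(\cG_x[\delta,\eta])$, $\nu(\cG_x[\delta,\eta])$ must be $\ge\delta/2$, i.e. $x$ belongs to the right-hand side of the claimed inclusion. I expect the delicate point to be precisely this last bookkeeping — matching the mass on the ``irregular'' section $\cG_x[\delta,\eta]$ against the exact value of $\theta$ and the normalization of the total masses — whereas the splitting estimate and the inclusion $(x,y),(x',y)\in B[(x,y);\eta]$ are routine.
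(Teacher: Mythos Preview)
Your approach is essentially the same as the paper's: pick $x'\in\cH^N[\theta\delta]\cap B(x,\eta)$, split the integral of $k(x',\cdot)-k(x,\cdot)$ against $\nu^N-\nu$ over $\cG_x[\delta,\eta]$ and its complement, bound the oscillation of $k$ by $1$ on the bad set and by $\delta$ off it, and compare against the gap $\eps^N(x')-\eps^N(x)>(\theta-1)\delta$. The geometric observation $(x,y),(x',y)\in B[(x,y);\eta]$ for $y\notin\cG_x[\delta,\eta]$ is exactly what the paper uses.

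The one genuine slip is in your closing bookkeeping. From $\theta<2+\nu^N(\cY)+\nu(\cY)$ and $\theta=2+2\nu(\cY)$ you obtain $\nu(\cY)<\nu^N(\cY)$, and you claim this together with $\nu^N(\cY)\to\nu(\cY)$ ``delivers the contradiction''. It does not: for any \emph{fixed} $N$ the strict inequality $\nu(\cY)<\nu^N(\cY)$ is perfectly compatible with convergence of the sequence. So your argument does not establish the inclusion for every $N\ge1$ as the lemma is stated. The paper's own proof has the same feature --- it explicitly writes ``for $N$ sufficiently large'' when bounding the mass on $\cG_x[\delta,\eta]^c$ by $2\delta\,\nu(\cY)$ --- so in effect both proofs deliver the inclusion only for large $N$. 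Since the lemma is invoked in Step~4 under precisely that restriction, this is harmless for Proposition~\ref{prop_cvg_mu0}; but you should flag that what you actually prove is the inclusion for $N$ large enough, not for all $N\ge1$.
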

		
		\begin{proof}
			Let us consider any $x\in \cH^N[\delta]^c\cap \cH^N[\theta\,\delta]^\eta$.
			We can thus choose some $x'\in \cH^N[\theta\,\delta] \cap B(x, \eta)$.
			By virtue of \eqref{eq_def_cGdx}, for $N$ sufficiently large, we obtain 
\begin{equation}
\begin{split}
		\Big|\int_{\cG_x[\delta, \eta]^c}
	|k(x, y) - k(x', y)|\, [\nu^N - \nu](dy)
	\Big|
	&\le [\nu^N \vee \nu](\cG_x[\delta, \eta]^c)\cdot \delta 
	\\&\le 2\delta \cdot \nu(\cY)\,,
\end{split}
	\label{eq_prop_cGc}
\end{equation}
			where we have exploited that $k$, $\nu^N$ and $\nu$ are non-negative
		and that $\nu^N(\cY)$ converges to $\nu(\cY)>0$.
			On the other hand, 
			\begin{equation}
				\Big|\int_{\cG_x[\delta, \eta]}
				|k(x, y) - k(x', y)|\, [\nu^N - \nu](dy)
				\Big|
				\le [\nu^N \vee \nu](\cG_x[\delta, \eta]),
				\label{eq_prop_cGp}
			\end{equation}
			where we recall the assumption that $k$ is non-negative and bounded by 1.
			Since $x\in \cH^N[\delta]^c$ while $x'\in \cH^N[\theta\,\delta]$:
			\begin{equation*}
				|\eps^N(x) - \eps^N(x')|\ge \eps^N(x') - \eps^N(x)
				\ge  \delta\cdot(1+2\nu(\cY)).
			\end{equation*}
			Recalling \eqref{eq_def_epsN} to combine this result 
			with \eqref{eq_prop_cGc} and \eqref{eq_prop_cGp},
			we deduce:
			\begin{equation*}
				[\nu \vee \nu^N](\cG_x[\delta, \eta]) \ge \delta.
				\label{eq_prop_cGx}
			\end{equation*}
This inequality implies either $\nu^N(\cG_x[\delta, \eta])\ge \delta/2$
or  $\nu(\cG_x[\delta, \eta])\ge \delta/2$.
This concludes the proof of Lemma~\ref{lem_incl_Hn}.
		\end{proof}
		
		\subsubsection*{Step 4: Proof of Proposition~\ref{prop_cvg_mu0}
		in the particular case where $(\mu^N)$, $(\nu^N)$
		are deterministic.}
		\hfill\\
		For any $\delta>0$ and $N\ge 1$ sufficiently large, since the mass of $\mu^N$
		converges to the one of $\mu$, we obtain 
		\begin{equation}
			\langle \mu^N, \eps^N\rangle 
			\le 2\theta\,\delta\cdot \mu(\bX)
			+ \mu^N(\cH^N[\theta\,\delta]),
			\label{eq_prop_muNeN}
		\end{equation}
		where we recall \eqref{eq_def_Hde}.
		
		We choose $\eta\in (0, \delta)$ sufficiently small
		thanks to Step 2,
		to ensure both that $[\mu \otimes \nu](\cG[\delta, \eta])$ 
		is smaller than $\delta^2/2$
		and similarly for $[\mu \otimes \nu^N](\cG[\delta, \eta])$
		for any $N$ sufficiently large.
		Thanks to the Markov inequality, we have 
		\begin{equation}
			\mu\Big(\Big\{x\in \bX;\, 
			\nu(\cG_x[\delta, \eta])\ge \frac{\delta}{2}
			\Big\}\Big)
			\le \frac{2 [\mu \otimes \nu](\cG[\delta, \eta])}{\delta}
			\le \delta.
			\label{eq_prop_nucGx}
		\end{equation}
		Similarly, 
		\begin{equation}
			\mu\Big(\Big\{x\in \bX;\, 
			\nu^N(\cG_x[\delta, \eta])\ge \frac{\delta}{2}
			\Big\}\Big)
			\le \frac{2 [\mu \otimes \nu^N](\cG[\delta, \eta])}{\delta}
			\le \delta.
			\label{eq_prop_nuNcGx}
		\end{equation}
		Since $\mu^N$ converges weakly to $\mu$,
		for any $N$ sufficiently large  we have 
		\begin{equation}
			\mu^N(\cH^N[\theta\,\delta])
			\le \mu(\cH^N[\theta\,\delta]^\eta) + \eta,
			\label{eq_prop_eta}
		\end{equation}
		where we adapt the definition of $\eta$ vicinity given in \eqref{eq_def_Aeta}
		to subsets of $\bX$.
		As we expect $\cH^N[\theta\,\delta]^\eta$ to be mostly comprised into $\cH^N[\delta]$,
		we make the following distinction
		\begin{equation}
			\mu(\cH^N[\theta\,\delta]^\eta)
			\le \mu(\cH^N[\delta]) + \mu(\cH^N[\delta]^c\cap \cH^N[\theta\,\delta]^\eta).
			\label{eq_prop_Hde}
		\end{equation}
		Thanks to the Markov inequality, we obtain 
		\begin{equation*}
			\mu(\cH^N[\delta]) \le \delta^{-1}\cdot \langle \mu, \eps^N\rangle,
		\end{equation*}
		which converges to 0 as $N$ tends to infinity as stated in \eqref{eq_cvg0_muEpsN}.
		We thus restrict to $N$ sufficiently large in order to ensure that 
		\begin{equation}
			\mu(\cH^N[\delta]) \le \delta.
			\label{eq_prop_muHne2}
		\end{equation}
		On the other hand, as a consequence of Step 3, see Lemma~\ref{lem_incl_Hn}, we obtain 
		\begin{multline}
			\mu(\cH^N[\delta]^c\cap \cH^N[\theta\,\delta]^\eta)
			\\	\le \mu\Big(\Big\{x\in \bX;\, 
			\nu^N(\cG_x[\delta, \eta])\ge \frac{\delta}{2}
			\Big\}\Big)
			+ \mu\Big(\Big\{x\in \bX;\, 
			\nu(\cG_x[\delta, \eta])\ge \frac{\delta}{2}
			\Big\}\Big).
			\label{eq_prop_hNd}
		\end{multline}
		For the next upper-bound, 
		valid for $\eta$ sufficiently small then $N$ sufficiently large,
		we recall 
		\eqref{eq_prop_nucGx},
		\eqref{eq_prop_nuNcGx}, 
		\eqref{eq_prop_Hde},\eqref{eq_prop_muHne2},
		\eqref{eq_prop_hNd} and get 
		\begin{equation*}
			\mu(\cH^N[\theta\,\delta]^\eta)
			\le 3 \delta.
		\end{equation*}
		It remains to combine this result with \eqref{eq_prop_muNeN} and \eqref{eq_prop_eta} to conclude the proof that $(\langle \mu^N, \eps^N\rangle)$
		tends to zero as $N$ tends to infinity,
		since $\delta$ can be taken arbitrarily small.
		This concludes the proof of Proposition~\ref{prop_cvg_mu0}
		in the particular case where the sequences $(\mu^N)$ and $(\nu^N)$ are deterministic.

	\subsubsection*{Step 5: Proof of Proposition~\ref{prop_cvg_mu0}
	in the general case where $(\mu^N)$ and $(\nu^N)$ are random.}
\hfill\\
For this final step, we no longer require the sequences $(\mu^N)$ and $(\nu^N)$
to be a priori deterministic, 
though our approach consists in referring to this convenient situation. 
		We exploit \cite[Lemma 4.2]{Ka02}
		to relate the convergence of probability to a.s. convergence of sequence extractions.
		Let $(N[k])_{k\ge 1}\in \mathbb{N}^\mathbb{N}$ be an increasing sequence.
		Since $\mu^{N}$ and $\nu^N$
		converge in probability, 
		we can extract a subsequence $(\check N[\ell])_{\ell\ge 1} = (N[K[\ell]])_{\ell\ge 1}$
		from this sequence 
		such that $(\check\mu^{\ell}) = (\mu^{\check N[\ell]})$ 
		and $(\check\nu^{\ell}) = (\nu^{\check N[\ell]})$ 
		converge a.s. respectively to  $\mu$ and $\nu$.
		On this event of probability 1,
		we deduce from Step 4
		that $\langle \check\mu^{\ell}, \check \eps^\ell\rangle$
		converges to zero as $\ell$ tends to infinity.
		Since the convergence of such an extraction
		of the sequence $(\langle \mu^N, \eps^N\rangle)$
		holds whatever the initial extraction,
		this concludes the proof of Proposition  \ref{prop_cvg_mu0}
		in that $(\langle \mu^N, \eps^N\rangle)$ converges in probability to 0.
	\end{proof}
	\medskip
	{\bf Conflict of interest} The authors declare that they have no conflict of interest.
	
	{\bf Funding declaration} This research was not supported by any specific funding.

	\section*{Acknowledgement} 
	We thank the reviewer for the helpful comments that have led to substantial
improvements of the exposition of the paper.

	\bibliographystyle{abbrv}
	\bibliography{GVI_S_filter.bib}

\end{document}